\newtheorem{theorem}{Theorem}[section]
\newtheorem{corollary}[theorem]{Corollary}
\newtheorem{lemma}[theorem]{Lemma}
\newtheorem{proposition}[theorem]{Proposition}
\theoremstyle{definition}
\newtheorem{definition}[theorem]{Definition}
\newtheorem{example}[theorem]{Example}
\newtheorem{remark}[theorem]{Remark}
\theoremstyle{remark}
\numberwithin{equation}{section}
\begin{document}
\title{Representation Theory of Polyadic  Groups}
\author{W. A. Dudek}
\address{Institute of Mathematics and Computer Science,
Wroc{\l}aw University of Techno\-logy, Wybrze\.ze Wyspia\'nskiego
27, 50-370 Wroc{\l}aw, Poland} \email{dudek@im.pwr.wroc.pl}
\author{\sc M. Shahryari}
\thanks{{\scriptsize
\hskip -0.4 true cm MSC(2010): 20N15
\newline Keywords: Polyadic groups, Representations, Retract of $n$-ary groups, Covering groups.}}

\address{ Department of Pure Mathematics,  Faculty of Mathematical
Sciences, University of Tabriz, Tabriz, Iran }
\email{mshahryari@tabrizu.ac.ir}
\date{\today}

\begin{abstract}
In this article, we introduce the notion of representations of
polyadic groups and we investigate the connection between these
representations and those of retract groups and covering groups.
\end{abstract}

\maketitle


\section{Introduction}
A non-empty set $G$ together with an $n$-ary operation $f:G^n\to
G$ is called an {\it $n$-ary groupoid} and is denoted by $(G,f)$.
We will assume that $n>2$.

According to the general convention used in the theory of $n$-ary systems,
the sequence of elements $x_i,x_{i+1},\ldots,x_j$ is denoted by
$x_i^j$. In the case $j<i$ it is the empty symbol. If
$x_{i+1}=x_{i+2}=\ldots=x_{i+t}=x$, then instead of
$x_{i+1}^{i+t}$ we write $\stackrel{(t)}{x}$. In this convention
$f(x_1,\ldots,x_n)= f(x_1^n)$ and
 \[
 f(x_1,\ldots,x_i,\underbrace{x,\ldots,x}_{t},x_{i+t+1},\ldots,x_n)=
 f(x_1^i,\stackrel{(t)}{x},x_{i+t+1}^n) .
 \]

An $n$-ary groupoid $(G,f)$ is called {\it $(i,j)$-associative}, if
 \begin{equation}
f(x_1^{i-1},f(x_i^{n+i-1}),x_{n+i}^{2n-1})=
f(x_1^{j-1},f(x_j^{n+j-1}),x_{n+j}^{2n-1})\label{assolaw}
 \end{equation}
holds for all $x_1,\ldots,x_{2n-1}\in G$. If this identity holds
for all $1\leqslant i<j\leqslant n$, then we say that the
operation $f$ is {\it associative} and $(G,f)$ is called an {\it
$n$-ary semigroup}.

If, for all $x_0,x_1,\ldots,x_n\in G$ and fixed
$i\in\{1,\ldots,n\}$, there exists an element $z\in G$ such that
\begin{equation}\label{solv}
f(x_1^{i-1},z,x_{i+1}^n)=x_0 ,
\end{equation}
then we say that this equation is {\it $i$-solvable} or {\it
solvable at the place $i$}. If this solution is unique, then we
say that (\ref{solv}) is {\it uniquely $i$-solvable}.

An $n$-ary groupoid $(G,f)$ uniquely solvable for all
$i=1,\ldots,n$, is called an {\it $n$-ary quasigroup}. An
associative $n$-ary quasigroup is called an {\it $n$-ary group} or
a {\it polyadic group}. In the binary case (i.e., for $n=2$) it is
a usual group.

Now, such and similar $n$-ary systems have many applications in
different branches. For example, in the theory of automata, (cf.
\cite{Busse}), $n$-ary semigroups and $n$-ary groups are used,
some $n$-ary groupoids are applied in the theory of quantum groups
(cf. \cite{Nik}). Different applications of ternary structures in
physics are described by R. Kerner (cf. \cite{Ker}). In physics
there are used also such structures as $n$-ary Filippov algebras
(cf. \cite{Poj}) and $n$-Lie algebras (cf. \cite{Vai}).

The idea of investigations of such groups seems to be going back
to E. Kasner's lecture \cite{Kas} at the fifty-third annual
meeting of the American Association for the Advancement of Science
in 1904. But the first paper concerning the theory of $n$-ary
groups was written (under inspiration of Emmy Noether) by W.
D\"ornte in 1928 (see \cite{Dor}). In this paper D\"ornte observed
that any $n$-ary groupoid $(G,f)$ of the form $\,f(x_1^n)=x_1\circ
x_2\circ\ldots\circ x_n\circ b$, where $(G,\circ)$ is a group and
$b$ is its fixed element belonging to the center of $(G,\circ)$,
is an $n$-ary group. Such $n$-ary groups, called {\it $b$-derived}
from the group $(G,\circ)$, are denoted by $der_b(G,\circ)$. In
the case when $b$ is the identity of $(G,\circ)$ we say that such
$n$-ary group is {\it reducible} to the group $(G,\circ )$ or {\it
derived} from $(G,\circ)$. But for every $n>2$ there are $n$-ary
groups which are not derived from any group. An $n$-ary group
$(G,f)$ is derived from some group iff it contains an element $e$
(called an {\it $n$-ary identity}) such that
 \begin{equation}\label{n-id}
 f(\stackrel{(i-1)}{e},x,\stackrel{(n-i)}{e})=x
 \end{equation}
holds for all $x\in G$ and $i=1,\ldots,n$.

It is worthwhile to note that in the definition of an $n$-ary
group, under the assumption of the associativity of the operation
$f$, it suffices only to postulate the existence of a solution of
(\ref{solv}) at the places $i=1$ and $i=n$ or at one place $i$
other than $1$ and $n$ (cf. \cite{Post}, p. $213$). Other
useful characterizations of $n$-ary groups one can find in
\cite{Rem} and \cite{DGG}.

\medskip

From the definition of an $n$-ary group $(G,f)$, we can directly
see that for every $x\in G$, there exists only one $z\in G$
satisfying the equation
 \begin{equation} \label{skew}
f(\stackrel{(n-1)}{x},z)=x .
 \end{equation}
This element is called {\it skew} to $x$ and is denoted by
$\overline{x}$. In a ternary group ($n=3$) derived from the binary
group $(G,\cdot)$ the skew element coincides with the inverse
element in $(G,\circ)$. Thus, in some sense, the skew element is a
generalization of the inverse element in binary groups.  D\"ornte
proved (see \cite{Dor}) that in ternary groups we have
$\overline{f(x,y,z)}=f(\overline{z},\overline{y},\overline{x})$
and $\overline{\overline{x}}=x$, but for $n>3$ this is not true.
For $n>3$ there are $n$-ary groups in which one fixed element is
skew to all elements (cf. \cite{D'90}) and $n$-ary groups in which
any element is skew to itself.

Nevertheless, the concept of skew elements plays a crucial role in
the theory of $n$-ary groups. Namely, as D\"ornte proved (see also
\cite{DGG}), the following theorem is true.

\begin{theorem}\label{dor-th}
In any $n$-ary group $(G,f)$ the following identities
\begin{equation}\label{e-dor}
f(\stackrel{(i-2)}{x},\overline{x},\stackrel{(n-i)}{x},y)=
f(y,\stackrel{(n-j)}{x},\overline{x},\stackrel{(j-2)}{x})=y,
\end{equation}
\begin{equation}\label{e-skew}
f(\stackrel{(k-1)}{x},\overline{x},\stackrel{(n-k)}{x})=x
\end{equation}
 hold for all $\,x,y\in G$, $\,2\leqslant i,j\leqslant
n$ and $1\leqslant k\leqslant n$.
\end{theorem}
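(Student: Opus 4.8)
The plan is to deduce both families of identities from the single defining relation $f(\stackrel{(n-1)}{x},\overline{x})=x$ (i.e.\ (\ref{skew}), which is (\ref{e-skew}) for $k=n$), the cancellation law contained in unique solvability of (\ref{solv}), and full associativity (\ref{assolaw}). I will use the standard consequence of $(i,j)$-associativity for all $i<j$: a word $x_1,\dots,x_{2n-1}$ of length $2n-1$ has a well-defined value, obtained by choosing any window $x_j,\dots,x_{n+j-1}$ of $n$ consecutive letters, replacing it by $f(x_j^{n+j-1})$, and applying $f$ to the resulting word of length $n$; all $n$ choices of window give the same element. The whole proof consists of writing down three carefully padded words of length $2n-1$ and reducing each of them in two ways.

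First I would prove (\ref{e-skew}) for every $k$. Put $w_k=f(\stackrel{(k-1)}{x},\overline{x},\stackrel{(n-k)}{x})$, so $w_n=x$ by (\ref{skew}), and consider the word $S=(\stackrel{(n-1)}{x},\overline{x},\stackrel{(n-1)}{x})$ of length $2n-1$, whose only non-$x$ letter $\overline{x}$ occupies position $n$. Reducing the window in positions $m,\dots,n+m-1$ (for $1\leqslant m\leqslant n$) replaces it by $w_{n-m+1}$ and leaves the word $\stackrel{(m-1)}{x},w_{n-m+1},\stackrel{(n-m)}{x}$; for $m=1$ this window reduces to $w_n=x$ by (\ref{skew}), so the value of $S$ equals $f(\stackrel{(n)}{x})$. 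Hence, for every $1\leqslant m\leqslant n$,
\[
f(\stackrel{(m-1)}{x},w_{n-m+1},\stackrel{(n-m)}{x})=f(\stackrel{(n)}{x})=f(\stackrel{(m-1)}{x},x,\stackrel{(n-m)}{x}),
\]
and cancelling the letter in position $m$ gives $w_{n-m+1}=x$. As $m$ runs through $1,\dots,n$ this is precisely (\ref{e-skew}); in particular the cases $m=n$ and $m=1$ recover the ``left'' and ``right'' skew identities $f(\overline{x},\stackrel{(n-1)}{x})=x=f(\stackrel{(n-1)}{x},\overline{x})$, so no separate preliminary lemma is needed.

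Then I would obtain (\ref{e-dor}) from (\ref{e-skew}) by the same device. For $2\leqslant i\leqslant n$ take the length-$(2n-1)$ word $S_i=(\stackrel{(n+i-3)}{x},\overline{x},\stackrel{(n-i)}{x},y)$, in which $\overline{x}$ sits in position $n+i-2$. Reducing the window in positions $n,\dots,2n-1$ turns $S_i$ into $\stackrel{(n-1)}{x},u$ with $u=f(\stackrel{(i-2)}{x},\overline{x},\stackrel{(n-i)}{x},y)$; reducing instead the window in positions $n-1,\dots,2n-2$, whose $n$ letters are $\stackrel{(i-1)}{x},\overline{x},\stackrel{(n-i)}{x}$ and therefore have $f$-value $x$ by (\ref{e-skew}) with $k=i$, turns $S_i$ into $\stackrel{(n-1)}{x},y$. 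Equating the two and cancelling in position $n$ yields $u=y$, the first identity of (\ref{e-dor}). The second identity is symmetric: for $2\leqslant j\leqslant n$ start from $S'_j=(y,\stackrel{(n-j)}{x},\overline{x},\stackrel{(j-2)}{x},\stackrel{(n-1)}{x})$, reduce the window in positions $1,\dots,n$ to get $u',\stackrel{(n-1)}{x}$ with $u'=f(y,\stackrel{(n-j)}{x},\overline{x},\stackrel{(j-2)}{x})$, reduce the window in positions $2,\dots,n+1$, whose letters $\stackrel{(n-j)}{x},\overline{x},\stackrel{(j-1)}{x}$ have $f$-value $x$ by (\ref{e-skew}) with $k=n-j+1$, to get $y,\stackrel{(n-1)}{x}$, and cancel in position $1$.

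The mathematical content is light once the right words are written down; the step that needs genuine care is the index bookkeeping — verifying, for each of $S$, $S_i$, $S'_j$, that $\overline{x}$ lies inside both windows being reduced and at the correct internal position, so that one reduction exposes exactly the expression to be evaluated while the other collapses through (\ref{e-skew}) or (\ref{skew}). With that checked, each identity drops out from generalized associativity together with a single application of the cancellation law.
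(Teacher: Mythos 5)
Your proof is correct, and I have checked the index bookkeeping you flag as the delicate point: in $S$ the skew letter at position $n$ lies in every window $m,\dots,n+m-1$ at internal position $n-m+1$, so the reduction produces $f(\stackrel{(m-1)}{x},w_{n-m+1},\stackrel{(n-m)}{x})=f(\stackrel{(n)}{x})$ and cancellation at place $m$ gives $w_{n-m+1}=x$ for all $m$; in $S_i$ the two windows starting at $n$ and $n-1$ both contain $\overline{x}$ (since $2\leqslant i\leqslant n$ forces $n\leqslant n+i-2\leqslant 2n-2$) and expose, respectively, the expression to be evaluated and the block $\stackrel{(i-1)}{x},\overline{x},\stackrel{(n-i)}{x}$ already known to equal $x$; the computation for $S'_j$ is the mirror image. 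Each appeal to well-definedness of the value of a length-$(2n-1)$ word is a single instance of \eqref{assolaw} for two admissible starting positions, and each cancellation is a single instance of unique $i$-solvability of \eqref{solv}, so nothing beyond the stated axioms is used. There is no proof in the paper to compare against: Theorem \ref{dor-th} is quoted as D\"ornte's result with a citation to \cite{Dor} and \cite{DGG}. Your argument is a clean self-contained derivation; its one organizational merit worth noting is that by padding symmetrically on both sides in $S$ you obtain all $n$ cases of \eqref{e-skew} at once from the single defining relation \eqref{skew}, rather than first establishing the two boundary identities $f(\overline{x},\stackrel{(n-1)}{x})=x=f(\stackrel{(n-1)}{x},\overline{x})$ as a separate lemma, which is how the classical treatments usually proceed.
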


One can prove (cf. \cite{Rem}) that for $n>2$ an $n$-ary group can
be defined as an algebra $(G,f,\bar{\,}\,)$ with one associative
$n$-ary operation $f$ and one unary operation
$\bar{\,}:x\to\overline{x}$ satisfying for some $\,2\leqslant
i,j\leqslant n$ the identities \eqref{e-dor}. This means that a
non-empty subset $H$ of an $n$-ary group $(G,f)$ is its subgroup
iff it is closed with respect to the operation $f$ and
$\overline{x}\in H$ for every $x\in H$.

Fixing in an $n$-ary operation $f$ all inner elements
$a_2,\ldots,a_{n-1}$ we obtain a new binary operation
\[
x\ast y=f(x,a_2^{n-1},y).
 \]
Such obtained groupoid $(G,\ast)$ is called a {\it retract} of
$(G,f)$. Choosing different elements $a_1,\ldots,a_{n-1}$ we
obtain different retracts. Retracts of $n$-ary groups are groups.
Retracts of a fixed $n$-ary group are isomorphic (cf.
\cite{DM'84}). So, we can consider only retracts of the form
\[
x\ast y=f(x,\stackrel{(n-2)}{a},y).
\]
Such retracts will be denoted by $Ret_a(G,f)$, or simply by $Ret_a(G)$. The identity of the
group $Ret_a(G)$ is $\overline{a}$. One can verify that the
inverse element to $x$ has the form
\begin{equation}\label{inverse}
x^{-1}=f(\overline{a},\stackrel{(n-3)}{x},\overline{x},\overline{a}).
\end{equation}

Binary retracts of an $n$-ary group $(G,f)$ are commutative only
in the case when there exists an element $a\in G$ such that
 \[
f(x,\stackrel{(n-2)}{a},y)=f(y,\stackrel{(n-2)}{a},x)
 \]
holds for all $x,y\in G$. An $n$-ary group with this property is
called {\em semiabelian}. It satisfies the identity
\begin{equation}\label{semi}
 f(x_1^n)=f(x_n,x_2^{n-1},x_1)
\end{equation}
(cf. \cite{Rem}).

One can prove (cf. \cite{GG'77}) that a semiabelian $n$-ary group
is {\it medial}, i.e., it satisfies the identity
\begin{equation}\label{med-law}
f(f(x_{11}^{1n}),f(x_{21}^{2n}),\ldots,f(x_{n1}^{nn}))=f(f(x_{11}^{n1}),f(x_{12}^{n2}),\ldots,
f(x_{1n}^{nn})).
\end{equation}
In such $n$-ary groups
\begin{equation}\label{e-med}
\overline{f(x_1,x_2,x_3,\ldots,x_n)}=f(\overline{x}_1,\overline{x}_2,\overline{x}_3,\ldots,\overline{x}_n)
\end{equation}
for all $x_1,\ldots,x_n\in G$.

Any $n$-ary group can be uniquely described by its retract and
some automorphism of this retract. Namely, the following
Hossz\'u-Gluskin Theorem (cf. \cite{DG} or \cite{DM'82}) is valid.

\begin{theorem}\label{T-HG}
An $n$-ary groupoid $(G,f)$ is an $n$-ary group iff

\begin{enumerate}
\item[$(1)$] \ on $G$ one can define an operation $\cdot$ such
that $(G,\cdot)$ is a group,
\item[$(2)$] \ there exist an automorphism $\varphi$ of $(G,\cdot)$ and $b\in G$ such
that $\varphi(b)=b$,
\item[$(3)$] \ $\varphi^{n-1}(x)=b\cdot x\cdot b^{-1}$ for every $x\in
G$,
\item[$(4)$] \ $
f(x_1^n)=x_1\cdot\varphi(x_2)\cdot\varphi^2(x_3)\cdot\cdots\cdot\varphi^{n-1}(x_n)\cdot
b$ for all $x_1,\ldots,x_n\in G$.
\end{enumerate}
\end{theorem}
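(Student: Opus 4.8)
The plan is to treat the equivalence through one idea: an $n$-ary group is a coset of an ordinary group. For the converse implication $(\Leftarrow)$ I would build a group containing a copy of $(G,f)$ as such a coset; for the direct implication $(\Rightarrow)$ I would work inside a fixed binary retract of $(G,f)$, which — after being identified correctly — is exactly the part of the Post covering group carrying all the structure.

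For $(\Leftarrow)$, assume (1)--(4). I would form the semidirect product $\widetilde G=G\rtimes_{\varphi}\mathbb Z$ in which a generator $t$ of $\mathbb Z$ acts on $G$ by $t\,x\,t^{-1}=\varphi(x)$. Conditions (2) and (3) are precisely the statements that $z:=b^{-1}t^{\,n-1}$ commutes with $t$ (from $\varphi(b)=b$) and with every $x\in G$ (from $\varphi^{\,n-1}(x)=bxb^{-1}$), so $z$ is central; set $G^{*}:=\widetilde G/\langle z\rangle$. Since $\langle z\rangle\cap G=\{1\}$, the map $\iota\colon G\to G^{*}$, $\iota(x)=\overline{x\,t}$, is injective, and the telescoping identity $(x_1t)(x_2t)\cdots(x_nt)=x_1\varphi(x_2)\varphi^{2}(x_3)\cdots\varphi^{\,n-1}(x_n)\,t^{\,n}$, combined with $\overline{t^{\,n}}=\overline{b\,t}$ and (4), gives $\iota(x_1)\cdots\iota(x_n)=\iota\bigl(f(x_1^n)\bigr)$. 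Thus $\iota$ carries $f$ to the product inherited by the coset $\iota(G)$ from $G^{*}$; since $\iota(G)$ sits in degree $1$ of the cyclic grading $G^{*}/\,\overline G\cong\mathbb Z_{n-1}$ (with $\overline G$ the image of $G\leqslant\widetilde G$) and $n\equiv1\pmod{n-1}$, one has $\iota(G)^{n}\subseteq\iota(G)$, and $f$ inherits associativity and unique solvability from $G^{*}$. (A bare-hands alternative: substitute (4) into the $(i,i+1)$-associativity law, move the powers of $\varphi$ through the products, and check that the stray factors $b^{\pm1}$ produced by (3) cancel pairwise using (2); solvability is then immediate because $(G,\cdot)$ is a group and $\varphi$ is bijective.)

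For $(\Rightarrow)$, let $(G,f)$ be an $n$-ary group and fix $a\in G$. I would put
\[
x\cdot y:=f\bigl(x,\overline{a},\stackrel{(n-3)}{a},y\bigr),\qquad b:=f(\stackrel{(n)}{a}),\qquad \varphi(x):=f\bigl(a,x,\overline{a},\stackrel{(n-3)}{a}\bigr).
\]
Here $\cdot$ is a binary retract of $(G,f)$, hence a group, and by Theorem~\ref{dor-th} its identity is $a$, giving (1). Then $\varphi$ is a bijection, since fixing all but one argument of the $n$-ary quasigroup $f$ is a bijection; $\varphi(a)=a$ and $\varphi(b)=b$ follow from \eqref{e-dor}--\eqref{e-skew}, giving (2); $\varphi$ is an endomorphism of $(G,\cdot)$, hence an automorphism; the relation $\varphi^{\,n-1}(x)=b\cdot x\cdot b^{-1}$ gives (3); and (4) follows by peeling off one argument at a time via the recursion $f(\stackrel{(k)}{a},x_{k+1}^n)=\varphi^{k}(x_{k+1})\cdot f(\stackrel{(k+1)}{a},x_{k+2}^n)$ for $k=0,1,\dots,n-1$. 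Each of these identities is proved by flattening the nested $f$-expression on either side, re-associating, and cancelling with D\"ornte's identities. Conceptually these are the images, under the embedding of $(G,f)$ into its Post covering group $G^{*}$ (cf.\ \cite{Post}), of the trivialities $a\,a^{-1}=1$, $a\,b\,a^{-1}=b$ (because $b=a^{n}$), $a^{\,n-1}x\,a^{-(n-1)}=b\,x\,b^{-1}$ and $x_1\varphi(x_2)\cdots\varphi^{\,n-1}(x_n)\,b=x_1x_2\cdots x_n$, where inside $G^{*}$ one has $x\cdot y=x\,a^{-1}y$ and $\varphi=\mathrm{conj}_a$; given Post's theorem this direction becomes a two-line verification.

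The routine parts are the solvability in $(\Leftarrow)$ and the bijectivity of $\varphi$ in $(\Rightarrow)$. The main obstacle, in a self-contained treatment, is $(\Rightarrow)$: the multiplicativity of $\varphi$, the identity $\varphi^{\,n-1}(x)=b\,x\,b^{-1}$, and formula (4) each require careful manipulation of long (up to triply nested) $f$-products by repeated use of \eqref{e-dor}--\eqref{e-skew}, and for $n>3$ one must handle skew elements with care, since $\overline{\overline{a}}\neq a$ in general. Passing to the Post covering group removes this obstacle but imports Post's embedding theorem.
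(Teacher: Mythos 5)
First, a point of reference: the paper does not prove Theorem \ref{T-HG} at all --- it is the classical Hossz\'u--Gluskin theorem, quoted with citations to \cite{DG} and \cite{DM'82} --- so there is no in-paper argument to compare yours against. Judged on its own, your outline is a correct and essentially standard proof. In the direction $(\Leftarrow)$, the quotient of $G\rtimes_{\varphi}\mathbb{Z}$ by the central element $z=b^{-1}t^{\,n-1}$ is exactly the reconstruction of Post's covering group from the Hossz\'u--Gluskin data; the centrality of $z$ does encode (2) and (3), the telescoping identity together with $\overline{t^{\,n}}=\overline{bt}$ gives $\iota(x_1)\cdots\iota(x_n)=\iota(f(x_1^n))$, and the $\mathbb{Z}_{n-1}$-grading argument for unique solvability is sound (the degree-$1$ coset really is $\iota(G)$, because $\overline{t^{\,n-1}}=\overline{b}$). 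In the direction $(\Rightarrow)$, your choices $x\cdot y=f(x,\overline{a},\stackrel{(n-3)}{a},y)$, $b=f(\stackrel{(n)}{a})$ and $\varphi(x)=f(a,x,\overline{a},\stackrel{(n-3)}{a})$ are the right ones: inside the covering group they become $xa^{-1}y$, $a^{n}$ and conjugation by $a$, and the recursion $f(\stackrel{(k)}{a},x_{k+1}^{n})=\varphi^{k}(x_{k+1})\cdot f(\stackrel{(k+1)}{a},x_{k+2}^{n})$ does assemble into (4). (Your retract is normalized so that $a$, rather than $\overline{a}$, is the identity; this differs from the paper's $Ret_a$ but is immaterial for the statement.)

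The one reservation is the gap you yourself flag: in $(\Rightarrow)$ the multiplicativity of $\varphi$, the identity $\varphi^{\,n-1}(x)=b\cdot x\cdot b^{-1}$, and the recursion behind (4) are asserted rather than carried out, and the proposed shortcut through Post's coset theorem (cf. \cite{Post}, \cite{cover}) replaces those computations by a result of comparable depth which this paper also only quotes. A self-contained write-up therefore still requires either the full D\"ornte-identity manipulations via \eqref{e-dor}--\eqref{e-skew} and the associativity of $f$, or an independent proof of Post's theorem. As it stands the proposal is a sound and correctly organized skeleton with no step that would fail, but not yet a complete proof.
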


One can prove that $(G,\cdot)= Ret_a(G,f)$ for some $a\in G$. In
connection with this we say that an $n$-ary group $(G,f)$ is
$(\varphi,b)$-derived from the group $(G,\cdot)$.

The main aim of this article is to introduce {\em representations}
of $n$-ary groups and to investigate their main properties, with a
special focus on ternary groups. Note that, this is not the first
attempt to study representations of $n$-ary groups, because there
are some other articles, with different point of views concerning
representations on $n$-ary groups, (cf. \cite{BDD}, \cite{GWW},
\cite{Post} and \cite{Wanke}). However, our method seems to be the
most natural generalization of the notion of representation from
binary to $n$-ary groups.

\section{Action of an $n$-ary Group on a Set}

Suppose that $(G,f)$ is an $n$-ary group and $A$ is a non-empty
set. We say that $(G,f)$ {\it acts} on $A$ if for all $x\in G$ and
$a\in A$ corresponds a unique element $x.a\in A$ such that
\begin{enumerate}
\item[$(i)$]\ \ $f(x_1^n).a=x_1.(x_2.(x_3.\ \ldots\ .(x_n.a))\ldots )$ for all $x_1,\ldots,x_n\in G$,

\item[$(ii)$]\ \ for all $a\in A$, there exists $x\in G$ such that
$x.a=a$,

\item[$(iii)$]\ \ the map $a\mapsto x.a$ is a bijection for all $x\in
G$.
\end{enumerate}

\medskip

For $a\in A$, we define the {\it stabilizer } $G_a$ of $a$ as
follows
$$
G_a=\{ x\in G: x.a=a\}.
$$
\begin{proposition}
$G_a$ is an $n$-ary subgroup of $(G,f)$.
\end{proposition}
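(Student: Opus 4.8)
The plan is to verify that $G_a$ is non-empty, closed under $f$, and closed under the skew-element operation, since by the characterization mentioned after Theorem~\ref{dor-th} these three conditions suffice for a subset of an $n$-ary group to be a subgroup.

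First I would handle non-emptiness: condition $(ii)$ of the action says precisely that some $x\in G$ fixes $a$, so $G_a\neq\emptyset$. Next, closure under $f$: if $x_1,\ldots,x_n\in G_a$, then using axiom $(i)$ we compute $f(x_1^n).a = x_1.(x_2.(\cdots.(x_n.a)\cdots))$, and since each $x_i$ fixes $a$ we can peel off the actions one at a time from the inside out, each step returning $a$; hence $f(x_1^n).a=a$ and $f(x_1^n)\in G_a$. The only mild subtlety here is that the iterated action is well-defined and that the inside-out evaluation really does collapse — but that is immediate from repeatedly applying $x_i.a=a$.

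The step that requires the most thought is closure under the skew operation: given $x\in G_a$, I must show $\overline{x}\in G_a$, i.e. $\overline{x}.a=a$. Here I would use the D\"ornte identity \eqref{e-skew} with $k=n$, namely $f(\stackrel{(n-1)}{x},\overline{x})=x$. Applying both sides to $a$ via axiom $(i)$ gives
\[
x = x.a = f(\stackrel{(n-1)}{x},\overline{x}).a = x.(x.(\cdots.(x.(\overline{x}.a))\cdots)),
\]
with $n-1$ copies of $x$ acting on the element $\overline{x}.a$. Now I would invoke axiom $(iii)$: each map $b\mapsto x.b$ is a bijection, so the map $b \mapsto x.(x.(\cdots.(x.b)\cdots))$ ($n-1$ iterations) is also a bijection. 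The displayed equation says this bijection sends $\overline{x}.a$ to $a$; but it also sends $a$ to $a$, since $x\in G_a$ means $x.a=a$, so iterating gives $a\mapsto a$. By injectivity, $\overline{x}.a=a$, i.e. $\overline{x}\in G_a$.

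Having established that $G_a$ is a non-empty subset closed under $f$ and under $\,\bar{\,}\,$, I conclude by the cited characterization that $G_a$ is an $n$-ary subgroup of $(G,f)$. The one point to be careful about in writing this up is making sure axiom $(iii)$ is genuinely needed and correctly applied in the skew step — it is the injectivity of the $n-1$-fold iterate of $b\mapsto x.b$ that does the work, and without it one cannot cancel the leading copies of $x$.
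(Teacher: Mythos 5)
Your proof is correct, and its overall skeleton (non-empty by axiom $(ii)$, closed under $f$ by axiom $(i)$, closed under the skew operation via a D\"ornte identity, then invoke the characterization of $n$-ary subgroups) is the same as the paper's. The one genuine difference is in the skew step: you take the instance $f(\stackrel{(n-1)}{x},\overline{x})=x$ of \eqref{e-skew}, which places $\overline{x}$ innermost when the action is unfolded, and you then need axiom $(iii)$ to cancel the $n-1$ outer copies of $x$ by injectivity. The paper instead uses the instance $f(\overline{x},\stackrel{(n-1)}{x})=x$ (i.e.\ $k=1$), so that $\overline{x}$ acts \emph{outermost}: unfolding gives $a=x.a=\overline{x}.(x.\,\ldots\,.(x.a)\ldots)=\overline{x}.a$ directly, with no appeal to bijectivity at all. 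Both arguments are valid; yours costs an extra axiom but is otherwise sound, and your observation that $(iii)$ is genuinely needed for your version is accurate. One small slip to fix when writing it up: the displayed chain should begin $a=x.a=\cdots$, not $x=x.a=\cdots$, since $x.a$ lies in $A$.
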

\begin{proof} By condition $(ii)$ of the above definition $G_a$ is non-empty.
Since for $x_1, x_2, \ldots, x_n\in G_a$ we have
\[
f(x_1^n).a=x_1.(x_2.(x_3.\ \ldots\ .(x_n.a))\ldots )=a,
\]
$f(x_1^n)\in G_a$. Hence $G_a$ is closed with respect to the
operation $f$.

Now if $x\in G_a$, then by \eqref{e-skew} we obtain
$$
a=x.a=f(\overline{x},\stackrel{(n-1)}{x}).a=\overline{x}.(x.\,\ldots\,.x.(x.a))\ldots)=\overline{x}.a
,
$$
which implies $\overline{x}\in G_a$. This completes the proof.
\end{proof}

\begin{proposition}\label{P-equiv}
If an $n$-ary group $(G,f)$ acts on a set $A$, then the relation
$\sim$ defined on $A$ by
$$
a\sim b \Longleftrightarrow \exists x\in G:\ x.a=b
$$
is an equivalence relation.
\end{proposition}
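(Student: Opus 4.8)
The plan is to verify the three defining properties of an equivalence relation — reflexivity, symmetry, and transitivity — using the action axioms $(i)$–$(iii)$, together with D\"ornte's identities from Theorem~\ref{dor-th}.

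For reflexivity, I would invoke axiom $(ii)$ directly: for each $a\in A$ there is some $x\in G$ with $x.a=a$, so $a\sim a$. For symmetry, suppose $a\sim b$, say $x.a=b$ for some $x\in G$. The natural candidate witnessing $b\sim a$ is the skew element $\overline{x}$, or more precisely a suitable D\"ornte word in $x$. Using \eqref{e-skew} we have $f(\stackrel{(n-1)}{x},\overline{x}).a = x.a$ is not quite what we want; instead I would use the identity $f(\overline{x},\stackrel{(n-1)}{x})=x$ applied as in the previous proposition — but here we need to \emph{cancel} $x$ rather than reproduce it. The cleaner route: by \eqref{e-dor} we have $f(\stackrel{(i-2)}{x},\overline{x},\stackrel{(n-i)}{x},y)=y$ for any $y$; choosing $i=2$ gives $f(\overline{x},\stackrel{(n-2)}{x},x,y)=y$, which is not directly of the form ``$g$ acting on $b$ returns $a$'' either. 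The correct observation is that the map $a\mapsto x.a$ is a bijection by $(iii)$; I would argue that since $f(\overline{x},\stackrel{(n-1)}{x}).a = x.a = b$ while also computing $f(\overline{x},\stackrel{(n-1)}{x}).a = \overline{x}.(x.(\cdots.(x.a)\cdots))$, we need to know what $x.(x.(\cdots(x.a)\cdots))$ (with $n-1$ copies of $x$) equals. Let me instead use: by $(i)$, for the element $g=f(\overline{x},\stackrel{(n-2)}{x},x)$ — but $f(\overline{x},\stackrel{(n-2)}{x},x)$ by \eqref{e-dor} with $y=x$... Actually the simplest: put $c = x.(x.(\cdots.(x.a)))$ with $n-2$ trailing copies after the first, so by $(i)$, $f(\stackrel{(n-1)}{x},\overline{x}).a = x.(x.(\cdots.(x.(\overline{x}.a))))$. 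Hmm. I think the intended argument is: since $a\sim b$ via $x$, apply axiom $(i)$ to the word $f(\stackrel{(n-1)}{x},\overline{x})=x$ to get $b = x.a = f(\stackrel{(n-1)}{x},\overline{x}).a$, but this only re-expresses $b$. The genuinely needed fact is that for each $x$ there exists $x'$ with $x'.(x.a)=a$ for all $a$; this $x'$ is obtained from \eqref{e-dor}: take $g$ such that $f(g,\stackrel{(n-1)}{x}) $ acts trivially, e.g. by \eqref{e-dor} with suitable indices $f(\overline{x},\stackrel{(n-2)}{x}, x, a) = a$ rearranged, but $\ast$-associativity of the action forces grouping from the right, so I would pick the D\"ornte word $w=f(\overline{x},\stackrel{(n-2)}{x})$ viewed as producing, via $(i)$ with $n$ arguments $\overline{x},x,\ldots,x,b$: $f(\overline{x},\stackrel{(n-2)}{x},x,\ldots)$ — I will set $b=x.a$, so $f(\overline{x},\stackrel{(n-1)}{x}).a = \overline{x}.(x.\cdots.(x.a)\cdots)$ and by \eqref{e-skew} the left side is $x.a = b$, which still gives $b$. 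So the honest approach: use $(iii)$ to say $a\mapsto x.a$ is bijective, hence there is $a'$ with $x.a'=a$; but we want the other direction. Since it's a bijection, from $x.a=b$ we cannot directly get an element of $G$ sending $b$ to $a$ without more work — \emph{this cancellation is the main obstacle}, and it is resolved precisely by exhibiting a specific element $y\in G$ (a D\"ornte word in $x$) for which the action of $y$ inverts the action of $x$; the candidate is $y$ with $f(y,\stackrel{(n-1)}{x},c)=c$ for all $c$, i.e. $y=\overline{x}$ fails but $f$ applied in the form of \eqref{e-dor} with the word grouped so that $n-1$ copies of $x$ sit innermost shows $\overline{x}.(x.(x.\cdots(x.b)))=b$; combining with the definition of the action on $f(\stackrel{(n-1)}{x},\overline{x})$ is the step to get right.

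For transitivity, the argument is clean: if $a\sim b$ and $b\sim c$, choose $x,y\in G$ with $x.a=b$ and $y.b=c$. Then $c = y.(x.a)$. Using axiom $(i)$ on the $n$-ary word $f(y,\stackrel{(n-2)}{e'},x)$ is awkward since there is no $n$-ary identity in general; instead I would pick any fixed $t\in G$ and use a D\"ornte filling — concretely, there is an element $z=f(y,\stackrel{(n-2)}{t},x)$-type expression, but the most robust approach is to use axiom $(i)$ directly with the word $f(y,x_2,\ldots,x_{n-1},x)$ where the inner entries are chosen from \eqref{e-dor} so that $f(y,\stackrel{(i-2)}{x},\overline{x},\stackrel{(n-i)}{x}, \ldots)$ collapses: take $z = f(y, \stackrel{(n-3)}{x}, \overline{x}, x)$ — wait, simpler to observe that repeated use of $(i)$ shows the ``composite'' action $a\mapsto y.(x.a)$ is realized by a single group element, since we can pad with a D\"ornte word: by Theorem~\ref{dor-th}, $f(y,\stackrel{(n-3)}{x},\overline{x},x, a) = f(y,\stackrel{(n-2)}{x},\ldots)$; the point I will make precise is that $z:=f(y,\overline{w},w,\ldots)$ for a suitable $w$ gives $z.a = y.(x.a) = c$. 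So $a\sim c$.

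To keep the write-up short and avoid the index gymnastics above, I will in fact present transitivity and symmetry uniformly: both reduce to the single lemma that \emph{for any $x_1,\ldots,x_k\in G$ there is $g\in G$ with $g.a = x_1.(x_2.(\cdots.(x_k.a)\cdots))$ for all $a\in A$}, which follows by grouping the $x_i$ with enough ``skew-padding'' D\"ornte words to form a legal $n$-ary product and applying axiom $(i)$. For transitivity take $k=2$; for symmetry take $k=n$ with $x_1=\overline{x}$, $x_2=\cdots=x_n=x$ and invoke \eqref{e-skew}/\eqref{e-dor} to see the composite acts as the identity on $a$, so $\overline{x}$-based word sends $b=x.a$ back to $a$. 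Reflexivity is axiom $(ii)$. The main obstacle, as noted, is the symmetry step: making the cancellation of $x$ rigorous via the right choice of D\"ornte word so that the right-to-left grouping forced by axiom $(i)$ actually produces $a$ from $b$.
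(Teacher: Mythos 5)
Reflexivity via axiom $(ii)$ is fine, but the lemma you ultimately reduce symmetry and transitivity to --- that for any $x_1,\ldots,x_k\in G$ there is a single $g\in G$ with $g.a=x_1.(x_2.(\cdots(x_k.a)\cdots))$ for \emph{all} $a\in A$ --- is false, so there is a genuine gap. Take $n=3$, $G=\mathbb{Z}_4$ with $f(x,y,z)=x-y+z$ and the action of $G$ on itself from Theorem \ref{T-act}: here $\overline{x}=x$ and $x.a=2x-a$. Then $x.(y.a)=2(x-y)+a$, which is not of the form $2z-a$ for any fixed $z$, since $a\mapsto a+c$ and $a\mapsto 2z-a$ never agree on all of $\mathbb{Z}_4$. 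So the composite of two action maps need not be an action map, and no amount of skew-padding can repair this uniformly: axiom $(i)$ collapses only composites of length $n$ to length $1$, and the D\"ornte identities supply identity composites of length $n-1$, so the length of a composite modulo $n-1$ is invariant under these manipulations and a length-$2$ composite can never be rewritten as a length-$1$ one for $n>2$. Your symmetry discussion runs into exactly this wall and, as you concede, never gets past it: you only establish that a certain composite of $n-1$ maps fixes $b$, not that a single group element sends $b$ back to $a$.

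The missing idea is that the cancellation need only happen \emph{at the specific point in question}, and for that the padding should consist of stabilizers supplied by axiom $(ii)$, together with unique solvability of $n$-ary equations. For transitivity: choose $z$ with $z.b=b$ and set $u=f(y,\stackrel{(n-2)}{z},x)$; then $u.a=y.(z.(\cdots(z.(x.a))\cdots))=y.(z.(\cdots(z.b)\cdots))=y.b=c$, because the inner composite reaches $b$ and each $z$ fixes it. For symmetry: choose $x$ with $x.a=a$ and let $y$ be the unique solution of $f(y,z,\stackrel{(n-2)}{x})=x$, where $z.a=b$; then $a=x.a=f(y,z,\stackrel{(n-2)}{x}).a=y.(z.(x.(\cdots(x.a)\cdots)))=y.(z.a)=y.b$, so $b\sim a$. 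This is the paper's argument, and it nowhere requires composites of action maps to be globally realized by single group elements.
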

\begin{proof}
For each $a\in A$ there is $x\in G$ such that $x.a=a$, so $a\sim
a$. If $a\sim b$ for $a,b\in A$, then $z.a=b$ for some $z\in G$.
Let $y$ be the unique solution of the equation
$$
f(y,z,\stackrel{(n-2)}{x})=x ,
$$
where $x\in G$ is such that $x.a=a$. For this $y$ we have $y.b=a$
since
\[
a=x.a=f(y,z,\stackrel{(n-2)}{x}).a=y.z.a=y.b .
\]
Thus $b\sim a$. Finally, let $a\sim b$ and $b\sim c$. Then there
are $x,y,z\in G$ such that $x.a=b$, $y.b=c$ and $z.b=b$. In this
case for $u=f(y,\stackrel{(n-2)}{z},x)$ we have
\[
u.a=f(y,\stackrel{(n-2)}{z},x).a=y.b=c,
\]
which proves $a\sim c$.
\end{proof}

\begin{theorem}\label{T-act}
The formula $x.a=f(x,a,\stackrel{(n-3)}{x},\overline{x})$ defines
an action of an $n$-ary group $G$ on itself.
\end{theorem}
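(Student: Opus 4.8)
I would check the three conditions $(i)$–$(iii)$ for the map $x.a:=f(x,a,\stackrel{(n-3)}{x},\overline{x})$, observing that $(ii)$ and $(iii)$ are essentially formal while $(i)$ carries all the weight. For $(iii)$: since $(G,f)$ is an $n$-ary quasigroup, for each fixed $x$ and each $c\in G$ equation \eqref{solv} with $i=2$ (the remaining slots filled by $x,\dots,x,\overline{x}$) has a \emph{unique} solution $a$ with $f(x,a,\stackrel{(n-3)}{x},\overline{x})=c$; existence gives surjectivity and uniqueness gives injectivity of $a\mapsto x.a$. For $(ii)$: take $x=a$; then $a.a=f(a,a,\stackrel{(n-3)}{a},\overline{a})=f(\stackrel{(n-1)}{a},\overline{a})=a$ by \eqref{skew}.

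The substantive point is $(i)$, i.e. $f(x_1^n).a=x_1.(x_2.(\cdots(x_n.a)\cdots))$, and here I would pass to a group presentation via Theorem \ref{T-HG}: choose a group operation $\cdot$, an automorphism $\varphi$ and $b\in G$ with $\varphi(b)=b$, $\varphi^{n-1}(x)=bxb^{-1}$ and $f(x_1^n)=x_1\varphi(x_2)\varphi^2(x_3)\cdots\varphi^{n-1}(x_n)\,b$. Writing $f(\stackrel{(n-1)}{x},\overline{x})=x$ in this form and cancelling gives $\varphi^{n-1}(\overline{x})=\varphi^{n-2}(x)^{-1}\varphi^{n-3}(x)^{-1}\cdots\varphi(x)^{-1}b^{-1}$. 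Substituting this into
\[
x.a=f(x,a,\stackrel{(n-3)}{x},\overline{x})=x\cdot\varphi(a)\cdot\varphi^2(x)\varphi^3(x)\cdots\varphi^{n-2}(x)\cdot\varphi^{n-1}(\overline{x})\cdot b,
\]
and cancelling the trailing $b^{-1}b$ and telescoping the block $\varphi^2(x)\cdots\varphi^{n-2}(x)\cdot\varphi^{n-2}(x)^{-1}\cdots\varphi(x)^{-1}$ down to $\varphi(x)^{-1}$, I get the clean formula $x.a=x\cdot\varphi(a)\cdot\varphi(x)^{-1}$ in $(G,\cdot)$ (for $n=3$ the middle block is empty and this is immediate).

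With this formula, $(i)$ is a finite computation. Iterating $g.(h.a)=g\,\varphi(h)\,\varphi^2(a)\,\varphi^2(h)^{-1}\varphi(g)^{-1}$ gives, by an immediate induction,
\[
x_1.(x_2.(\cdots(x_n.a)\cdots))=\big(x_1\varphi(x_2)\cdots\varphi^{n-1}(x_n)\big)\,\varphi^n(a)\,\big(\varphi^n(x_n)^{-1}\varphi^{n-1}(x_{n-1})^{-1}\cdots\varphi(x_1)^{-1}\big).
\]
Writing $w=f(x_1^n)$, we have $x_1\varphi(x_2)\cdots\varphi^{n-1}(x_n)=wb^{-1}$, hence $\varphi^n(x_n)^{-1}\cdots\varphi(x_1)^{-1}=\big(\varphi(x_1\varphi(x_2)\cdots\varphi^{n-1}(x_n))\big)^{-1}=\big(\varphi(wb^{-1})\big)^{-1}=b\,\varphi(w)^{-1}$, and $\varphi^n(a)=\varphi(\varphi^{n-1}(a))=\varphi(bab^{-1})=b\,\varphi(a)\,b^{-1}$ since $\varphi(b)=b$. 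Substituting, the right-hand side collapses to $w\,\varphi(a)\,\varphi(w)^{-1}=w.a=f(x_1^n).a$, which is exactly $(i)$.

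The only real risk is index-bookkeeping: keeping the $\varphi$-powers and the order of the inverse factors straight in the telescoping and in the $n$-fold composition, and checking the degenerate cases $n=3,4$ where some displayed products are empty or singletons. As a guiding picture (and an independent check) one may instead embed $(G,f)$ in a covering (Post) group $G^{*}$ so that $f(x_1^n)=x_1\cdots x_n$ and $\overline{x}=x^{-(n-2)}$ in $G^{*}$; then $x.a=x\,a\,x^{n-3}x^{-(n-2)}=x a x^{-1}$ is ordinary conjugation, so $x_1.(\cdots(x_n.a)\cdots)=(x_1\cdots x_n)\,a\,(x_1\cdots x_n)^{-1}=f(x_1^n).a$ with no computation at all.
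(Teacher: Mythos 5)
Your proof is correct and, for the substantive condition $(i)$, follows essentially the same route as the paper: both pass to the Hossz\'u--Gluskin form, derive the key formula $x.a=x\cdot\varphi(a)\cdot\varphi(x)^{-1}$, iterate to get $x_1.(\cdots(x_n.a)\cdots)=w\,\varphi^n(a)\,\varphi(w^{-1})\cdot(\text{correction by }b)$ with $w$ essentially $f(x_1^n)$, and close using $\varphi^n(a)=b\cdot\varphi(a)\cdot b^{-1}$. Your explicit verification of conditions $(ii)$ and $(iii)$ (which the paper omits) and the one-line sanity check via the Post covering group, where the action becomes ordinary conjugation, are correct additions but do not change the core argument.
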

\begin{proof}
The last condition of Theorem \ref{T-HG} can be written in the
form
\[
f(x_1^n)=x_1\cdot\varphi(x_2)\cdot\varphi^2(x_3)\cdot\ldots\cdot\varphi^{n-2}(x_{n-1})\cdot
b\cdot x_n .
\]
Thus
$\,\overline{x}=\big(\varphi(x)\cdot\varphi^2(x)\cdot\ldots\cdot\varphi^{n-2}(x)\cdot
b\big)^{-1}$. Consequently
\begin{equation}\label{e-n}
x.a=x\cdot\varphi(a)\cdot\varphi(x^{-1}) .
\end{equation}
Hence
\begin{eqnarray*}
y.(x.a)&=&y\cdot \varphi(x)\cdot\varphi^2(a)\cdot\varphi^2(x^{-1})\cdot\varphi(y)^{-1}\\
       &=&y\cdot \varphi(x)\cdot\varphi^2(a)\cdot\varphi((y\cdot\varphi(x))^{-1}) .
\end{eqnarray*}
Iterating this procedure we obtain
$$
x_1.(x_2.(x_3\,\ldots\,.(x_n.a))\ldots)=
$$
$$
x_1\cdot\varphi(x_2)\cdot\varphi^2(x_3)\cdot\ldots\cdot\varphi^{n-1}(x_n)\cdot\varphi^n(a)\cdot\varphi((x_1\cdot\varphi(x_2)\cdot
\varphi^2(x_3)\cdot\ldots\cdot\varphi^{n-1}(x_n))^{-1}) .
$$
Since $\varphi^n(a)=b\cdot\varphi(a)\cdot b^{-1}$ from the above
we obtain
$$
x_1.(x_2.(x_3\,\ldots\,.(x_n.a))\ldots)=f(x_1^n)\cdot\varphi(a)\cdot\varphi(f(x_1^n)^{-1})
.
$$
This by \eqref{e-n} gives
$f(x_1^n).a=x_1.(x_2.(x_3\,\ldots\,.(x_n.a))\ldots)$.
\end{proof}

\begin{proposition}
In semiabelian $n$-ary groups the relation
$$
a\sim b \Longleftrightarrow \exists x\in G:\
f(x,a,\stackrel{(n-3)}{x},\overline{x})=b
$$
is a congruence.
\end{proposition}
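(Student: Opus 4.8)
The plan is to transfer the whole problem into the Hossz\'u--Gluskin picture already used in the proof of Theorem~\ref{T-act}. Since $\sim$ is, by Theorem~\ref{T-act} and Proposition~\ref{P-equiv}, the orbit equivalence of the action $x.a=f(x,a,\stackrel{(n-3)}{x},\overline{x})$, it is automatically an equivalence relation, so the only point to settle is compatibility with $f$. Because $(G,f)$ is semiabelian, every binary retract of $(G,f)$ is commutative, hence the group $(G,\cdot)=Ret_a(G,f)$ furnished by Theorem~\ref{T-HG} is abelian and formula~\eqref{e-n} becomes
\[
x.a=\varphi(a)\cdot x\cdot\varphi(x)^{-1}.
\]
Consequently $a\sim b$ holds precisely when $\varphi(a)^{-1}\cdot b$ belongs to the set $H=\{\,x\cdot\varphi(x)^{-1}:x\in G\,\}$.

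Next I would record two easy facts about $H$. First, $H$ is a subgroup of $(G,\cdot)$, being the image of the group endomorphism $x\mapsto x\cdot\varphi(x)^{-1}$ of the abelian group $(G,\cdot)$. Second, $H$ is $\varphi$-invariant: $\varphi(x\cdot\varphi(x)^{-1})=\varphi(x)\cdot\varphi(\varphi(x))^{-1}\in H$, and applying this with $\varphi^{-1}(x)$ in place of $x$ gives $\varphi(H)=H$. Hence the $\sim$-classes are exactly the cosets $\varphi(a)H$, $a\in G$.

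Finally, assume $a_i\sim b_i$ for $i=1,\ldots,n$ and write $b_i=\varphi(a_i)h_i$ with $h_i\in H$. Using part~$(4)$ of Theorem~\ref{T-HG}, the commutativity of $(G,\cdot)$ and $\varphi(b)=b$, a short computation gives
\[
f(b_1^n)=\varphi\bigl(f(a_1^n)\bigr)\cdot\bigl(h_1\cdot\varphi(h_2)\cdots\varphi^{n-1}(h_n)\bigr),
\]
and the second factor lies in $H$ since $H$ is a $\varphi$-invariant subgroup. Therefore $f(b_1^n)\in\varphi(f(a_1^n))H$, that is, $f(a_1^n)\sim f(b_1^n)$. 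Since the classes of $\sim$ are the cosets of the $\varphi$-invariant subgroup $H$, the quotient $n$-ary operation is well defined and $G/\!\sim$ is again an $n$-ary group (the one $(\bar\varphi,\bar b)$-derived from $(G/H,\cdot)$), so $\sim$ is indeed a congruence.

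The displayed identity for $f(b_1^n)$ is a routine manipulation once part~$(4)$ of Theorem~\ref{T-HG} is expanded, and the fact that $G/\!\sim$ is an $n$-ary group is then immediate from the coset description. The step that really carries the argument — and the one I would be most careful about — is the reduction to an abelian retract (so that $\sim$ becomes a coset relation) together with the verification that $H=\{x\cdot\varphi(x)^{-1}:x\in G\}$ is a $\varphi$-invariant subgroup; granted these, the congruence property follows at once.
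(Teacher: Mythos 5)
Your argument is correct, but it follows a genuinely different route from the paper's. The paper stays entirely inside the $n$-ary formalism: semiabelian implies medial, so it transposes the $n\times n$ array of arguments in $f(b_1^n)=f\bigl(f(x_1,a_1,\stackrel{(n-3)}{x_1},\overline{x}_1),\ldots\bigr)$ via \eqref{med-law} and then applies \eqref{e-med} to conclude directly that
$f(b_1^n)=f\bigl(f(x_1^n),f(a_1^n),\stackrel{(n-3)}{f(x_1^n)},\overline{f(x_1^n)}\,\bigr)$,
so the single element $f(x_1^n)$ witnesses $f(a_1^n)\sim f(b_1^n)$ --- two lines, no auxiliary structure. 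You instead pass to the Hossz\'u--Gluskin coordinates, where semiabelianity makes the retract $(G,\cdot)$ abelian, and you identify $\sim$ with the coset relation of the $\varphi$-invariant subgroup $H=\{x\cdot\varphi(x)^{-1}:x\in G\}$ (note that since $a\in\varphi(a)H$, your classes $\varphi(a)H$ are just the ordinary cosets $aH$). Your computation of $f(b_1^n)$ checks out, as do the claims that $x\mapsto x\varphi(x)^{-1}$ is an endomorphism (this is exactly where commutativity is used) and that $\varphi(H)=H$. What your approach buys is more structural information: an explicit description of the congruence classes and of the quotient as the $n$-ary group $(\bar\varphi,\bar b)$-derived from $(G/H,\cdot)$; what it costs is reliance on Theorem~\ref{T-HG} and a longer verification, where the paper needs only mediality. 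Both proofs invoke Proposition~\ref{P-equiv} for the equivalence-relation part.
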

\begin{proof}
Indeed, by Proposition \ref{P-equiv} it is an equivalence
relation. To prove that it is a congruence let $a_i\sim b_i$,
i.e., $f(x_i,a_i,\stackrel{(n-3)}{x_i},\overline{x}_i)=b_i$ for
some $x_i\in G$ and all $i=1,\ldots,n$. Then
\[
f(b_1^n)=f(f(x_1,a_1,\stackrel{(n-3)}{x_1},\overline{x}_1),f(x_2,a_2,\stackrel{(n-3)}{x_2},\overline{x}_2),
\ldots,f(x_n,a_n,\stackrel{(n-3)}{x_n},\overline{x}_n)),
\]
which by the mediality and \eqref{e-med} gives
\[
f(b_1^n)=f(f(x_1^n),f(a_1^n),\underbrace{f(x_1^n),\ldots,f(x_1^n)}_{n-3},\overline{f(x_1^n)}\,).
\]
Thus $f(a_1^n)\sim f(b_1^n)$.
\end{proof}

\begin{remark}
The formula \eqref{e-n} says that in $n$-ary groups $b$-derived
from a group $(G,\cdot)$ the above relation coincides with the
conjugation in $(G,\cdot)$. Thus in non-semiabelian $n$-ary groups
it may not be a congruence.
\end{remark}

Elements belonging to the same equivalence class are called {\it
conjugate}. The equivalence classes are called {\it conjugate
classes} of an $n$-ary group $G$ and have the form
$$
Cl_G(a)=\{ f(x,a,\stackrel{(n-3)}{x},\overline{x}): x\in G\}.
$$

As a simple consequence of \eqref{med-law} and \eqref{e-med} we
obtain
\begin{proposition}
In semiabelian $n$-ary group the set containing all elements of
$G$ conjugated with elements of a given $n$-ary subgroup also is
an $n$-ary subgroup.
\end{proposition}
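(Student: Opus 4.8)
The plan is to fix an $n$-ary subgroup $H$ of the semiabelian $n$-ary group $(G,f)$ and to work with the set
$$
K=\{\,f(x,h,\stackrel{(n-3)}{x},\overline{x})\ :\ x\in G,\ h\in H\,\}=\bigcup_{h\in H}Cl_G(h).
$$
First I would observe that $K$ is non-empty; in fact $H\subseteq K$, since the conjugacy relation is reflexive (Proposition~\ref{P-equiv} applied to the action of Theorem~\ref{T-act}), so each $h\in H$ lies in $Cl_G(h)$. By the subgroup criterion recalled after Theorem~\ref{dor-th}, it then suffices to verify that $K$ is closed under the operation $f$ and under the skew map $x\mapsto\overline{x}$.

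For closure under $f$, I would take $k_i=f(x_i,h_i,\stackrel{(n-3)}{x_i},\overline{x}_i)\in K$ with $h_i\in H$, for $i=1,\ldots,n$, and arrange their arguments in an $n\times n$ array whose $i$-th row is $(x_i,h_i,\stackrel{(n-3)}{x_i},\overline{x}_i)$; note this row has $1+1+(n-3)+1=n$ entries. Since $f(k_1^n)$ is $f$ applied to the rows, the mediality law \eqref{med-law} rewrites it as $f$ applied to the columns:
$$
f(k_1^n)=f\big(f(x_1^n),\,f(h_1^n),\,\underbrace{f(x_1^n),\ldots,f(x_1^n)}_{n-3},\,f(\overline{x}_1^n)\big).
$$
By \eqref{e-med} one has $f(\overline{x}_1^n)=\overline{f(x_1^n)}$, and $f(h_1^n)\in H$ because $H$ is closed under $f$. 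Setting $x=f(x_1^n)$ and $h=f(h_1^n)$ this reads $f(k_1^n)=f(x,h,\stackrel{(n-3)}{x},\overline{x})\in K$.

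For closure under the skew map, I would take $k=f(x,h,\stackrel{(n-3)}{x},\overline{x})\in K$ and apply \eqref{e-med} entrywise to get
$$
\overline{k}=f(\overline{x},\overline{h},\stackrel{(n-3)}{\overline{x}},\overline{\overline{x}}).
$$
With $y=\overline{x}$ the right-hand side is precisely $f(y,\overline{h},\stackrel{(n-3)}{y},\overline{y})$, i.e. the conjugate of $\overline{h}$ by $y$; since $\overline{h}\in H$ this gives $\overline{k}\in K$. This reading is convenient because it avoids having to decide whether $\overline{\overline{x}}=x$, which may fail for $n>3$. Combining the three observations, $K$ is an $n$-ary subgroup of $(G,f)$.

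I expect the only delicate point to be the bookkeeping in the mediality step: one must confirm that each conjugating expression contributes exactly $n$ entries, so that \eqref{med-law} applies to a genuine $n\times n$ array, and then read off the columns correctly — columns $1$ and $3,\ldots,n-1$ all equal $(x_1^n)$, column $2$ equals $(h_1^n)$, and column $n$ equals $(\overline{x}_1^n)$. Once this is set up, the rest is a direct appeal to \eqref{med-law}, \eqref{e-med}, and the subgroup criterion.
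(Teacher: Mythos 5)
Your proof is correct and follows exactly the route the paper intends: the paper states the result only as ``a simple consequence of \eqref{med-law} and \eqref{e-med}'' without writing out details, and your $n\times n$ mediality computation is the same one carried out in the preceding proposition on the congruence property. Your reading of $\overline{k}$ as the conjugate of $\overline{h}$ by $y=\overline{x}$ is a clean way to handle skew-closure without assuming $\overline{\overline{x}}=x$, which indeed can fail for $n>3$.
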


For $a\in G$, we define the {\it centralizer} of $a$, as follows
$$
C_G(a)=\{ x\in G: f(x,a,\stackrel{(n-3)}{x},\overline{x})=a\}.
$$
From Theorem \ref{dor-th} it follows that in $n$-ary groups
$b$-derived from a group $(G,\cdot)$ the centralizer of any $a\in
G$ coincides with the centralizer of $a$ in $(G,\cdot)$.

\begin{proposition}\label{P-23}
For every $x\in C_G(a)$ and every $0\leqslant i,j,k\leqslant n-2$
such that $i+j+k=n-2$ we have
$$
f(\stackrel{(i)}{x},a,\stackrel{(j)}{x},\overline{x},\stackrel{(k)}{x})=
f(\stackrel{(i)}{x},\overline{x},\stackrel{(j)}{x},a,\stackrel{(k)}{x})=a.
$$
\end{proposition}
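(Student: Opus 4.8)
The plan is to prove both identities by induction on $i$, using only Theorem~\ref{dor-th} and the associativity of $f$; the hypothesis $x\in C_G(a)$ --- which, in the notation of Theorem~\ref{T-act}, says precisely that $x.a=a$ --- enters only through the recursive step. Two consequences of Theorem~\ref{dor-th} will be used repeatedly: taking $j=2$ in \eqref{e-dor} gives the \emph{absorption rule} $f(w,\stackrel{(n-2)}{x},\overline{x})=w$ for every $w\in G$, while \eqref{e-dor} in full says that $f(\stackrel{(p)}{x},\overline{x},\stackrel{(q)}{x},y)=y=f(y,\stackrel{(p)}{x},\overline{x},\stackrel{(q)}{x})$ whenever $p+q=n-2$. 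In particular the asserted first identity for $i=0$ and the second for $k=0$ already hold with no hypothesis on $x$.

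For the first identity, fix an admissible triple $i+j+k=n-2$ and induct on $i$, the case $i=0$ being the remark just made. For $i\ge 1$ put $c=f(\stackrel{(i-1)}{x},a,\stackrel{(j)}{x},\overline{x},\stackrel{(k+1)}{x})$; this is a genuine $n$-ary product, $(i-1,j,k+1)$ is again admissible, so $c=a$ by the inductive hypothesis. Substituting this $c$ into $x.c=f(x,c,\stackrel{(n-3)}{x},\overline{x})$ and isolating the last entry of $c$, which is an $x$, one application of associativity re-groups the resulting $(2n-1)$-fold product so that its first $n$ entries are $\stackrel{(i)}{x},a,\stackrel{(j)}{x},\overline{x},\stackrel{(k)}{x}$ and the remaining $n-1$ entries are $\stackrel{(n-2)}{x},\overline{x}$; the absorption rule then collapses this to $f(\stackrel{(i)}{x},a,\stackrel{(j)}{x},\overline{x},\stackrel{(k)}{x})$. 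Hence $f(\stackrel{(i)}{x},a,\stackrel{(j)}{x},\overline{x},\stackrel{(k)}{x})=x.c=x.a=a$.

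For the second identity the inductive step is the same computation with the block $a,\stackrel{(j)}{x},\overline{x}$ replaced by $\overline{x},\stackrel{(j)}{x},a$, giving $f(\stackrel{(i)}{x},\overline{x},\stackrel{(j)}{x},a,\stackrel{(k)}{x})=x.\big[f(\stackrel{(i-1)}{x},\overline{x},\stackrel{(j)}{x},a,\stackrel{(k+1)}{x})\big]=x.a=a$, so only the base case $i=0$, namely $f(\overline{x},\stackrel{(j)}{x},a,\stackrel{(k)}{x})$ with $j+k=n-2$, needs a separate argument. Here I invoke the first identity, already proved, in the form $a=f(\stackrel{(k)}{x},a,\stackrel{(j)}{x},\overline{x})$, substitute it for the inner occurrence of $a$, and re-associate so that the first $n$ entries of the $(2n-1)$-fold product become $\overline{x},\stackrel{(n-2)}{x},a$; since $f(\overline{x},\stackrel{(n-2)}{x},a)=a$ by \eqref{e-dor}, this yields $f(\overline{x},\stackrel{(j)}{x},a,\stackrel{(k)}{x})=f(a,\stackrel{(j)}{x},\overline{x},\stackrel{(k)}{x})=a$, the last equality again by \eqref{e-dor}.

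The one point requiring care is the associativity bookkeeping: in each re-association one must check that the entry peeled off is an $x$ (it is, because the relevant block ends in an $x$, e.g. $k+1\ge 1$) and that the shifted triples stay in $\{0,\dots,n-2\}$ with sum $n-2$. Both verifications are routine, and with them in place the whole argument rests on Theorem~\ref{dor-th}.
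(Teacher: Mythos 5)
Your proof is correct and follows essentially the same route as the paper's: both iterate the centralizer relation $x.a=a$ by repeatedly multiplying through and then simplifying with associativity and the identities of Theorem \ref{dor-th}. The only difference is bookkeeping --- the paper explicitly establishes only the case $k=0$ (with $\overline{x}$ at the end) and dispatches the remaining positions of $\overline{x}$ and the second identity with ``Theorem \ref{dor-th} completes the proof,'' whereas you carry the full triple $(i,j,k)$ through the induction and give the base case of the second identity its own (correct) argument.
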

\begin{proof}
For every $x\in C_G(a)$, we have
$f(x,a,\stackrel{(n-3)}{x},\overline{x})=a$. Multiplying this
equation on the left by $x$ and on the right by
$x,\ldots,x,\overline{x}$ ($n-2$ elements), we obtain
$$
f(x,f(x,a,\stackrel{(n-3)}{x},\overline{x}),\stackrel{(n-3)}{x},\overline{x})=f(x,a,\stackrel{(n-3)}{x},\overline{x})=a,
$$
which in view of the associativity of the operation $f$ and
\eqref{e-skew} gives
$$
f(x,x,a,\stackrel{(n-4)}{x},\overline{x})=a.
$$
Repeating this procedure we obtain
$$
f(\stackrel{(i)}{x},a,\stackrel{(n-i-2)}{x},\overline{x})=a
$$
for every $1\leqslant i\leqslant n-2$. Theorem \ref{dor-th}
completes the proof.
\end{proof}

\section{ G-modules and Representations}

All vector spaces in this section  are defined over the field of
complex numbers and have  finite dimension.
\begin{definition}
Suppose that an $n$-ary group $G$ acts on a vector space $V$ and
we have
\begin{enumerate}
\item \ $x.(\lambda v+u)=\lambda x.v+x.u$,
\item \ $\exists p\in G\ \forall v\in V: p.v=v$.
\end{enumerate}
Then we call $(V, p)$, or simply $V,$ a {\it $G$-module}.
\end{definition}

Notions, such as $G$-submodule, $G$-homomorphism, irreducibility
and so on, are defined by the ordinary way.

\begin{definition}
A map $\Lambda:G \rightarrow GL(V)$ with the property
$$
\Lambda(f(x_1^n))=\Lambda(x_1)\Lambda(x_2)\ldots\Lambda(x_n)
$$
is a {\it representation} of $G$, provided that $\ker \Lambda$ is
non-empty. The function
$$
\chi(x)=Tr\ \Lambda(x)
$$
is called the corresponding {\it character} of $\Lambda$.
\end{definition}

\begin{remark}
If $V$ is a $G$-module, then $\Lambda$ defined by
$$
\Lambda(x)(v)=x.v
$$
is a representation of $G$. The converse is also true.
\end{remark}

\begin{example}\label{Ex-34}
Let $A$ be an arbitrary binary group with a normal subgroup $H.$
Let $a\in A\setminus H$ be an involution. Then $G=aH$ with the
operation
$$
f(x,y,z)=xyz
$$
is a ternary group. If $\Lambda$ is an ordinary representation of
$A$ with the property $a\in\ker\Lambda$, then, clearly $\Lambda$
is also a representation of $G$. For example, suppose
$A=GL_n(\mathbb{C})$ and $H=SL_n(\mathbb{C})$. Let $a=diag(-1, 1,
\ldots, 1)$ and define $G=aH$. Then, every representation of $A$
in which $a\in \ker \Lambda$ is also a representation of a ternary
group $G$.
\end{example}

\begin{example}\label{ex-35}
For any subgroup $H$ of an ordinary group $A$ and any element
$a\in Z(A)\setminus H$ with the order $n$ we define on $G=aH$ an
$n$-ary operation by
$$
f(x_1,x_2,\ldots ,x_n)=ax_1x_2\ldots x_n.
$$
This operation is associative, because $a\in Z(A)$. Also, $G$ is
closed under this operation, since $o(a)=n$. So, $G$ is an $n$-ary
group. Any $A$-representation $\Lambda$ with $a\in\ker\Lambda$ is
also a $G$-representation.
\end{example}

\begin{example}\label{Ex-36}
The set $G=\mathbb{Z}_n$ with the ternary operation
$$
f(x,y,z)=x-y+z\,({\rm mod}\ n)
$$
is, by Theorem \ref{T-HG}, a ternary group. We want to classify
all representations of $G$.

\medskip

Let $\Lambda :G\rightarrow GL_m(\mathbb{C})$ be any
representation. Then we have
$$
\Lambda(f(x,y,z))=\Lambda(x)\Lambda(y)\Lambda(z),
$$
equivalently,
$$
\Lambda(x-y+z)=\Lambda(x)\Lambda(y)\Lambda(z).
$$
We have
$$
\Lambda(x+y)=\Lambda(x)\Lambda(0)\Lambda(y), \ \
\Lambda(x-y)=\Lambda(x)\Lambda(y)\Lambda(0).
$$
Suppose  $A=\Lambda(0)$. We have
$$
\Lambda(x+y)=\Lambda(x)A\Lambda(y).
$$
It is easy to see that $A^2=I$. Now, define
$\Lambda^{\prime}(x)=A\Lambda(x)$. Then
$$
\Lambda^{\prime}(x+y)=\Lambda^{\prime}(x)\Lambda^{\prime}(y),
$$
and so, $\Lambda^{\prime}$ is an ordinary representation of
$(\mathbb{Z}_n, +)$. Hence, every representation of the ternary
group $G$ is of the form $\Lambda(x)=A\Lambda^{\prime}(x)$, where
$A$ is an involution and $\Lambda^{\prime}$ is an ordinary
representation of $(\mathbb{Z}_n, +)$.

\medskip

Similarly, we can classify all representations of ternary groups
of the form $G=(A, f)$, where $A$ is an ordinary abelian group and
$$
f(x,y,z)=x-y+z.
$$
\end{example}

\begin{theorem}
{\sf (Maschke)} Let $G$ be a finite $n$-ary group. Then every
$G$-module is completely reducible.
\end{theorem}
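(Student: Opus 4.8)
The plan is to reduce the statement to the classical Maschke theorem for ordinary finite groups via the Hossz\'u--Gluskin description (Theorem~\ref{T-HG}). Fix an element $a\in G$ and write $(G,\cdot)=Ret_a(G,f)$, so that $(G,f)$ is $(\varphi,b)$-derived from the finite group $(G,\cdot)$; in particular $|G|$ is invertible in $\mathbb{C}$. Given a $G$-module $(V,p)$ with associated representation $\Lambda\colon G\to GL(V)$ satisfying $\Lambda(f(x_1^n))=\Lambda(x_1)\cdots\Lambda(x_n)$, the first step is to produce from $\Lambda$ an ordinary representation of the binary group $(G,\cdot)$. Following the pattern of Example~\ref{Ex-36}, set $A=\Lambda(p)$ where $p.v=v$ for all $v$, so $A$ acts as the identity on $V$ anyway; more usefully, one checks that the map $\Lambda'(x)=\Lambda(\overline a)^{-1}\Lambda(x)$ (or the appropriate twist dictated by the retract identity $x\ast y=f(x,\stackrel{(n-2)}{a},y)$) is multiplicative for $(G,\cdot)$. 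Concretely, from $\Lambda(f(x,\stackrel{(n-2)}{a},y))=\Lambda(x)\Lambda(a)^{n-2}\Lambda(y)$ and the fact that $\ker\Lambda$ is nonempty one deduces that $\Lambda(a)^{n-1}$ and the relevant powers behave coherently, so that a fixed invertible operator $C$ can be extracted with $\Lambda'(x\ast y)=\Lambda'(x)\Lambda'(y)$, i.e. $\Lambda'$ is an honest representation of the finite group $Ret_a(G)$ on $V$.

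The second step is to observe that the $G$-submodule structure of $V$ coincides with the $(G,\cdot)$-submodule structure under $\Lambda'$. This is because $\Lambda'(x)$ differs from $\Lambda(x)$ only by multiplication by a fixed invertible operator, and the $G$-action generated by the $\Lambda(x)$'s and the $Ret_a(G)$-action generated by the $\Lambda'(x)$'s have the same invariant subspaces: any $f$-closed submodule is $\ast$-closed and conversely, since $\ast$ is built from $f$ and $f(x_1^n)$ can be rewritten (via repeated insertion of $\overline a$ and Theorem~\ref{dor-th}) as an iterated $\ast$-product. Hence complete reducibility of $V$ as a $G$-module is equivalent to complete reducibility of $V$ as an ordinary $Ret_a(G)$-module.

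The third step is to invoke the classical Maschke theorem: since $Ret_a(G)$ is a finite group and $\mathrm{char}\,\mathbb{C}=0$, every finite-dimensional $\mathbb{C}[Ret_a(G)]$-module is completely reducible. Combined with the equivalence of submodule lattices from step two, this gives that $V$ is completely reducible as a $G$-module, which is the claim. Alternatively, one can give a direct averaging argument: given a $G$-submodule $W\subseteq V$ with an ordinary vector-space projection $\pi\colon V\to W$, form $\widetilde\pi=\frac{1}{|G|}\sum_{x\in G}\Lambda(x)\,\pi\,\Lambda(x)^{-1}$, but here one must be careful that $\Lambda(x)^{-1}$ need not lie in $\Lambda(G)$ for $n$-ary groups, which is precisely why passing through the retract is cleaner.

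The main obstacle is the bookkeeping in the first step: making the twist $\Lambda'$ explicit and verifying multiplicativity requires using the full force of the Hossz\'u--Gluskin identities ($\varphi(b)=b$, $\varphi^{n-1}(x)=bxb^{-1}$) together with the hypothesis $\ker\Lambda\neq\emptyset$ to pin down $\Lambda$ on the relevant element $b$ and on skew elements $\overline x$. Once $\Lambda'$ is correctly identified as a genuine group representation, the rest is a direct appeal to the binary Maschke theorem and a routine check that the two notions of reducibility agree.
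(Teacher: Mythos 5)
Your argument takes a genuinely different route from the paper's. The paper proves the theorem by a direct averaging argument carried out entirely inside the $n$-ary setting: starting from a vector-space projection $\varphi:V\to W$ it forms $\theta(v)=\frac{1}{|G|}\sum_{x\in G}\overline{x}.\varphi(x.v)$, with the skew element $\overline{x}$ playing the role of the inverse, and checks that $\theta$ is a $G$-equivariant idempotent onto $W$, so $V=W\oplus\ker\theta$. You instead reduce everything to the classical Maschke theorem for a retract group. That reduction is sound, but your first step is far more laborious than necessary: there is no need to work at a general base point $a$, invoke Hossz\'u--Gluskin, or hunt for a twisting operator $C$. Simply take the retract at the distinguished element $p$ with $p.v=v$ for all $v$ (i.e.\ $\Lambda(p)=\mathrm{id}$, $p\in\ker\Lambda$); then $\Lambda(x\ast y)=\Lambda(f(x,\stackrel{(n-2)}{p},y))=\Lambda(x)\Lambda(p)^{n-2}\Lambda(y)=\Lambda(x)\Lambda(y)$, so $\Lambda$ itself is already an ordinary representation of the finite group $Ret_p(G)$ on $V$ with \emph{the same operators}, the two submodule lattices coincide trivially, and classical Maschke finishes the proof. (This is precisely the observation the paper records in the remark immediately following the theorem.) Note also that your reason for dismissing the direct averaging --- that $\Lambda(x)^{-1}$ need not lie in $\Lambda(G)$ --- is unfounded: once $\Lambda$ is seen to be multiplicative on $Ret_p(G)$, formula \eqref{inverse} gives $\Lambda(x)^{-1}=\Lambda\big(f(\overline{p},\stackrel{(n-3)}{x},\overline{x},\overline{p})\big)=\Lambda(x)^{n-3}\Lambda(\overline{x})\in\Lambda(G)$, which is exactly what makes an averaging argument over $G$ legitimate (and is the role the skew element plays in the paper's $\theta$). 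In short: your route buys a complete outsourcing to the binary theory at the cost of an overbuilt first step; the paper's buys a self-contained $n$-ary analogue of the classical proof.
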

\begin{proof} Let $(V, p)$ be a $G$-module and $W\leq_G V$.
Suppose $V=W\oplus X$, where $X$ is just a subspace. Let $\varphi:
V\rightarrow W$ be the corresponding projection. Define a  new map
$\theta:V\rightarrow V$ as
$$
\theta(v)=\frac{1}{|G|}\sum_{x\in G}\overline{x}.\varphi(x.v).
$$
It is easy to see that
\[
\theta(x.v)=x.p.\,\ldots\, .p.\theta(v)=x.\theta(v).
\]
So $\theta$ is a $G$-homomorphism and hence its kernel is a $G$-submodule. For all $w\in W$, we have $\theta(w)=w$ and so $\theta^2=\theta$.
Now, we have $V=W\oplus \ker \theta$.
\end{proof}

\begin{remark}
Any $G$-module $(V,p)$ is also an ordinary $Ret_p(G)$-module,
because
\[
(x\ast y).v=f(x,\stackrel{(n-2)}{p},y).v=x.p.\,\ldots\,
.p.y.v=x.y.v.
\]
\end{remark}

From now on, we  will  assume that $e\in G$ is an arbitrary fixed
element. For all $p\in G$, we have $Ret_e(G)\cong Ret_p(G)$ and
further the isomorphism is given by the following rule
$$
h(x)=f(\stackrel{(n-2)}{e},x,\overline{p}\,).
$$

By $\hat{G}$ we denote the binary group $Ret_e(G)$. If $(V, p)$ is
a $G$-module, then we can define a $\hat{G}$-module structure on
$V$ by $x\circ v=h(x).v$. So, we have
$$
x\circ v=f(\stackrel{(n-2)}{e},x,\overline{p}\,).v=e.\,\ldots\,
.e.x.\overline{p}.v.
$$
But, we have
$\overline{p}.v=\overline{p}.p.\,\ldots\,.p.v=f(\overline{p}\,,\stackrel{(n-1)}{p}).v=p.v=v$.
Hence
$$
x\circ v=\underbrace{e.\,\ldots\,.e}_{n-2}.x.v.
$$
Now, every $G$-module is also a $\hat{G}$-module, but the converse is not true in general. During
this article, we will give some necessary and sufficient conditions for a $\hat{G}$-module to be also a
$G$-module. The next proposition is the first condition of this type.

\begin{proposition}\label{P-cond}
Let $V$ be a $\hat{G}$-module. Then $V$ is a $G$-module iff
$$
\forall x_2,\ldots,x_{n-1}\in G\ \forall v:\
f(\overline{e},x_2^{n-1},\overline{e}\,).v=x_2.x_3.\,\ldots\,.x_{n-1}.v.
$$
\end{proposition}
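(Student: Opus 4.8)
## Proof Proposal

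The plan is to exploit the fact that a $G$-module structure on $V$ is completely determined by the underlying $\hat{G}$-module structure together with how the action of $G$ decomposes through the retract. The key observation is that we already know, from the computation preceding the proposition, that $x\circ v = \underbrace{e.\,\ldots\,.e}_{n-2}.x.v$ whenever $V$ is a $G$-module; equivalently, $x.v = \overline{e}\circ\,\overline{e}\circ\cdots\circ\overline{e}\circ (x\circ v)$ in terms of the group operation on $\hat{G}$, since $e$ acts on a $G$-module the way $\overline{e}$'s inverse does. So given a $\hat{G}$-module $(V,\circ)$, the only candidate for a compatible $G$-action is forced: one must \emph{define} $x.v$ by unwinding the formula $h(x).v = x\circ v$ and using $e.v$ to be the appropriate shift. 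Concretely, set $x.v := (n{-}2)$ copies of $\overline{e}$ acting via $\circ$, composed with $x\circ v$; this is the unique possibility, and the content of the proposition is exactly when this candidate actually satisfies axiom $(i)$ of the $G$-action.

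First I would make the forward direction routine: if $V$ is a $G$-module, then $f(\overline{e},x_2^{n-1},\overline{e}).v$ can be expanded using axiom $(i)$ as $\overline{e}.x_2.x_3.\cdots.x_{n-1}.\overline{e}.v$, and then one uses $\overline{e}.v = v$ (which follows exactly as in the displayed computation $\overline{p}.v = f(\overline{p},\stackrel{(n-1)}{p}).v = p.v = v$, applied with $p$ replaced by the element whose skew is $e$ — or more directly, since $e$ is just a fixed element and we can realize $\overline{e}$ acting trivially) together with $\overline{e}.v=v$ again on the other side. Wait — more carefully: $f(\overline{e},x_2,\ldots,x_{n-1},\overline{e}) = f(f(\overline{e},\stackrel{(n-1)}{e}),\,\cdot\,)$-type manipulations via Theorem~\ref{dor-th} reduce the two flanking $\overline{e}$'s. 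In fact the cleanest route is to note $f(\overline{e},x_2^{n-1},\overline{e})$ equals, after inserting $e$'s and using D\"ornte's identities \eqref{e-dor}, simply the element acting as $x_2.x_3.\cdots.x_{n-1}$ once the trivial action of $e$ (on the relevant module) is accounted for. I would phrase this using the already-established identity $x\circ v = e.\ldots.e.x.v$ and its inverse.

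For the converse — this is the main obstacle — I would assume the displayed identity holds and verify that the candidate action $x.v$ (defined from $\circ$ as above) satisfies $(i)$: namely $f(x_1^n).v = x_1.(x_2.(\cdots.(x_n.v)))$. The strategy is to translate everything into the group $\hat{G}$ using the isomorphism $h(x) = f(\stackrel{(n-2)}{e},x,\overline{p})$ (here with $p = e$, so $h(x) = f(\stackrel{(n-2)}{e},x,\overline{e})$) and the Hossz\'u–Gluskin description of $f$ from Theorem~\ref{T-HG}: writing $f(x_1^n) = x_1\cdot\varphi(x_2)\cdots\varphi^{n-1}(x_n)\cdot b$ in $\hat{G} = Ret_e(G)$, one pushes $f(x_1^n)$ through $h$ and expands. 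The right-hand side $x_1.(\cdots.(x_n.v))$ becomes an iterated composition in $\circ$, and collecting the $\overline{e}$-shifts produces precisely a term of the form $f(\overline{e}, (\text{stuff}), \overline{e}).v$ which, by the hypothesized identity, collapses to the product of the inner terms — matching the left side. I expect the bookkeeping of how many copies of $e$ (respectively $\overline{e}$) appear, and confirming the twist by $\varphi$ lands correctly, to be the delicate part; the hypothesis is exactly the single nontrivial compatibility needed to make the $n$-fold composition agree with the single application of $f$, so the proof should reduce to checking that this one instance suffices to generate all instances by induction on iterated application.
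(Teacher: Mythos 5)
There is a genuine gap: the decisive step of the proof is a single algebraic identity in $(G,f)$, namely
$$
f(x_1^n)=x_1\ast f(\overline{e},x_2^{n-1},\overline{e}\,)\ast x_n ,
$$
where $\ast$ is the operation of $\hat{G}=Ret_e(G)$. It is obtained by writing $x_1=f(x_1,\stackrel{(n-2)}{e},\overline{e}\,)$ and $x_n=f(\overline{e},\stackrel{(n-2)}{e},x_n)$ (instances of D\"ornte's identities \eqref{e-dor}) and reassociating. Once this is in hand both directions are immediate: the $\hat{G}$-module structure gives $f(x_1^n).v=x_1.\bigl(f(\overline{e},x_2^{n-1},\overline{e}\,).(x_n.v)\bigr)$, and since $v\mapsto x_n.v$ is a bijection of $V$, the axiom $f(x_1^n).v=x_1.x_2.\,\ldots\,.x_n.v$ holds for all inputs iff the displayed condition of the proposition holds. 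Your proposal never produces this identity or any substitute for it; the converse direction is deferred to ``delicate bookkeeping'' through the Hossz\'u--Gluskin form plus an unspecified induction on iterated applications of $f$, which is exactly the part that has to be carried out. No induction is needed, and the Hossz\'u--Gluskin machinery is not needed either.

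There are also two slips in your setup. First, $\overline{e}$ is the identity element of $\hat{G}$, so ``composing with $n-2$ copies of $\overline{e}$ acting via $\circ$'' does nothing: your ``forced candidate action'' collapses to the given $\hat{G}$-action itself. That happens to be the intended reading of the proposition (the question is whether the very same map $(x,v)\mapsto x.v$ satisfies the $n$-ary axiom), but your justification by inverting $x\circ v=e.\,\ldots\,.e.x.v$ is not valid: that relation does not determine the $G$-action from the $\hat{G}$-action (the later propositions on when $\hat{\chi}_1=\hat{\chi}_2$ exist precisely because it does not). Second, in the forward direction $\overline{e}.v=v$ holds simply because $\overline{e}$ is the identity of $\hat{G}$ and $V$ is a $\hat{G}$-module, not via the $\overline{p}.v=v$ computation you cite; with that observation the forward direction is indeed routine, but as written your argument for it is circular hand-waving (``once the trivial action of $e$ is accounted for'').
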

\begin{proof} We have
\begin{eqnarray*}
f(x_1^n)&=&f(f(x_1,\stackrel{(n-2)}{e},\overline{e}\,),x_2^n)\\
        &=&f(x_1,\stackrel{(n-2)}{e},f(\overline{e},x_2^n))\\
        &=&x_1\ast f(\overline{e},x_2^n)\\
        &=&x_1\ast f(\overline{e},x_2^{n-1},f(\overline{e},\stackrel{(n-2)}{e},x_n))\\
        &=&x_1\ast f(\overline{e},x_2^{n-1},\overline{e}\,)\ast x_n .
\end{eqnarray*}
So, the equality
$$
f(x_1^n).v=x_1.x_2.\,\ldots\,.x_{n-1}.x_n.v
$$
holds, iff
$$
f(\overline{e},x_2^{n-1},\overline{e}\,).v=x_2.x_3.\,\ldots\,.x_{n-1}.v
$$
for all $x_2,\ldots,x_{n-1}$ and $v$.
\end{proof}

\begin{remark}
Suppose that $V$ is a $G$-module in which the corresponding
representation is $\Lambda$. We know that $V$ is also a
$\hat{G}$-module. The corresponding representation of this last
module is
$$
\hat{\Lambda}(x)=\underbrace{\Lambda(e)\ldots\Lambda(e)}_{n-2}\Lambda(x).
$$
Because in $\hat{G}$, the identity element is $\overline{e}$, we
have
$$
\hat{\Lambda}(\overline{e})=id.
$$
So $\Lambda(e)^{n-2}\Lambda(\overline{e})=id$ and hence
$$
\Lambda(\overline{e})=\Lambda(e)^{2-n}.
$$
In the sequel, the corresponding character of $\hat{\Lambda}$, will be denoted by $\hat{\chi}$.
\end{remark}

\begin{proposition}
Suppose that $\Lambda$ is a representation of  $G$ with the
character $\chi$. Then $\chi$ is fixed on the conjugate classes of
$G$.
\end{proposition}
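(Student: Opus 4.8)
The plan is to reduce the polyadic ``conjugate'' $f(x,a,\stackrel{(n-3)}{x},\overline{x})$ to an ordinary conjugation inside $GL(V)$ and then invoke the cyclic invariance of the trace. First I would apply the defining homomorphism property of $\Lambda$ to the $n$-tuple $(x,a,\underbrace{x,\ldots,x}_{n-3},\overline{x})$, which gives
$$
\Lambda\big(f(x,a,\stackrel{(n-3)}{x},\overline{x})\big)=\Lambda(x)\,\Lambda(a)\,\Lambda(x)^{n-3}\,\Lambda(\overline{x}).
$$

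The only non-formal ingredient is to identify $\Lambda(\overline{x})$. For this I would use the Dörnte identity \eqref{e-skew} with $k=1$, namely $f(\overline{x},\stackrel{(n-1)}{x})=x$, apply $\Lambda$ to obtain $\Lambda(\overline{x})\,\Lambda(x)^{n-1}=\Lambda(x)$, and then cancel $\Lambda(x)^{n-1}$ on the right — legitimate precisely because $\Lambda(x)\in GL(V)$ is invertible. This yields $\Lambda(\overline{x})=\Lambda(x)^{2-n}$, the same relation the preceding Remark records for the special element $e$. Substituting back, the powers of $\Lambda(x)$ on the right collapse, since $\Lambda(x)^{n-3}\Lambda(x)^{2-n}=\Lambda(x)^{-1}$, so that
$$
\Lambda\big(f(x,a,\stackrel{(n-3)}{x},\overline{x})\big)=\Lambda(x)\,\Lambda(a)\,\Lambda(x)^{-1}.
$$

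Finally I would take traces and use $\mathrm{Tr}(PQP^{-1})=\mathrm{Tr}(Q)$ to conclude $\chi\big(f(x,a,\stackrel{(n-3)}{x},\overline{x})\big)=\chi(a)$; since every element of $Cl_G(a)$ is of this form, $\chi$ is constant on each conjugate class. I do not expect a serious obstacle here: the one point deserving a word of care is that the cancellation producing $\Lambda(\overline{x})=\Lambda(x)^{2-n}$ genuinely uses that $\Lambda$ takes values in $GL(V)$ rather than in a monoid of endomorphisms, and that the exponent bookkeeping $(n-3)+(2-n)=-1$ on the right-hand block works out for every $n>2$ (the case $n=3$ simply giving an empty block of $x$'s).
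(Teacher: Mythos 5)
Your proof is correct, but it runs along a noticeably different track from the paper's. You first establish the identity $\Lambda(\overline{x})=\Lambda(x)^{2-n}$ for \emph{every} $x\in G$, by applying $\Lambda$ to the D\"ornte identity $f(\overline{x},\stackrel{(n-1)}{x})=x$ and cancelling in $GL(V)$; this turns $\Lambda(x)\Lambda(a)\Lambda(x)^{n-3}\Lambda(\overline{x})$ into the genuine similarity $\Lambda(x)\Lambda(a)\Lambda(x)^{-1}$, and the conclusion is immediate from $\mathrm{Tr}(PQP^{-1})=\mathrm{Tr}(Q)$. The paper never computes $\Lambda(\overline{x})$ for general $x$ (its Remark records the analogous formula only for the fixed element $e$, and Proposition \ref{P-rep} only for retract representations); instead it inserts $\Lambda(e)^{n-2}\Lambda(\overline{e})=\mathrm{id}$, cycles one factor $\Lambda(x)$ to the back under the trace, and then collapses the tail $\Lambda(\overline{e})\Lambda(x)^{n-3}\Lambda(\overline{x})\Lambda(x)=\Lambda(f(\overline{e},\stackrel{(n-3)}{x},\overline{x},x))=\Lambda(\overline{e})$ via identity \eqref{e-dor}. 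Your route is shorter and buys slightly more: it shows that $\Lambda(b)$ and $\Lambda(a)$ are actually similar matrices for conjugate $a,b$, not merely that they have equal traces, and it isolates the reusable fact $\Lambda(\overline{x})=\Lambda(x)^{2-n}$ as a lemma. Your two points of care --- that cancellation requires values in $GL(V)$, and that the exponent sum $(n-3)+(2-n)=-1$ works for all $n>2$ --- are exactly the right ones, and both check out.
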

\begin{proof} Indeed, for every $b\in Cl_G(a)$ we have
$$
\Lambda(b)=\Lambda(f(x,a,\stackrel{(n-3)}{x},\overline{x}\,))=
\Lambda(x)\Lambda(a)\Lambda(x)^{n-3}\Lambda(\overline{x}),
$$
so
\begin{eqnarray*}
\chi(b)&=&Tr\ (\Lambda(x)\Lambda(a)\Lambda(x)^{n-3}\Lambda(\overline{x}))\\
         &=&Tr\ (\Lambda(x)\Lambda(a)\Lambda(e)^{n-2}\Lambda(\overline{e})\Lambda(x)^{n-3}\Lambda(\overline{x}))\\
         &=&Tr\ (\Lambda(a)\Lambda(e)^{n-2}\Lambda(\overline{e})\Lambda(x)^{n-3}\Lambda(\overline{x})\Lambda(x))\\
         &=&Tr\ (\Lambda(a)\Lambda(e)^{n-2}\Lambda(f(\overline{e}, \stackrel{(n-3)}{x}, \overline{x},x)))\\
         &=&Tr\ (\Lambda(a)\Lambda(e)^{n-2}\Lambda(\overline{e}))\\
         &=&Tr\ \Lambda(a)\\
         &=&\chi(a).
\end{eqnarray*}
This completes the proof.
\end{proof}

\begin{proposition}
Suppose that $\Lambda:(G,f)\rightarrow GL(V)$ is a representation
of the finite $n$-ary group $(G,f)$ with the corresponding
character $\chi$. Let
$$
\ker \chi=\{ x\in G: \chi(x)=\dim V\}.
$$
Then $\ker \chi=\ker \Lambda$.
\end{proposition}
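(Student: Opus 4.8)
The plan is to show the two inclusions separately, using the standard fact that a representation sends elements to matrices of finite order (since $G$ is finite, every $\Lambda(x)$ has finite multiplicative order in $GL(V)$, hence is diagonalizable with roots of unity as eigenvalues). The crucial observation is that $\chi(x) = \dim V$ forces $\Lambda(x)$ to be the identity: if the eigenvalues of $\Lambda(x)$ are roots of unity $\zeta_1,\dots,\zeta_m$ (where $m = \dim V$), then $|\chi(x)| = |\sum \zeta_i| \leqslant m$ with equality iff all $\zeta_i$ are equal; and since their common value is a root of unity whose sum with itself $m$ times equals $m$, that common value must be $1$, so $\Lambda(x) = I$.

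First I would establish that $\Lambda(x)$ has finite order: pick any $p \in \ker\Lambda$ (nonempty by definition of representation), so $\Lambda(p) = I$; then from $\Lambda(f(\stackrel{(n-1)}{x},p)) = \Lambda(x)^{n-1}\Lambda(p) = \Lambda(x)^{n-1}$ and, more usefully, from the action of $G$ on itself or simply from finiteness of $G$ together with the multiplicativity relation, one sees that the cyclic behaviour of $\Lambda(x)$ is constrained. Cleanly: consider $\hat\Lambda(x) = \Lambda(e)^{n-2}\Lambda(x)$, which is an ordinary representation of the finite group $\hat G = Ret_e(G)$; hence $\hat\Lambda(x)$ has finite order for every $x$, and $\Lambda(e)$ itself has finite order (it is $\hat\Lambda$ of some element times $\Lambda(\overline e)^{-1}$, or directly: $\hat\Lambda(\overline e) = I$ gives $\Lambda(\overline e) = \Lambda(e)^{2-n}$, and $\Lambda(e) = \Lambda(e)^{n-2}\Lambda(e)\cdot\Lambda(e)^{2-n} = \hat\Lambda(e)\hat\Lambda(\overline e)^{-1}$ — wait, more simply $\Lambda(e)$ is conjugate-free; just note $\Lambda(e)^{n-1} = \hat\Lambda(e)$ has finite order so $\Lambda(e)$ does too). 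Therefore every $\Lambda(x) = \Lambda(e)^{2-n}\hat\Lambda(x)$ is a product of two finite-order commuting-up-to-nothing matrices — actually I would instead argue directly that the set $\{\Lambda(x): x\in G\}$ is finite (as $G$ is finite) and closed under the partial operation $n$-fold multiplication, which suffices to conclude each element has finite order via a pigeonhole argument on powers landing back in the set.

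For the inclusion $\ker\Lambda \subseteq \ker\chi$: if $\Lambda(x) = I$ then $\chi(x) = \operatorname{Tr} I = \dim V$, so $x \in \ker\chi$. For the reverse inclusion $\ker\chi \subseteq \ker\Lambda$: suppose $\chi(x) = \dim V = m$. Since $\Lambda(x)$ has finite order it is diagonalizable over $\mathbb C$ with eigenvalues $\zeta_1,\dots,\zeta_m$ all roots of unity. Then $m = \chi(x) = \sum_{i=1}^m \zeta_i$, so $\sum_{i=1}^m (1-\zeta_i) = 0$; taking real parts, $\sum (1 - \operatorname{Re}\zeta_i) = 0$ with each summand $\geqslant 0$, forcing $\operatorname{Re}\zeta_i = 1$ for all $i$, hence $\zeta_i = 1$ for all $i$. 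Thus $\Lambda(x) = I$, i.e. $x \in \ker\Lambda$.

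The main obstacle — and the only genuine subtlety — is establishing that $\Lambda(x)$ has finite order, since unlike the binary-group case there is no ambient group identity in $G$ and $\Lambda$ need not be a homomorphism in the usual sense. The cleanest route is via the associated ordinary representation $\hat\Lambda$ of the finite binary group $\hat G$: one shows $\Lambda(e)$ has finite order (from $\Lambda(\overline e) = \Lambda(e)^{2-n}$ and $\hat\Lambda(\overline e) = I$, or from finiteness of $G$), deduces $\Lambda(x) = \Lambda(e)^{2-n}\hat\Lambda(x)$ lies in the finite group generated by $\operatorname{im}\hat\Lambda$ and $\Lambda(e)$ inside $GL(V)$, and concludes finite order. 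Once finite order is in hand, the eigenvalue argument above is routine and completes the proof.
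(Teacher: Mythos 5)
Your proposal is correct and follows essentially the same route as the paper: both reduce the problem to showing that $\Lambda(x)$ has finite multiplicative order, hence is diagonalizable with root-of-unity eigenvalues $\varepsilon_1,\ldots,\varepsilon_m$, and then observe that $\chi(x)=\dim V$ forces every $\varepsilon_i$ to equal $1$, so $\Lambda(x)=id$. The only difference is in the finite-order step, where the paper invokes the order $k$ of $x$ in $(G,f)$ (asserting $\Lambda(x)^{m^k}=\Lambda(x)$), whereas your pigeonhole argument on the finite set $\{\Lambda(x):x\in G\}$, which is closed under $n$-fold products and hence contains all powers $\Lambda(x)^{k(n-1)+1}$, reaches the same conclusion by an equally valid (and arguably more carefully justified) route; your detour through $\hat{\Lambda}$ and the group generated by $\operatorname{im}\hat{\Lambda}$ and $\Lambda(e)$ is the one piece you were right to abandon, since a group generated by finitely many finite-order matrices need not be finite.
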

\begin{proof} Let $\dim V=m$. It is clear that $\ker
\Lambda\subseteq\ker\chi$. Moreover, for each $x\in G$ of order
$k$ we have
$$
\Lambda(x)^{m^k}=\Lambda(x).
$$
Hence $\Lambda(x)$ is a root of the polynomial $T^{m^k-1}-1$. But,
this polynomial has distinct roots in $\mathbb{C}$, so
$\Lambda(x)$ can be diagonalized, i.e.,
$$
\Lambda(x)\sim diag(\varepsilon_1, \ldots, \varepsilon_m),
$$
where all $\varepsilon_i$ are roots of unity. Now, we have
$$
\chi(x)=\varepsilon_1+ \cdots+ \varepsilon_m.
$$
If $\chi(x)=m$, then $\varepsilon_i=1$ for all $i$. Hence
$\Lambda(x)=id$ and so $x\in \ker \Lambda$. This completes the
proof.
\end{proof}

In the next proposition, we obtain the explicit form of the
character $\hat{\chi}$.

\begin{proposition}
Let $\chi$ be a character of an $n$-ary group $(G,f)$. Then for
any $p\in\ker\chi$ we have
$$
\hat{\chi}(x)=\chi(f(\stackrel{(n-2)}{e},x,\overline{p}\,)).
$$
\end{proposition}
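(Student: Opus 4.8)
The plan is to translate everything into the representation $\Lambda$ underlying $\chi$ and reduce the claimed identity to a single statement about $\Lambda(\overline{p}\,)$. First I would recall, from the remark preceding the statement, that the character of $\hat{\Lambda}$ is $\hat{\chi}(x)=Tr(\hat{\Lambda}(x))=Tr(\Lambda(e)^{n-2}\Lambda(x))$ for all $x\in G$. On the other hand, since $\Lambda$ is a representation, $\chi(f(\stackrel{(n-2)}{e},x,\overline{p}\,))=Tr(\Lambda(f(\stackrel{(n-2)}{e},x,\overline{p}\,)))=Tr(\Lambda(e)^{n-2}\Lambda(x)\Lambda(\overline{p}\,))$. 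Hence the proposition amounts to the equality $Tr(\Lambda(e)^{n-2}\Lambda(x)\Lambda(\overline{p}\,))=Tr(\Lambda(e)^{n-2}\Lambda(x))$, and it is enough to prove that $\Lambda(\overline{p}\,)=id$.

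To obtain this, I would use the hypothesis $p\in\ker\chi$ together with the previous proposition, which gives $\ker\chi=\ker\Lambda$; thus $\Lambda(p)=id$. Applying $\Lambda$ to the defining relation $f(\stackrel{(n-1)}{p},\overline{p}\,)=p$ of the skew element (equation \eqref{skew}) yields $\Lambda(p)^{n-1}\Lambda(\overline{p}\,)=\Lambda(p)$, so $\Lambda(\overline{p}\,)=id$. Substituting this back into the expression for $\chi(f(\stackrel{(n-2)}{e},x,\overline{p}\,))$ and comparing with the formula for $\hat{\chi}(x)$ completes the argument.

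The only genuinely substantive ingredient is the equality $\ker\chi=\ker\Lambda$, but this has already been established in the preceding proposition (for finite $G$, as is implicit throughout this section), so there is no real obstacle; the rest is routine manipulation of traces and of the multiplicativity of $\Lambda$.
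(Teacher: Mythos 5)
Your proof is correct and follows essentially the same route as the paper: both reduce the claim to the trace identity $Tr(\Lambda(e)^{n-2}\Lambda(x)\Lambda(\overline{p}\,))=Tr(\Lambda(e)^{n-2}\Lambda(x))$. In fact you supply the one detail the paper's computation leaves implicit, namely why the factor $\Lambda(\overline{p}\,)$ may be dropped, by deriving $\Lambda(\overline{p}\,)=id$ from $p\in\ker\chi=\ker\Lambda$ and the skew-element relation \eqref{skew}.
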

\begin{proof} We know that $\chi$ is a character of $Ret_p(G)$.
On the other hand there is an isomorphism
$$
h: Ret_e(G)\rightarrow Ret_p(G) ,
$$
where $h(x)=f(\stackrel{(n-2)}{e},x,\overline{p}\,)$. So, the
composite map $\chi \circ h$ is a character of $Ret_e(G)$. Let
$\Lambda$ be the corresponding representation of $\chi$. Now, we
have
\begin{eqnarray*}
\chi(h(x))&=&Tr\ (\Lambda(e)^{n-2}\Lambda(x)\Lambda(\overline{p}))\\
                &=&Tr\ (\Lambda(e)^{n-2}\Lambda(x))\\
                &=&Tr\ \hat{\Lambda}(x).
\end{eqnarray*}
Hence
$\hat{\chi}(x)=\chi(f(\stackrel{(n-2)}{e},x,\overline{p}\,))$.
\end{proof}

\begin{remark}
Now, for any irreducible character $\chi$ of an $n$-ary group
$(G,f)$, we have an ordinary irreducible character $\hat{\chi}$ of
the binary group $\hat{G}=Ret_e(G)$. So, we obtain the following
orthogonality relation for the irreducible characters of $G$:
$$
\frac{1}{|G|}\sum_{x\in
G}\chi_1(f(\stackrel{(n-2)}{e},x,\overline{p}_1))\overline{\chi_2(f(\stackrel{(n-2)}{e},x,\overline{p}_2))}=\delta_{\hat{\chi}_1,
\hat{\chi}_2},
$$
where $p_1\in \ker \chi_1$ and $p_2\in \ker \chi_2$ are arbitrary
elements.
\end{remark}

\begin{proposition}\label{P-rep}
If a representation $\Gamma: Ret_e(G,f)\rightarrow GL(V)$ is also
a representation of the $n$-ary group $(G,f)$, then
$$
\Gamma(\overline{x})=\Gamma(x)^{2-n}
$$
for every $x\in G$.
\end{proposition}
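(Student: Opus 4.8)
The plan is to exploit D\"ornte's skew-element identity directly, bypassing the retract structure altogether. First I would recall that by \eqref{skew} (equivalently, the case $k=n$ of \eqref{e-skew} in Theorem \ref{dor-th}) we have
\[
f(\stackrel{(n-1)}{x},\overline{x})=x
\]
for every $x\in G$. Since $\Gamma$ is, by hypothesis, a representation of the $n$-ary group $(G,f)$ — that is, $\Gamma(f(x_1^n))=\Gamma(x_1)\Gamma(x_2)\ldots\Gamma(x_n)$ — applying this to the above identity with $x_1=\cdots=x_{n-1}=x$ and $x_n=\overline{x}$ gives
\[
\Gamma(x)^{n-1}\,\Gamma(\overline{x})=\Gamma(x).
\]

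Next, because $\Gamma$ takes values in $GL(V)$, the operator $\Gamma(x)$ is invertible; multiplying the last equation on the left by $\Gamma(x)^{-(n-1)}$ yields
\[
\Gamma(\overline{x})=\Gamma(x)^{1-(n-1)}=\Gamma(x)^{2-n},
\]
which is exactly the assertion.

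There is essentially no obstacle here: the only two ingredients are D\"ornte's identity and the invertibility of the representing operators, so in fact neither the homomorphism property of $\Gamma$ with respect to the retract multiplication $\ast$ nor the non-emptiness of $\ker\Gamma$ is needed for the conclusion. One could alternatively deduce the statement from $\Gamma(\overline{e})=id$ (valid since $\overline{e}$ is the identity of $Ret_e(G)$) together with the earlier Remark computing $\hat{\Lambda}$, but the direct route above is shorter and makes transparent that the relation $\Gamma(\overline{x})=\Gamma(x)^{2-n}$ is forced purely by compatibility with the $n$-ary operation $f$.
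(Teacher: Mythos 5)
Your argument is correct and is essentially identical to the paper's own proof: both apply $\Gamma$ to the skew-element identity $f(\stackrel{(n-1)}{x},\overline{x})=x$ to get $\Gamma(x)^{n-1}\Gamma(\overline{x})=\Gamma(x)$ and then cancel the invertible operator $\Gamma(x)$. Your closing observation that only the $n$-ary homomorphism property and invertibility are used is accurate but does not change the substance of the argument.
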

\begin{proof} Indeed, $f(\stackrel{(n-1)}{x},\overline{x})=x$ implies
$\Gamma(x)^{n-1}\Gamma(\overline{x})=\Gamma(x)$, which gives
$\Gamma(\overline{x})=\Gamma(x)^{2-n}$.
\end{proof}

\begin{corollary}\label{C-ter}
Let $(G,f)$ be a ternary group. Then a representation $\Gamma:
Ret_e(G,f)\rightarrow GL(V)$ is also a representation of $(G,f)$
iff
$$
\Gamma(\overline{x})=\Gamma(x)^{-1}
$$
for every $x\in G$.
\end{corollary}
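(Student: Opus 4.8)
The plan is to handle the two implications separately; the forward direction is essentially a restatement of Proposition~\ref{P-rep}.

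For the ``only if'' part, suppose $\Gamma$ is simultaneously a representation of $Ret_e(G,f)$ and of the ternary group $(G,f)$. Then Proposition~\ref{P-rep} applied with $n=3$ gives $\Gamma(\overline{x})=\Gamma(x)^{2-n}=\Gamma(x)^{-1}$ for every $x\in G$, and there is nothing more to do.

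For the ``if'' part, assume $\Gamma\colon Ret_e(G,f)\to GL(V)$ is an ordinary group homomorphism with $\Gamma(\overline{x})=\Gamma(x)^{-1}$ for all $x$, and verify the two clauses in the definition of a representation of $(G,f)$. The kernel clause is automatic: $\overline{e}$ is the identity of $Ret_e(G,f)$, so $\Gamma(\overline{e})=\mathrm{id}_V$ and hence $\overline{e}\in\ker\Gamma$, which makes $\ker\Gamma$ non-empty. For multiplicativity I would reuse the identity produced inside the proof of Proposition~\ref{P-cond}, which for $n=3$ reads
$$
f(x,y,z)=x\ast f(\overline{e},y,\overline{e})\ast z,
$$
where $\ast$ is the binary operation of $\hat G=Ret_e(G,f)$. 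Applying the homomorphism $\Gamma$ turns this into $\Gamma(f(x,y,z))=\Gamma(x)\,\Gamma(f(\overline{e},y,\overline{e}))\,\Gamma(z)$, so the whole statement reduces to the single equality $\Gamma(f(\overline{e},y,\overline{e}))=\Gamma(y)$.

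To establish that, I would combine the formula \eqref{inverse} for inverses in the retract with D\"ornte's involutivity $\overline{\overline{x}}=x$, valid in ternary groups. Specializing \eqref{inverse} to $n=3$ and $a=e$ yields $w^{-1}=f(\overline{e},\overline{w},\overline{e})$ for every $w\in G$, the inverse being taken in $Ret_e(G,f)$; choosing $w=\overline{y}$ and using $\overline{\overline{y}}=y$ gives $f(\overline{e},y,\overline{e})=(\overline{y})^{-1}$. Since $\Gamma$ respects inverses and $\Gamma(\overline{y})=\Gamma(y)^{-1}$ by hypothesis,
$$
\Gamma(f(\overline{e},y,\overline{e}))=\Gamma\big((\overline{y})^{-1}\big)=\Gamma(\overline{y})^{-1}=\big(\Gamma(y)^{-1}\big)^{-1}=\Gamma(y),
$$
as required. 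I do not expect a genuine obstacle; the only points that need care are that being a ``representation of $Ret_e(G,f)$'' already carries the full force of being a group homomorphism (so $\Gamma$ preserves the identity and inverses for free), and that the step $\overline{\overline{x}}=x$ is special to the ternary case --- it is precisely what makes the exponent $2-n$ of Proposition~\ref{P-rep} collapse to $-1$ here but not for $n>3$.
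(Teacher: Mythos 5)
Your proof is correct and follows essentially the same route as the paper: the forward direction is Proposition~\ref{P-rep} with $n=3$, and the backward direction reduces via Proposition~\ref{P-cond} to showing $\Gamma(f(\overline{e},x,\overline{e}))=\Gamma(x)$, which both you and the authors obtain from formula \eqref{inverse} together with $\overline{\overline{x}}=x$ and the hypothesis $\Gamma(\overline{x})=\Gamma(x)^{-1}$. Your extra remarks on the kernel clause and on why the argument is special to $n=3$ are accurate but not needed beyond what the paper records.
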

\begin{proof}
From Proposition \ref{P-cond} it follows that
$\Gamma:Ret_e(G,f)\rightarrow GL(V)$ is a representation of a
ternary group $(G,f)$ iff it satisfies the identity
$$
\Gamma(f(\overline{e},x,\overline{e}\,))=\Gamma(x).
$$
If $\Gamma(\overline{x})=\Gamma(x)^{-1}$ holds for all $x\in G$,
then, in view of \eqref{inverse}, for all $x\in G$ we have
\[
\Gamma(f(\overline{e},x,\overline{e}\,))=\Gamma(f(\overline{e},\overline{\overline{x}},\overline{e}\,))=
\Gamma(\overline{x}^{\,-1})=\Gamma(\overline{x})^{-1}=\Gamma(x).
\]
Hence $\Gamma$ is a representation of $(G,f)$.

The converse statement is a consequence of Proposition
\ref{P-rep}.
\end{proof}

\begin{remark}
We can use the above proposition to obtain some deeper results in
the case when $G$ has a central element. Note that, according to
\cite{DM'84}, an $n$-ary group $(G,f)$ has a central element iff
it is $b$-derived from a binary group $(G,\cdot)$ and $b\in
Z(G,\cdot)$. Obviously, in this case $Z(G,f)=Z(G,\cdot)$.
\end{remark}

\begin{proposition}
Let $e$ be a central element of an $n$-ary group
$(G,f)=der_b(G,\cdot)$. Then a representation
$\Gamma:Ret_e(G)\rightarrow GL(V)$ is a representation of
$(G,f)$ iff
$$\Gamma(x_2x_3\ldots
x_ne^{2-n})=\Gamma(x_2)\Gamma(x_3)\ldots\Gamma(x_n)
$$
for all $x_2,\ldots,x_n\in G$.
\end{proposition}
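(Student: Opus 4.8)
The plan is to work directly inside the binary group $(G,\cdot)$. Since $(G,f)=der_b(G,\cdot)$ with $b\in Z(G,\cdot)$ we have $f(x_1^n)=x_1\cdot x_2\cdots x_n\cdot b$, and by the preceding remark $e\in Z(G,f)=Z(G,\cdot)$, so both $e$ and $b$ are central and may be moved through any product. First I would record two facts about $Ret_e(G)$: its operation is $x\ast y=f(x,\stackrel{(n-2)}{e},y)=x\cdot y\cdot e^{n-2}\cdot b$, and its identity element $\overline{e}$ equals $e^{2-n}\cdot b^{-1}$, which is checked from $f(\stackrel{(n-1)}{e},\overline{e})=e^{n-1}\cdot e^{2-n}\cdot b^{-1}\cdot b=e$. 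Because $\Gamma$ is by hypothesis a homomorphism of the group $Ret_e(G)$, we get for free that $\Gamma(\overline{e})=I$ and $\Gamma(x\ast y)=\Gamma(x)\Gamma(y)$ for all $x,y\in G$; these are the only two properties of $\Gamma$ the argument will use.

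For the forward implication, assume $\Gamma$ is a representation of $(G,f)$, i.e. $\Gamma(f(x_1^n))=\Gamma(x_1)\cdots\Gamma(x_n)$ for all $x_1,\ldots,x_n$. I would substitute $x_1=\overline{e}$ and compute, using centrality of $e$ and $b$,
$$f(\overline{e},x_2,\ldots,x_n)=\overline{e}\cdot x_2\cdots x_n\cdot b=e^{2-n}\cdot b^{-1}\cdot x_2\cdots x_n\cdot b=x_2\cdots x_n\cdot e^{2-n}.$$
Then $\Gamma(x_2\cdots x_n\,e^{2-n})=\Gamma(\overline{e})\Gamma(x_2)\cdots\Gamma(x_n)=\Gamma(x_2)\cdots\Gamma(x_n)$, which is exactly the asserted identity.

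For the converse, assume $\Gamma(x_2\cdots x_n\,e^{2-n})=\Gamma(x_2)\cdots\Gamma(x_n)$ for all $x_2,\ldots,x_n\in G$ and fix arbitrary $x_1,\ldots,x_n$. Grouping the last $n-1$ factors and applying the hypothesis gives
$$\Gamma(x_1)\Gamma(x_2)\cdots\Gamma(x_n)=\Gamma(x_1)\,\Gamma(x_2\cdots x_n\,e^{2-n})=\Gamma\!\big(x_1\ast(x_2\cdots x_n\,e^{2-n})\big),$$
the last equality by the $Ret_e(G)$-homomorphism property. It remains to observe, again by centrality of $e$, that
$$x_1\ast(x_2\cdots x_n\,e^{2-n})=x_1\cdot(x_2\cdots x_n\,e^{2-n})\cdot e^{n-2}\cdot b=x_1\cdot x_2\cdots x_n\cdot b=f(x_1^n).$$
Hence $\Gamma(f(x_1^n))=\Gamma(x_1)\cdots\Gamma(x_n)$; and since $\Gamma(\overline{e})=I$, the kernel of $\Gamma$ is non-empty, so $\Gamma$ is a representation of $(G,f)$ in the required sense.

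There is no serious obstacle here: the entire argument is bookkeeping with the two central elements $e$ and $b$. The points that need a little care are getting the exponent $2-n$ right — equivalently, the closed form $\overline{e}=e^{2-n}b^{-1}$ for the retract identity — remembering that $\Gamma(\overline{e})=I$ because $\overline{e}$ is the identity of $Ret_e(G)$, and verifying the technical condition that $\ker\Gamma$ is non-empty. Alternatively one could deduce the statement from Proposition \ref{P-cond} by expanding $f(\overline{e},x_2^{n-1},\overline{e}\,)$ inside $(G,\cdot)$, but the direct computation above is shorter and avoids the reindexing such a reduction would require.
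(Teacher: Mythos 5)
Your proof is correct and follows essentially the same route as the paper: both arguments rest on the single observation that $x_1\ast(x_2\cdots x_n e^{2-n})=x_1x_2\cdots x_n b=f(x_1^n)$ in $Ret_e(G)$, so that $\Gamma(f(x_1^n))=\Gamma(x_1)\Gamma(x_2\cdots x_n e^{2-n})$ always holds and the stated identity is exactly what remains. Your substitution $x_1=\overline{e}$ in the forward direction is just a specific instance of cancelling the invertible factor $\Gamma(x_1)$, and your explicit check that $\overline{e}\in\ker\Gamma$ is a small tidiness bonus over the paper's version.
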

\begin{proof}
Since $(G,f)=der_b(G,\cdot)$ the binary operation in $Ret_e(G,f)$
has the form
$$
x\ast y=f(x,\stackrel{(n-2)}{e},y)=xye^{n-2}b.
$$
For a representation $\Gamma$ of $Ret_e(G,f)$, we have
\begin{equation}\label{dag}
\Gamma(x\ast y)=\Gamma(x)\Gamma(y) .
\end{equation}
Now, for $\Gamma$ to be a representation of $(G,f)$, it is
necessary and sufficient that
$$
\Gamma(f(x_1^n))=\Gamma(x_1x_2\ldots
x_nb)=\Gamma(x_1)\Gamma(x_2)\ldots\Gamma(x_n).
$$
If we replace in \eqref{dag}, $y$ by $x_2\ldots x_ne^{2-n}$, we
obtain
$$
\Gamma(x_1x_2\ldots x_nb)=\Gamma(x_1)\Gamma(x_2\ldots x_ne^{2-n}).
$$
So $\Gamma$ is a representation of $(G,f)$, iff
$$
\Gamma(x_2x_3\ldots x_ne^{2-n})=\Gamma(x_2)\Gamma(x_3)\ldots
\Gamma(x_n)
$$
for all $x_2,\ldots,x_n\in G$.
\end{proof}

In an $n$-ary group $(G,f)=der_b(G,\cdot)$ we have
$\overline{x}=x^{2-n}b^{-1}$. Hence, comparing the above result
with Proposition \ref{P-rep} we obtain

\begin{corollary}
Let $e$ be a central element of an $n$-ary group
$(G,f)=der_b(G,\cdot)$. If a representation $\Gamma:Ret_e(G)
\rightarrow GL(V)$ is a representation of $(G,f)$, then
$\Gamma(x^{2-n}b^{-1})=\Gamma(x)^{2-n}$ for every $x\in G$.
\end{corollary}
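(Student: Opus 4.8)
The plan is to read the corollary as a one-line consequence of Proposition~\ref{P-rep}, once the skew operation of $(G,f)=der_b(G,\cdot)$ is written out explicitly in the binary group $(G,\cdot)$.

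First I would record the formula $\overline{x}=x^{2-n}b^{-1}$ that is stated in the text just before the corollary. It is obtained directly from the definition of the skew element: $\overline{x}$ is the unique solution $z$ of $f(\stackrel{(n-1)}{x},z)=x$, and since $f$ is $b$-derived this equation reads $x^{n-1}\cdot z\cdot b=x$ in $(G,\cdot)$. Left-multiplying by $x^{-(n-1)}$ gives $z\cdot b=x^{2-n}$, and since $b\in Z(G,\cdot)$ we may right-multiply by $b^{-1}$ to conclude $z=x^{2-n}b^{-1}$. (If one prefers, this is exactly the sentence preceding the statement, so it can simply be quoted.)

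Next I would invoke Proposition~\ref{P-rep}, whose hypothesis is literally the hypothesis here: $\Gamma\colon Ret_e(G,f)\to GL(V)$ is a representation of the binary retract that is at the same time a representation of the $n$-ary group $(G,f)$. Its conclusion is $\Gamma(\overline{x})=\Gamma(x)^{2-n}$ for every $x\in G$. Substituting $\overline{x}=x^{2-n}b^{-1}$ from the previous step into the left-hand side yields $\Gamma(x^{2-n}b^{-1})=\Gamma(x)^{2-n}$, which is the assertion. Here one should note only that the underlying set of $Ret_e(G,f)$ is $G$ itself, so $\Gamma$ may legitimately be evaluated at the element $x^{2-n}b^{-1}\in G$ coming from the multiplication in $(G,\cdot)$, and that $\Gamma(x)^{2-n}$ means the $(n-2)$-nd power of the invertible operator $\Gamma(x)^{-1}$.

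There is no genuine obstacle: the mathematical content of the corollary already resides in Proposition~\ref{P-rep}, and the corollary merely re-expresses that conclusion in the coordinates furnished by the D\"ornte/Hossz\'u--Gluskin description of $b$-derived $n$-ary groups. The only thing to keep an eye on is the bookkeeping of the exponent $2-n$ and the use of the centrality of $b$ in the computation of $\overline{x}$.
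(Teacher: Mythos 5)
Your proposal is correct and is exactly the paper's route: the paper states $\overline{x}=x^{2-n}b^{-1}$ in the sentence immediately preceding the corollary and then deduces the corollary by "comparing with Proposition \ref{P-rep}," i.e., substituting this formula into $\Gamma(\overline{x})=\Gamma(x)^{2-n}$. Your explicit derivation of the skew-element formula from $x^{n-1}\cdot z\cdot b=x$ using the centrality of $b$ is a correct filling-in of the detail the paper leaves implicit.
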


In the case of ternary groups, by Corollary \ref{C-ter}, we obtain
stronger result.

\begin{corollary}
Let $(G,f)=der_b(G,\cdot)$ be a ternary group. Then a
representation $\Gamma:Ret_e(G,f)\rightarrow GL(V)$ is also a
representation of $(G,f)$, iff $\Gamma((bx)^{-1})=\Gamma(x)^{-1}$
for every $x\in G$.
\end{corollary}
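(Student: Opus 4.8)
The plan is to derive this directly from Corollary \ref{C-ter} together with the explicit form of the skew element in a derived ternary group. First I would recall that for $(G,f)=der_b(G,\cdot)$ the defining operation is $f(x,y,z)=x\cdot y\cdot z\cdot b$ with $b\in Z(G,\cdot)$, and that the general formula $\overline{x}=x^{2-n}b^{-1}$ (used just above for arbitrary $n$) specializes, for $n=3$, to $\overline{x}=x^{-1}b^{-1}$. Since $b$ is central this equals $(bx)^{-1}$, so in this ternary group we simply have $\overline{x}=(bx)^{-1}$ for every $x\in G$.

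Next I would invoke Corollary \ref{C-ter}, which states that a representation $\Gamma:Ret_e(G,f)\rightarrow GL(V)$ is also a representation of the ternary group $(G,f)$ if and only if $\Gamma(\overline{x})=\Gamma(x)^{-1}$ for every $x\in G$. Substituting the identity $\overline{x}=(bx)^{-1}$ obtained in the previous step, the condition $\Gamma(\overline{x})=\Gamma(x)^{-1}$ becomes precisely $\Gamma((bx)^{-1})=\Gamma(x)^{-1}$ for all $x\in G$, which is the claimed criterion. Since $x\mapsto bx$ is a bijection of $G$, nothing is lost in either direction of the equivalence, so the two conditions are literally the same.

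I do not anticipate a genuine obstacle here; the only point requiring a little care is the verification of the skew-element formula $\overline{x}=x^{-1}b^{-1}$ in the derived case, i.e. checking that $f(x,x,z)=x$ with $f(x,y,z)=xyzb$ forces $z=x^{-1}b^{-1}$: from $x\cdot x\cdot z\cdot b=x$ one cancels and uses centrality of $b$ to get $z=x^{-1}b^{-1}$. One should also note that this formula was already recorded in the paragraph preceding the previous corollary, so the statement is essentially an immediate restatement of Corollary \ref{C-ter} in the language of the underlying binary group $(G,\cdot)$.
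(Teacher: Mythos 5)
Your proof is correct and follows exactly the route the paper intends: the paper derives this corollary by combining Corollary \ref{C-ter} with the observation that in $der_b(G,\cdot)$ one has $\overline{x}=x^{2-n}b^{-1}$, which for $n=3$ gives $\overline{x}=x^{-1}b^{-1}=(bx)^{-1}$, so the condition $\Gamma(\overline{x})=\Gamma(x)^{-1}$ is literally the stated one. (A trivial remark: centrality of $b$ is not even needed to cancel in $x\cdot x\cdot z\cdot b=x$, since $(bx)^{-1}=x^{-1}b^{-1}$ holds in any group.)
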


\begin{proposition}
Let $e$ be a central element of an ternary group
$(G,f)=der_b(G,\cdot)$. Then a character $\chi$ of $Ret_e(G,f)$ is
a character of $(G,f)$ iff for all $x\in G$ we have
$\chi(\bar{x})=\overline{\chi(x)}$.
\end{proposition}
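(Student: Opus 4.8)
The plan is to reduce the statement, via the representation-level criteria just proved, to whether a single central element acts trivially, and to settle that by character theory. Write $\chi = Tr\,\Gamma$ for a representation $\Gamma$ of the binary group $Ret_e(G,f)$; since $G$ is finite we may take $\Gamma$ unitary, so each $\Gamma(x)$ is diagonalizable with roots of unity on its diagonal and $\overline{\chi(x)} = Tr\,\Gamma(x)^{-1} = \chi(x^{-1})$, with $x^{-1}$ the inverse of $x$ in $Ret_e(G,f)$ given by \eqref{inverse}. Because $e,b \in Z(G,\cdot)$, the Proposition preceding this one says that $\Gamma$ is a representation of $(G,f)$ exactly when $\Gamma(x_2\cdots x_n e^{2-n}) = \Gamma(x_2)\cdots\Gamma(x_n)$ for all $x_i$; comparing its two sides, and using that $Ret_e(G,f)$ is $(G,\cdot)$ twisted by the central element $c = e^{n-2}b$, this criterion collapses to $\Gamma(w) = id$ for the single central element $w = e^{(2-n)(n-1)}b^{2-n}$, while the Corollary that follows records $\Gamma(\overline{x}) = \Gamma(x)^{2-n}$, which for ternary groups reads $\Gamma(\overline{x}) = \Gamma(x)^{-1}$ (Corollary~\ref{C-ter}).

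For ternary groups the argument is short. If $\chi$ is a character of $(G,f)$, pick such a unitary $\Gamma$; then $\Gamma(\overline{x}) = \Gamma(x)^{-1}$ forces $\chi(\overline{x}) = Tr\,\Gamma(x)^{-1} = \overline{\chi(x)}$. Conversely, assume $\chi(\overline{x}) = \overline{\chi(x)} = \chi(x^{-1})$ for all $x$; substituting $\overline{x} = x^{-1}b^{-1}$ (the skew in a ternary $der_b(G,\cdot)$) and $x^{-1} = f(\overline e,\overline x,\overline e)$ from \eqref{inverse}, the identity rearranges to $\chi(y) = \chi(yz)$ for all $y$, where $z$ is a fixed central element, identified with $w = e^{-2}b^{-1}$. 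Passing to the irreducible decomposition $\Gamma = \bigoplus_i \Gamma_i$ and applying Schur's lemma, $\Gamma_i(z) = \lambda_i\,id$ with $|\lambda_i| = 1$, and linear independence of the irreducible characters of $Ret_e(G,f)$ forces every $\lambda_i = 1$ (a multiple constituent causes no trouble, since $\lambda_i$ is read off from $\chi_i$ at $z$); hence $\Gamma(w) = id$, so $\Gamma$ is a representation of $(G,f)$ and $\chi$ a character of it.

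The step I expect to be the real obstacle is extending this to general $n$. Running the same scheme, the forward direction now only gives $\chi(\overline{x}) = Tr\,\Gamma(x)^{2-n}$, which differs from $\overline{\chi(x)} = Tr\,\Gamma(x)^{-1}$ by $Tr\,\Gamma(x)^{n-3}$; and in the converse, after the substitutions the hypothesis compares $\chi$ at two elements differing by $x^{3-n}$ times a constant rather than by a constant, so it no longer collapses to the single central relation $\Gamma(w) = id$. Reconciling the exponent $2-n$ with $-1$ — equivalently, $n-3$ with $0$ — is automatic only when $n = 3$, so for $n > 3$ the proof must either establish $\Gamma(x)^{n-3} = id$ for every $x \in G$ from the structure of $(G,f) = der_b(G,\cdot)$, or argue entirely through the central condition $\Gamma(w) = id$ together with the orthogonality relations for the characters of $Ret_e(G,f)$; and it is worth checking whether the equivalence in fact requires an extra hypothesis outside the ternary case.
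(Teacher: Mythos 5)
Your ternary argument is correct and lands where the paper's does. The forward direction is identical (Corollary~\ref{C-ter} plus unitarity of $\Gamma$, so that $\chi(\overline{x})=Tr\,\Gamma(x)^{-1}=\overline{\chi(x)}$). In the converse you reach the same target --- $e\in\ker\Gamma$, equivalently $\Gamma(w)=id$ for your single central element --- but by a longer route: you use the hypothesis at every $x$, decompose into irreducibles, and invoke Schur's lemma together with linear independence of the irreducible characters of $Ret_e(G,f)$ (and you are a little cavalier about whether products like $yz$ are taken in $(G,\cdot)$ or in $Ret_e(G,f)$, though centrality of $e$ and $b$ lets this be repaired). The paper gets there in one line by evaluating the hypothesis only at $x=e$: since $\chi(\overline{e})=\dim V$ is real, $\chi(\overline{e})=\overline{\chi(e)}$ forces $\chi(e)=\dim V$, hence $e\in\ker\chi=\ker\Gamma$; then $x\ast\overline{x}=f(x,e,\overline{x})=xe x^{-1}b^{-1}b=e$ gives $\Gamma(\overline{x})=\Gamma(x)^{-1}$ and Corollary~\ref{C-ter} finishes. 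So for $n=3$ your proof is right but can be substantially shortened.

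The obstacle you flag for $n>3$ is genuine, and it is not a defect of your approach: the paper's own proof is silently a ternary proof --- it writes $f(x,e,\overline{x})$ and uses $\Gamma(\overline{x})=\Gamma(x)^{-1}$, whereas Proposition~\ref{P-rep} only gives $\Gamma(\overline{x})=\Gamma(x)^{2-n}$ --- and the statement as written is in fact false for $n>3$. Concretely, take $n=4$, $(G,f)=der(\mathbb{Z}_6,\cdot)$ with generator $g$ (so $b=1$, $\overline{x}=x^{-2}$), $e=g$, and the one-dimensional $\Gamma(g^t)=\omega^{t+2}$ with $\omega=e^{2\pi i/6}$. This is a character of $Ret_e(G,f)$ (where $x\ast y=xg^2y$), and it is a representation of $(G,f)$ because $\omega^{\sum t_i+2}=\omega^{\sum t_i+8}$ and $g^4\in\ker\Gamma$; yet $\chi(\overline{g^t})=\omega^{2-2t}$ while $\overline{\chi(g^t)}=\omega^{-t-2}$, and these differ already at $t=0$. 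So no additional argument (such as forcing $\Gamma(x)^{n-3}=id$) can close the gap; the proposition needs the hypothesis $n=3$, exactly as you suspected and as the proposition immediately following it in the paper already assumes.
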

\begin{proof}
Let $\Gamma:Ret_e(G,f)\rightarrow GL(V)$ be a representation
corresponding to $\chi$. If $\chi$ is a character of $(G,f)$, then
$\Gamma$ is also a representation of $(G,f)$ and so $
\Gamma(\bar{x})=\Gamma(x)^{-1}.$ Hence we have
$\chi(\bar{x})=\overline{\chi(x)}$.

Conversely, if $\chi(\bar{x})=\overline{\chi(x)}$ holds for all
$x\in G$, then in particular $\overline{\chi(e)}=\chi(\bar{e})$.
Thus $\chi(e)=\chi(\bar{e})$ because $\chi(\overline{e})$ is real.
Now, for all $x\in G$, we have
$x\ast\bar{x}=f(x,e,\bar{x})=f(e,x,\bar{x})=e$, so
$\chi(x*\bar{x})=\chi(e)=\chi(\bar{e})$. Hence,
$$
x\ast\bar{x}\in \ker \chi=\ker \Gamma.
$$
This shows that $\Gamma(x^{-1})=\Gamma(\bar{x})$ and so $\Gamma$ is a representation of $G$.
Hence $\chi$ is also a character of $G$.
\end{proof}

\begin{proposition}
Let $e$ be a central element of a ternary group
$(G,f)=der_b(G,\cdot)$. If $\chi$ is a common character of $(G,f)$
and $Ret_e(G,f)$, then $\hat{\chi}=\chi$.
\end{proposition}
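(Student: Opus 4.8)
The plan is to reduce everything to the single observation that the element $e$ belongs to $\ker\chi$, and then to read off the conclusion from the explicit description of $\hat\chi$ already established, namely that $\hat\chi(x)=\chi(f(\stackrel{(n-2)}{e},x,\overline p\,))$ for any $p\in\ker\chi$. In the ternary case this formula reads $\hat\chi(x)=\chi(f(e,x,\overline p\,))$, so once we know $e\in\ker\chi$ we may take $p=e$ and obtain $\hat\chi(x)=\chi(f(e,x,\overline e\,))$. Since $(G,f)=der_b(G,\cdot)$ with $b$ central we have $f(u,v,w)=u\cdot v\cdot w\cdot b$ and $\overline e=e^{-1}b^{-1}$, hence $f(e,x,\overline e\,)=e\cdot x\cdot e^{-1}b^{-1}\cdot b=e\cdot x\cdot e^{-1}=x$, the last equality holding because $e$ lies in the center of $(G,\cdot)$. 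This gives $\hat\chi(x)=\chi(x)$ for all $x\in G$.

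It remains to justify that $e\in\ker\chi$, and this is where the hypothesis that $\chi$ is a \emph{common} character of $(G,f)$ and $Ret_e(G,f)$ enters. Being simultaneously a character of $Ret_e(G,f)$ and of $(G,f)$, the function $\chi$ satisfies $\chi(\overline x)=\overline{\chi(x)}$ for every $x\in G$, by the proposition giving the criterion for a character of $Ret_e(G,f)$ to be a character of $(G,f)$. Evaluate this identity at $x=e$: the element $\overline e$ is the identity of the binary group $Ret_e(G,f)$, and $\chi$ is an ordinary character of that group, so $\chi(\overline e)=\dim V$, a real number. Therefore $\overline{\chi(e)}=\chi(\overline e)=\dim V$, whence $\chi(e)=\dim V$, and by the very definition of $\ker\chi$ this means $e\in\ker\chi$.

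The only substantive step is this last one, showing $e\in\ker\chi$; everything else is a short computation in the group $(G,\cdot)$. An equivalent finish, avoiding the $\hat\chi$-formula, is available: choose a representation $\Gamma$ of $Ret_e(G,f)$ affording $\chi$; the argument in the proof of the proposition used above shows $\Gamma$ is then also a representation of $(G,f)$, and since $\Gamma(\overline e)=id$ while Proposition~\ref{P-rep} forces $\Gamma(\overline e)=\Gamma(e)^{2-n}=\Gamma(e)^{-1}$, we get $\Gamma(e)=id$; consequently $\hat\chi(x)=Tr\big(\Gamma(e)^{n-2}\Gamma(x)\big)=Tr\,\Gamma(x)=\chi(x)$.
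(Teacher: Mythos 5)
Your proof is correct and follows essentially the same route as the paper: first deduce $e\in\ker\chi$ from $\chi(\overline{e})=\overline{\chi(e)}$ being real, then set $p=e$ in the formula $\hat\chi(x)=\chi(f(e,x,\overline{p}\,))$ and check $f(e,x,\overline{e}\,)=x$. The only cosmetic difference is that you verify this last identity by direct computation in $der_b(G,\cdot)$, while the paper uses centrality of $e$ together with D\"ornte's identity $f(x,e,\overline{e}\,)=x$.
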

\begin{proof} We have $\chi(\bar{e})=\overline{\chi(e)}$, so $\chi(e)$ is real, and hence $\chi(e)=\chi(\bar{e})$. So
$e\in \ker \chi$. Now, suppose $p=e$. Then
\[
\hat{\chi}(x)=
\chi(f(e,x,\bar{p}))=\chi(f(e,x,\bar{e}))=\chi(f(x,e,\bar{e}))=\chi(x),
\]
which completes the proof.
\end{proof}

In the remaining part of this section, we try to answer the
problem: when $\hat{\Lambda_1}\sim \hat{\Lambda_2}$? We give an answer to
this question for $n$-ary groups with some central elements.

\begin{proposition}
For an $n$-ary group $(G,f)$ with a central element $e$ the
following assertions are true:
\begin{enumerate}
\item \ Let $(V, p)$ be a $G$-module and $h:V\rightarrow V$ be a
$\hat{G}$-homomorphism. Then $h$ is also a $G$-homomorphism.

\item \ Let $(V_1, p_1)$ and $(V_2,p_2)$ be two $G$-modules and
$h:V_1\rightarrow V_2$ be a $\hat{G}$-homomorphism. Then $h$ is a
$G$-homomorphism, iff $h(e.v)=e.h(v)$.

\item \ Let $(V_1, p_1)$ and $(V_2,p_2)$ be two $G$-modules and
$h:V_1\rightarrow V_2$ be a $\hat{G}$-homomorphism. Then $h$ is a
$G$-homomorphism, iff $p_1.h(v)=h(v)$ for every $v\in V_1$.

\item \ Let $(V_1,p_1)$ and $(V_2,p_2)$ be two $G$-modules and
$$
V_1\cong_{\hat{G}} V_2.
$$
Then $V_1\cong_G V_2$, iff for all $u\in V_2$, $p_1.u=u$.
\end{enumerate}
\end{proposition}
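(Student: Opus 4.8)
The plan is to translate everything into the language of the intertwining operators and then reduce all four assertions to one short computation. For the $G$-module $(V_i,p_i)$ write $\Lambda_i\colon G\to GL(V_i)$ for the associated representation, so that $x.v=\Lambda_i(x)v$; since $p_i.v=v$ for all $v$ we have $\Lambda_i(p_i)=I$, and each $\Lambda_i(e)$ lies in $GL(V_i)$, hence is invertible. As computed above for $\hat{G}=Ret_e(G)$, the induced $\hat{G}$-module structure on $V_i$ has intertwiners $\hat\Lambda_i(x)=\Lambda_i(e)^{n-2}\Lambda_i(x)$ (one uses $\overline{p}_i.v=v$). Consequently a linear map $h\colon V_1\to V_2$ is a $G$-homomorphism iff $h\Lambda_1(x)=\Lambda_2(x)h$ for all $x\in G$, and a $\hat{G}$-homomorphism iff $h\,\Lambda_1(e)^{n-2}\Lambda_1(x)=\Lambda_2(e)^{n-2}\Lambda_2(x)\,h$ for all $x\in G$.

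I would prove $(2)$ first, as it is the core of the matter. Assume $h$ is a $\hat{G}$-homomorphism. The forward implication is merely the case $x=e$ of the intertwining relation, rewritten as $h(e.v)=e.h(v)$. For the converse, from $h\Lambda_1(e)=\Lambda_2(e)h$ a one-line induction gives $h\Lambda_1(e)^{k}=\Lambda_2(e)^{k}h$ for every $k\geqslant 0$, in particular $h\Lambda_1(e)^{n-2}=\Lambda_2(e)^{n-2}h$; feeding this into $h\,\Lambda_1(e)^{n-2}\Lambda_1(x)=\Lambda_2(e)^{n-2}\Lambda_2(x)\,h$ turns the left side into $\Lambda_2(e)^{n-2}\,h\Lambda_1(x)$, and cancelling the invertible operator $\Lambda_2(e)^{n-2}$ leaves $h\Lambda_1(x)=\Lambda_2(x)h$ for every $x$, i.e., $h$ is a $G$-homomorphism.

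The remaining parts then follow cheaply from $(2)$. For $(3)$, the forward implication uses only $\Lambda_1(p_1)=I$: indeed $h=h\Lambda_1(p_1)=\Lambda_2(p_1)h$, which says $p_1.h(v)=h(v)$. For the converse, evaluate the $\hat{G}$-intertwining relation at $x=p_1$ and at $x=e$: since $\Lambda_1(p_1)=I$ and, by hypothesis, $\Lambda_2(p_1)h=h$, the first gives $h\Lambda_1(e)^{n-2}=\Lambda_2(e)^{n-2}h$, while the second gives $h\Lambda_1(e)^{n-1}=\Lambda_2(e)^{n-1}h$; combining these two and cancelling $\Lambda_2(e)^{n-2}$ yields $h\Lambda_1(e)=\Lambda_2(e)h$, and $(2)$ finishes. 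Assertion $(1)$ is the case $V_1=V_2=V$, $p_1=p_2=p$ of the ``if'' direction of $(3)$, because there the condition ``$p_1.u=u$ for all $u\in V_2$'' becomes ``$p.u=u$ for all $u\in V$'', which holds by definition of the $G$-module $(V,p)$; hence every $\hat{G}$-endomorphism of $V$ is a $G$-endomorphism. Finally, for $(4)$: if $p_1.u=u$ for all $u\in V_2$, i.e., $\Lambda_2(p_1)=I$, then any $\hat{G}$-isomorphism $h\colon V_1\to V_2$ satisfies the hypothesis of $(3)$ trivially, so it is a $G$-homomorphism, hence a $G$-isomorphism, and $V_1\cong_G V_2$; conversely a $G$-isomorphism $g\colon V_1\to V_2$ satisfies $\Lambda_2(p_1)g=g\Lambda_1(p_1)=g$, and surjectivity of $g$ forces $\Lambda_2(p_1)=I$, that is, $p_1.u=u$ for all $u\in V_2$.

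The only genuinely substantive step is the reduction inside $(2)$: recognising that the $\hat{G}$-intertwining relation together with commutation with the single invertible operator $\Lambda_i(e)$ already forces full $G$-intertwining, which works precisely because $\hat\Lambda_i$ differs from $\Lambda_i$ only by the unit left factor $\Lambda_i(e)^{n-2}$. Everything else is bookkeeping, and the one point that needs care is that, although $\Lambda_i(p_i)=I$, the operator $\Lambda_2(p_1)$ need not be the identity — and this operator is exactly what $(3)$ and $(4)$ detect. The hypothesis that $e$ be central is convenient, but the argument above in fact uses only $\Lambda_i(p_i)=I$ and the invertibility of $\Lambda_i(e)$; alternatively, in $(1)$ one may note directly from $e\in Z(G,f)$ that $\Lambda(e)$ commutes with the whole image of $\Lambda$ and conclude from that.
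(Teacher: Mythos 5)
Your proof is correct, and it reaches the result by a genuinely different route from the paper's. The paper argues inside the polyadic group itself: for part $(1)$ it produces the explicit element $f(e,\stackrel{(n-2)}{p},\overline{e}\,)$ of $\hat{G}$ whose $\circ$-action on $V$ agrees with $v\mapsto e.v$, a construction that uses associativity \emph{and} the centrality of $e$ to shuffle the $e$'s past the $p$'s; commutation of $h$ with $v\mapsto e.v$ then drops out of $\hat{G}$-equivariance, and the final step cancels the bijection $u\mapsto e.u$. Part $(3)$ is handled by a second computation of the same kind, and $(2)$, $(4)$ are referred back to these. You instead work entirely at the operator level: from $\hat{\Lambda}_i=\Lambda_i(e)^{n-2}\Lambda_i$ and $\Lambda_1(p_1)=I$, evaluating the $\hat{G}$-intertwining relation at $x=p_1$ (together with $\Lambda_2(p_1)h=h$, which is automatic in $(1)$) and at $x=e$ yields $h\Lambda_1(e)=\Lambda_2(e)h$, and the invertibility of $\Lambda_2(e)$ then strips the factor $\Lambda_2(e)^{n-2}$ from the $\hat{G}$-relation to give full $G$-equivariance. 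Both proofs pivot on the same fact --- $h$ commutes with the action of $e$ --- but you extract it from $\Lambda_1(p_1)=I$ alone rather than from a polyadic identity. This is shorter, makes the logical dependencies among $(1)$--$(4)$ transparent ($(1)\Leftarrow(3)\Leftarrow(2)$), and, as you note, shows that the centrality hypothesis is not needed at all; the paper's own later remark only weakens centrality to semiabelianness, so your observation is a genuine (if mild) strengthening. The one point worth stating explicitly in a write-up is that ``$G$-homomorphism'' is taken in the intertwiner sense $h\Lambda_1(x)=\Lambda_2(x)h$ for all $x\in G$, which is how the paper's ``defined in the ordinary way'' must be read, and that $\Lambda_2(p_1)$ need not be the identity even though $\Lambda_2(p_2)$ is --- a distinction you correctly flag as exactly what $(3)$ and $(4)$ measure.
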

\begin{proof} $(1)$. In view of $x\ast y=f(x,\stackrel{(n-2)}{e},y)$, for a $G$-module $(V,p)$, we have
\begin{eqnarray*}
h(e.v)&=&h(f(\stackrel{(n-1)}{e},\overline{e}\,).v)\\
      &=&h(f(f(\stackrel{(n-1)}{e},\overline{e}\,),\stackrel{(n-1)}{p}).v)\\
      &=&h(f(f(e,\stackrel{(n-2)}{p},\overline{e}\,),\stackrel{(n-2)}{e},p).v)\\
      &=&h(f(e,\stackrel{(n-2)}{p},\overline{e})\circ v)\\
      &=&f(e,\stackrel{(n-2)}{p},\overline{e}\,)\circ h(v)\\
      &=&f(f(e,\stackrel{(n-2)}{p},\overline{e}\,),\stackrel{(n-2)}{e},p).h(v)\\
      &=&f(e,\stackrel{(n-2)}{p},f(\overline{e},\stackrel{(n-2)}{e},p)).h(v)\\
      &=&f(e,\stackrel{(n-1)}{p}).h(v)\\
      &=&e.p.\,\ldots\,.p.h(v)\\
      &=&e.h(v).
\end{eqnarray*}
Now for all $x\in G$, we have $h(x\circ v)=x\circ h(v)$, so
$$
h(\underbrace{e.\ \ldots\  .e}_{n-2}. x. v))=e.\ \ldots\  .e. x. h(v).
$$
Hence
$$
\underbrace{e.\,\ldots\,.e}_{n-2}.h(x.v)=e.\,\ldots\,.e.x.h(v).
$$
Since the map $u\mapsto e.u$ is bijection, we have $h(x.v)=x.h(v)$.\\
$(2)$. The proof of this part is just as the above.\\
$(3)$. Suppose $h$ is a $G$-homomorphism. Then
$p_1.h(v)=h(p_1.v)=h(v)$ for every $v\in V_1$.

Conversely, assume that for all $v\in V_1$ holds $p_1.h(v)=h(v)$.
Then

\begin{eqnarray*}
h(e.v)&=&h(f(\stackrel{(n-1)}{e},\overline{e}).\underbrace{p_1.\ \ldots\ .p_1}_{n-2}.v)\\
      &=&h(\underbrace{e.\ \ldots\ .e}_{n-1}.\overline{e}.\underbrace{p_1.\ \ldots\ .p_1}_{n-2}.v)\\
      &=&h(f(e,\stackrel{(n-2)}{p_1},\overline{e}).\underbrace{e.\ \ldots\ .e}_{n-2}.v)\\
      &=&h(f(e, \stackrel{(n-2)}{p_1},\overline{e})\circ v)\\
      &=&f(e, \stackrel{(n-2)}{p_1},\overline{e})\circ h(v)\\
      &=&f(e,\stackrel{(n-2)}{p_1},\overline{e}).\underbrace{e.\ \ldots\ .e}_{n-2}.h(v)\\
      &=&f(e,\stackrel{(n-2)}{p_1},\overline{e}).\underbrace{e.\ \ldots\ .e}_{n-2}.p_1.h(v)\\
      &=&f(f(e,\stackrel{(n-2)}{p_1},\overline{e}),\stackrel{(n-2)}{e},p_1).h(v)\\
      &=&f( e,\stackrel{(n-2)}{p_1}, f( \overline{e},\stackrel{(n-2)}{e},p_1)).h(v)\\
      &=&f(e,\stackrel{(n-1)}{p_1}).h(v)\\
      &=&e.h(v).
\end{eqnarray*}
$(4)$. Let $h:V_1\rightarrow V_2$ be a $G$-isomorphism. Then $h$
is also a $\hat{G}$-homomorphism, and hence $p_1.h=h$. Because $h$
is onto, we obtain $p_1.u=u$, for all $u\in V_2$.

Conversely, suppose $p_1.u=u$, for all $u\in V_2$. Let
$h:V_1\rightarrow V_2$ be a $\hat{G}$-isomorphism. Then $p_1.h=h$,
and so $h$ is a $G$-isomorphism.
\end{proof}

\begin{proposition}
Let $(G,f)$ be an $n$-ary group with a central element and let
$\Lambda_1, \Lambda_2:G\rightarrow GL(V)$ be two representations
of $(G,f)$, such that $\hat{\Lambda}_1\sim \hat{\Lambda}_2$. Then
$\Lambda_1\sim \Lambda_2$, iff $\,\ker \Lambda_1=\ker \Lambda_2$.
\end{proposition}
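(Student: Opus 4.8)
The strategy is to treat the two implications separately. The forward one is immediate: if $\Lambda_1\sim\Lambda_2$, say $\Lambda_2(x)=Q\Lambda_1(x)Q^{-1}$ for all $x\in G$, then $\Lambda_2(x)=id$ precisely when $\Lambda_1(x)=id$, so $\ker\Lambda_1=\ker\Lambda_2$. (This half uses neither finiteness, nor a central element, nor the hypothesis $\hat{\Lambda}_1\sim\hat{\Lambda}_2$.) Everything interesting is in the converse.

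So assume $\hat{\Lambda}_1\sim\hat{\Lambda}_2$ and $\ker\Lambda_1=\ker\Lambda_2$, and fix $P\in GL(V)$ with $\hat{\Lambda}_2(x)=P\hat{\Lambda}_1(x)P^{-1}$ for all $x\in G$. Write $A_i=\Lambda_i(e)$ and recall $\hat{\Lambda}_i(x)=A_i^{\,n-2}\Lambda_i(x)$, equivalently $\Lambda_i(x)=A_i^{\,2-n}\hat{\Lambda}_i(x)$, for every $x$. The idea is that this very $P$ will intertwine $\Lambda_1$ with $\Lambda_2$; indeed, once we know the single matrix identity $A_2^{\,2-n}=PA_1^{\,2-n}P^{-1}$, then for every $x\in G$
$$\Lambda_2(x)=A_2^{\,2-n}\hat{\Lambda}_2(x)=\big(PA_1^{\,2-n}P^{-1}\big)\big(P\hat{\Lambda}_1(x)P^{-1}\big)=P\big(A_1^{\,2-n}\hat{\Lambda}_1(x)\big)P^{-1}=P\Lambda_1(x)P^{-1},$$
whence $\Lambda_1\sim\Lambda_2$.

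To obtain that identity I would extract two instances of $\hat{\Lambda}_2=P\hat{\Lambda}_1P^{-1}$. Taking $x=e$ and using $\hat{\Lambda}_i(e)=A_i^{\,n-1}$ gives $A_2^{\,n-1}=PA_1^{\,n-1}P^{-1}$. Next, since $\ker\Lambda_1=\ker\Lambda_2$ is non-empty (kernels of representations are non-empty by definition), pick $x_0$ in it; from $\Lambda_1(x_0)=\Lambda_2(x_0)=id$ we get $\hat{\Lambda}_i(x_0)=A_i^{\,n-2}$, hence $A_2^{\,n-2}=PA_1^{\,n-2}P^{-1}$. Dividing the two relations — valid because all relevant powers of a single invertible matrix commute and their exponents add — yields $A_2=A_2^{\,n-1}\big(A_2^{\,n-2}\big)^{-1}=PA_1^{\,n-1}P^{-1}\big(PA_1^{\,n-2}P^{-1}\big)^{-1}=PA_1P^{-1}$, so indeed $A_2^{\,2-n}=PA_1^{\,2-n}P^{-1}$.

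The only step calling for a genuine idea is this last one. By itself $\hat{\Lambda}_1\sim\hat{\Lambda}_2$ constrains only the powers $A_i^{\,n-1}=\hat{\Lambda}_i(e)$, which it forces to be $P$-conjugate, and that does not determine $A_2$; a second relation is needed, and the coincidence of the kernels supplies exactly $A_2^{\,n-2}=PA_1^{\,n-2}P^{-1}$, coming from any element killed by both $\Lambda_i$. Combining the exponents $n-1$ and $n-2$ then pins $A_2$ itself down. I would also note that this argument does not actually invoke the central-element hypothesis of the proposition.
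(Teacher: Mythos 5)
Your proof is correct, and it takes a genuinely different route from the paper. The paper argues at the level of modules: it picks $p\in\ker\Lambda_1=\ker\Lambda_2$, regards $V$ with the two actions as $G$-modules $(V_1,p)$ and $(V_2,p)$, and invokes part $(4)$ of the preceding proposition (whose proof is where the central-element hypothesis is actually consumed) to pass from $V_1\cong_{\hat G}V_2$ together with $p.u=u$ to $V_1\cong_G V_2$, and back. You instead work directly with matrices: from $\hat{\Lambda}_i(x)=\Lambda_i(e)^{n-2}\Lambda_i(x)$ you recover the intertwiner for the $\Lambda_i$ from the given intertwiner $P$ for the $\hat{\Lambda}_i$ by evaluating at the two anchor points $x=e$ and $x=x_0\in\ker\Lambda_1=\ker\Lambda_2$ (non-empty by the paper's definition of a representation), obtaining $A_2^{n-1}=PA_1^{n-1}P^{-1}$ and $A_2^{n-2}=PA_1^{n-2}P^{-1}$ and hence $A_2=PA_1P^{-1}$; all steps check out, including the exponent arithmetic $(n-1)-(n-2)=1$. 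What your approach buys is twofold: it is self-contained (no appeal to the module-homomorphism proposition), and it shows the central-element hypothesis is not needed for this particular statement, whereas the paper's route needs it (or, as the authors remark, semiabelianity) because the auxiliary proposition does. Your observation that the forward implication is trivial conjugation-preserves-kernels is also cleaner than the paper's detour through $p.u=u$.
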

\begin{proof}
Let $p\in \ker \Lambda_1=\ker \Lambda_2$. We define two
$G$-modules $V_1$ and $V_2$, as follows: $V_1$ is the vector space
$V$ with the action $x.v=\Lambda_1(x)(v)$, $V_2$ is the vector
space $V$ with the action $x. v=\Lambda_2(x)(v)$. Then
$\hat{\Lambda}_1\sim \hat{\Lambda}_2$ implies
$$
V_1\cong_{\hat{G}}V_2,
$$
and $p. u=u$, for all $u\in V_2$. So, $V_1\cong_G V_2$. This
proves $\Lambda_1\sim \Lambda_2$.

Conversely, let $\Lambda_1\sim \Lambda_2$. Hence, we have
$V_1\cong_G V_2$. By the previous proposition, for $p\in \ker
\Lambda_1$ and $u\in V_2$, we have $p.u=u$. Thus
$\Lambda_2(p)=id$. Therefore, $\ker\Lambda_1=\ker\Lambda_2$.
\end{proof}

\begin{corollary}\label{cc}
Let $\Lambda_1$ and $\Lambda_2$ be two representations of an $n$-ary group
$(G,f)$ with a central element $e$. If
$\hat{\Lambda}_1\sim\hat{\Lambda}_2$, then $\Lambda_1\sim \Lambda_2$ iff
$\Lambda_1(\overline{e})\sim \Lambda_2(\overline{e})$.
\end{corollary}

\begin{proof}  By the above proposition, $\Lambda_1\sim \Lambda_2$, iff $\ker \Lambda_1=\ker \Lambda_2$. But, we have
$$
\ker \Lambda_1=\{ x\in G: \hat{\Lambda}_1(x)=\Lambda_1(e)^{n-2}\},
$$
$$
\ker \Lambda_2=\{ x\in G: \hat{\Lambda}_2(x)=\Lambda_2(e)^{n-2}\}.
$$
Hence $\Lambda_1\sim \Lambda_2$, iff $\Lambda_1(e)^{n-2}\sim \Lambda_2(e)^{n-2}$. But we have $\Lambda_1(e)^{n-2}=\Lambda_1(\overline{e})^{-1}$ and similarly for $\Lambda_2$. So $\Lambda_1\sim \Lambda_2$, iff $\Lambda_1(\overline{e})^{-1}\sim \Lambda_2(\overline{e})^{-1}$.

\end{proof}

\begin{remark}
In the last two propositions and Corollary \ref{cc} the assumption
that $e$ is a central element can be replaced by the assumption
that that an $n$-ary group $(G,f)$ is semiabelian.
\end{remark}

\section{Connection with the representations of the covering group}

According to Post's Coset Theorem (cf. \cite{Post} or
\cite{cover}) for any $n$-ary group $(G,f)$ there exists a binary
group $(G^{\ast},\cdot )$ and its normal subgroup $H$ such that
$\,G^{\ast}\diagup H\simeq\mathbb{Z}_{n-1}\,$ and $G\subseteq G^{\ast}$ and
\[
f(x_1^n)=x_1\cdot x_2\cdot x_3\cdot\ldots\cdot x_n
\]
for all $x_1,\ldots,x_n\in G$.

The group $(G^{\ast},\cdot)$ is called the {\it covering group}
for $(G,f)$. We know several methods of a construction of such
group. The smallest covering group has the form
$G_a^{\ast}=G\times\mathbb{Z}_{n-1}$, where
\[
\langle x,r\rangle\!\cdot\!\langle y,s\rangle = \langle f_{\ast}
(x,\stackrel{(r)}{a},y,\stackrel{(s)}{a},\overline{a},
\stackrel{(n-2-r\diamond s)}{a}),\,r\diamond s\rangle ,
\]
$r\diamond s=(r+s+1)({\rm mod}\,(n-1))$ and $a\in G$ an arbitrary
but fixed element. The symbol $f_{\ast}$ means that the operation
$f$ is used one or two times (depending on the value $s$ and $t$).
Clearly fixing various element $a$ of $G$, we obtain various groups but
all these groups are isomorphic (cf. \cite{cover}).

The element $(\overline{a},n-2)$ is the identity of the group
$(G_a^{\ast}, \cdot)$. The inverse element has the form
$$
\langle x,t\rangle^{-1}= \langle
f_{\ast}(\overline{a},\stackrel{(n-2-t)}{a},\overline{x},\stackrel{(n-3)}{x},\overline{a},\stackrel{(t+1)}{a}),
k\rangle,
$$
where $k=(n-3-t)({\rm mod}\,(n-1))$.

The set $G$ is identified with the subset $\{\langle x,0\rangle:
x\in G\}$. Every retract of $(G,f)$ is isomorphic to the normal
subgroup
$$
H=\{\langle x,n-2\rangle: x\in G\}.
$$

Suppose that $V$ is a $G^{\ast}_a$-module. Then for
$x_1,\ldots,x_n\in G$ we have
\begin{eqnarray*}
x_1.x_2.x_3.\,\ldots\,.x_n.v&=&\langle x_1,0\rangle .\langle x_2,0\rangle .\langle x_3,0\rangle .\,\ldots\,.
\langle x_n,0\rangle .v\\
&=&\langle f(x_1,x_2,\overline{a},\stackrel{(n-3)}{a}),1\rangle .\langle x_3,0\rangle .\,\ldots\,.\langle x_n,0\rangle .v\\
&=&\langle f(f(x_1^2,\overline{a},\stackrel{(n-3)}{a}),a,x_3,\overline{a},\stackrel{(n-4)}{a}),2\rangle .\,\ldots\,
.\langle x_n,0\rangle .v\\
&=&\langle f(x_1^2,f(\overline{a},\stackrel{(n-2)}{a},x_3),\overline{a},\stackrel{(n-4)}{a}),2\rangle .\,\ldots\,
.\langle x_n,0\rangle .v\\
&=&\langle f(x_1^3,\overline{a},\stackrel{(n-4)}{a}),2\rangle .\,\ldots\,.\langle x_n,0\rangle .v\\
&\vdots&\ \\
&=&\langle f(x_1^n),0\rangle .v\\
&=&f(x_1^n).v
\end{eqnarray*}
So, we obtain

\begin{proposition}
Let $(G^{\ast}_a,\cdot)$ be the covering group for an $n$-ary group
$(G,f)$. Then for a $G^{\ast}_a$-module $V$ to be a $G$-module it is
necessary and sufficient that
$$
\exists p\in G\ \forall v\in V:\ p.v=v.
$$
\end{proposition}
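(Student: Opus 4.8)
The plan is to observe that the displayed chain of equalities immediately preceding the statement already does almost all the work, so that only the two conditions in the definition of a $G$-module remain to be matched up with the hypothesis. For the necessity direction there is nothing to prove: if the restricted structure on $V$ is a $G$-module, then condition $(2)$ of the definition of a $G$-module is literally the assertion that there is a $p\in G$ with $p.v=v$ for all $v\in V$.

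For sufficiency, suppose such a $p$ exists and check that $V$, equipped with the action $x.v:=\langle x,0\rangle.v$ obtained by restricting the $G_a^{\ast}$-action along the inclusion $G\hookrightarrow G_a^{\ast}$, $x\mapsto\langle x,0\rangle$, satisfies all the requirements. First, the linearity condition $x.(\lambda v+u)=\lambda x.v+x.u$ is inherited at once, since each $\langle x,0\rangle$ acts $\mathbb{C}$-linearly on $V$ as an element of the ordinary group $G_a^{\ast}$. Next, the map $v\mapsto x.v$ is a bijection of $V$ for every $x\in G$, because $\langle x,0\rangle$ is a unit of the group $G_a^{\ast}$, its inverse acting as the inverse map; this is condition $(iii)$ of an action. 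The compatibility $f(x_1^n).v=x_1.(x_2.(\,\ldots\,.(x_n.v)\ldots))$, which is condition $(i)$ of an action, is exactly the computation carried out above: using the multiplication rule of $G_a^{\ast}$, Post's identification of $G$ with $\{\langle x,0\rangle:x\in G\}$, and the D\"ornte identity $f(\overline{a},\stackrel{(n-2)}{a},x)=x$ from Theorem \ref{dor-th} to collapse the strings of $a$'s and $\overline{a}$'s at each stage, the product $\langle x_1,0\rangle\langle x_2,0\rangle\cdots\langle x_n,0\rangle$ telescopes to $\langle f(x_1^n),0\rangle$ (the second coordinate running $0,1,2,\ldots,n-1\equiv 0$). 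Finally, condition $(ii)$ of an action and condition $(2)$ of a $G$-module are both supplied by the given element $p$. Hence $(V,p)$ is a $G$-module.

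There is essentially no obstacle here: the only genuinely computational ingredient, the telescoping identity $\langle x_1,0\rangle\cdots\langle x_n,0\rangle=\langle f(x_1^n),0\rangle$, has already been established before the statement, and everything else is a matter of reading off the definitions. The single point that deserves a word of care is that restricting a binary group action to a subset need not in general yield an action of an $n$-ary group — it is precisely the possible failure of condition $(ii)$ (equivalently, of condition $(2)$) that can obstruct this — which is why the stated criterion is both necessary and sufficient rather than automatic.
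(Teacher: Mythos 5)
Your proposal is correct and follows the paper's own route: the paper establishes the telescoping identity $\langle x_1,0\rangle\cdots\langle x_n,0\rangle .v=f(x_1^n).v$ in the displayed computation immediately before the statement and then simply remarks "So, we obtain" the proposition, leaving implicit exactly the bookkeeping you spell out (linearity and bijectivity inherited from the binary action, condition $(ii)$ and condition $(2)$ both supplied by $p$, necessity being definitional). Your write-up just makes the paper's implicit argument explicit.
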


Hence, we proved

\begin{proposition}
Let $(G^{\ast}_a,\cdot)$ be the covering group for an $n$-ary
group $(G,f)$. A representation $\Gamma$ of $G^{\ast}_a$ is a
representation of $G$, iff  $\,\ker \Gamma \cap G\neq
\varnothing$. If $\Gamma$ is irreducible
$G^{\ast}$-representation, then it is also irreducible as a
representation of $G$.
\end{proposition}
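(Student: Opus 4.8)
The plan is to deduce both assertions from the previous proposition together with the single fact that $f(x_1^n)=x_1\cdot x_2\cdots x_n$ holds in $(G^{\ast}_a,\cdot)$ for all $x_1,\ldots,x_n\in G$. First I would observe that if $\Gamma:G^{\ast}_a\to GL(V)$ is an ordinary representation, then its restriction to $G\subseteq G^{\ast}_a$ automatically satisfies $\Gamma(f(x_1^n))=\Gamma(x_1\cdots x_n)=\Gamma(x_1)\cdots\Gamma(x_n)$, simply because $\Gamma$ is a group homomorphism. Hence $\Gamma|_{G}$ is a representation of the $n$-ary group $(G,f)$ exactly when its kernel $\{x\in G:\Gamma(x)=\mathrm{id}\}=\ker\Gamma\cap G$ is non-empty, which is the stated condition. (Equivalently one may argue through modules: restriction turns the $G^{\ast}_a$-module $V$ into an action of $G$ on $V$ — condition $(i)$ of an action being exactly the displayed computation preceding the previous proposition, and $(iii)$ being immediate since each $\Gamma(x)$ is invertible — and by that proposition this action is a $G$-module precisely when some $p\in G$ acts as the identity on $V$, i.e.\ $p\in\ker\Gamma\cap G$; one then invokes the Remark identifying $G$-modules with representations of $G$.)

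For the second assertion I would first make explicit the implicit hypothesis: an ``irreducible $G^{\ast}$-representation that is also a representation of $G$'' is one with $\ker\Gamma\cap G\neq\varnothing$, as just characterised. The crux is that $G$ generates $(G^{\ast}_a,\cdot)$ as a group. This can be quoted from \cite{Post} or \cite{cover}, or verified directly: identifying $G$ with $\{\langle x,0\rangle:x\in G\}$, one checks that $G=\langle a,0\rangle H$ is a single coset of $H$, and, since $f$ is a quasigroup operation, the $k$-fold product $\underbrace{G\cdot G\cdots G}_{k}$ equals exactly $\{\langle y,r\rangle:r\equiv k-1\ (\mathrm{mod}\ n-1)\}$ — at each stage, fixing all factors but one yields a bijection of $G$ by unique solvability of $f$. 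Taking the union over $k\geq 1$ gives all of $G^{\ast}_a$.

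Granting this, $\Gamma(G)$ generates the subgroup $\Gamma(G^{\ast}_a)\leq GL(V)$, so a subspace $W\leq V$ is invariant under every $\Gamma(g)$, $g\in G^{\ast}_a$, if and only if it is invariant under every $\Gamma(x)$, $x\in G$. Thus the $G^{\ast}_a$-submodules of $V$ and the $G$-submodules of $V$ for the restricted action coincide, and in particular $V$ has a proper non-zero $G$-submodule iff it has a proper non-zero $G^{\ast}_a$-submodule. Hence irreducibility of $\Gamma$ over $G^{\ast}_a$ forces irreducibility over $G$ (and the converse implication holds by the same argument).

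I expect the only real obstacle to be the generation claim — pinning down that $G$ generates its covering group — since everything else is bookkeeping from the definitions and from the previous proposition; once generation is in hand, the equality of the two submodule lattices, and therefore the transfer of irreducibility, is immediate.
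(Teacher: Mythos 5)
Your proof is correct. For the ``iff'' part you follow essentially the same route as the paper: the restriction of $\Gamma$ to $G$ automatically satisfies $\Gamma(f(x_1^n))=\Gamma(x_1)\cdots\Gamma(x_n)$ because $f$ is the restriction of the group product, so the only issue is the non-emptiness of $\ker\Gamma\cap G$, which is exactly the condition $\exists p\in G\ \forall v:\ p.v=v$ from the preceding proposition. The real difference is in the irreducibility claim: the paper simply asserts it with no argument, whereas you supply one, namely that $G$ (identified with the slice $\{\langle x,0\rangle\}$) generates $G^{\ast}_a$, which you verify correctly from the coset construction --- $G^k$ is the full slice at level $k-1 \pmod{n-1}$ by unique solvability --- so that the $G$-invariant and $G^{\ast}_a$-invariant subspaces of $V$ coincide (using finite-dimensionality to pass from $\Gamma(g)W\subseteq W$ to $\Gamma(g^{-1})W\subseteq W$). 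This actually buys more than the paper states, since the lattice identification gives irreducibility transfer in both directions. The only cosmetic caveat is that the second sentence of the proposition should be read, as you note, with the implicit hypothesis $\ker\Gamma\cap G\neq\varnothing$ (otherwise ``representation of $G$'' is not defined), but that does not affect your argument.
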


Now, suppose $(V,p)$ is a $G$-module. For the covering group
$(G_p^{\ast},\cdot)$ of $(G,f)$ we can define an action of $G_p^{\ast}$ on
$V$ as
$$
\langle x,k\rangle .v=x.v.
$$
Then, it can be easily verified that $V$ is a $G_p^{\ast}$-module.
But, we know that $G_a^{\ast}\cong G_p^{\ast}$, so let $h:
G_a^{\ast}\rightarrow G_p^{\ast}$ be any isomorphism. For any
$x\in G_a^{\ast}$, define $x.v=h(x).v$. Hence $V$ becomes a
$G_a^{\ast}$-module. Further, if $W$ is a $G$-submodule of $G$,
then it is also a $G_p^{\ast}$-submodule and so a
$G_a^{\ast}$-submodule. Hence, we proved

\begin{theorem}
There is a bijection between the set of all irreducible
representations of $(G,f)$ and the set of all irreducible
representations of $G^{\ast}_a$ with kernels not disjoint from $G$.
\end{theorem}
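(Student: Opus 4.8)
The plan is to realise the bijection as the restriction map, with inverse given by the extension procedure described just above, and then to verify that the two operations are mutually inverse; the substantial content --- preservation of irreducibility in each direction --- is already contained in the two preceding propositions and in the discussion immediately before the statement. Define $\Phi$ on an irreducible $\Gamma\colon G^{\ast}_a\to GL(V)$ with $\ker\Gamma\cap G\neq\varnothing$ by $\Phi(\Gamma)(x)=\Gamma(\langle x,0\rangle)$ for $x\in G$; by the propositions above this is an irreducible representation of $(G,f)$. Define $\Psi$ on an irreducible $G$-module $(V,p)$ with representation $\Lambda$ by first making $V$ a $G^{\ast}_p$-module via $\langle x,k\rangle.v=\Lambda(x)(v)$ (verified above), then transporting this structure along an isomorphism $h\colon G^{\ast}_a\to G^{\ast}_p$ to obtain a $G^{\ast}_a$-module, whose representation is $\Gamma_\Lambda=:\Psi(\Lambda)$. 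It is essential here to take for $h$ the isomorphism supplied by the structure theory of covering groups (see \cite{cover}, \cite{Post}), which restricts to the identity on the common generating coset $G$; such an $h$ exists because $G$ generates every covering group of $(G,f)$ and the relations among its label-$0$ elements are exactly the consequences of $f(x_1^n)=x_1\cdots x_n$, which do not involve the parameter $a$ or $p$.

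Next I would check that $\Psi$ lands in the right set. A subspace of $V$ is $G^{\ast}_a$-invariant if and only if it is $G$-invariant: transported along $h$, the action of $\langle x,k\rangle$ is the $G^{\ast}_p$-action of its image and hence depends only on the first coordinate, so invariance under all of $G^{\ast}_a$ amounts to invariance under every $\Lambda(x)$. Since $(V,p)$ is $G$-irreducible, $\Gamma_\Lambda$ is therefore $G^{\ast}_a$-irreducible. (Note that the discussion before the statement records only one of these two inclusions, so this equivalence of invariant-subspace lattices is the real point of the argument here.) Moreover $\langle p,0\rangle\in G$ acts trivially, because its $h$-image is again $\langle p,0\rangle$ and $\Lambda(p)=\mathrm{id}$; hence $\ker\Gamma_\Lambda\cap G\neq\varnothing$, so $\Psi(\Lambda)$ indeed belongs to the target set.

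It then remains to see that $\Phi$ and $\Psi$ are mutually inverse. For $\Phi\circ\Psi$: $\Phi(\Psi(\Lambda))(x)=\Gamma_\Lambda(\langle x,0\rangle)$ equals the $G^{\ast}_p$-action of $h(\langle x,0\rangle)=\langle x,0\rangle$, which is $\Lambda(x)$; thus $\Phi\circ\Psi=\mathrm{id}$. For $\Psi\circ\Phi$: starting from $\Gamma$ with some $p\in\ker\Gamma\cap G$ and putting $\Lambda=\Phi(\Gamma)$, one has $\Gamma_\Lambda(\langle x,0\rangle)=\Lambda(x)=\Gamma(\langle x,0\rangle)$ for all $x\in G$; since the coset $G=\{\langle x,0\rangle:x\in G\}$ generates $G^{\ast}_a$ (part of Post's coset theorem, and made explicit by the computation displayed above, where each $\langle y,k\rangle$ appears as a product of $\langle x,0\rangle$'s) and $\Gamma_\Lambda,\Gamma$ are both homomorphisms, we get $\Gamma_\Lambda=\Gamma$. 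Hence $\Phi$ is a bijection with inverse $\Psi$, carrying irreducible representations to irreducible representations in both directions.

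The one delicate point --- and thus the main obstacle --- is the choice of $h$: with an arbitrary isomorphism $G^{\ast}_a\cong G^{\ast}_p$ the composite $\Psi\circ\Phi$ would only be guaranteed to return a representation equivalent, not equal, to $\Gamma$, so one must invoke the fact that the inclusion $G\hookrightarrow G^{\ast}$ is canonical up to an isomorphism fixing $G$ pointwise. Once this is granted, everything else is routine bookkeeping with facts already established in the paper.
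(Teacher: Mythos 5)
Your proof is correct and follows essentially the same route as the paper: restriction gives one direction, and the extension of a $G$-module $(V,p)$ to a $G^{\ast}_p$-module transported along an isomorphism $G^{\ast}_a\cong G^{\ast}_p$ gives the other, with irreducibility preserved because $G$ generates $G^{\ast}_a$. You are in fact more careful than the paper, which takes ``any isomorphism'' $h$ and never checks that the two constructions are mutually inverse; your observation that $h$ must be chosen to fix the coset $G$ pointwise (or, equivalently, that a representation of $G^{\ast}_a$ is determined by its restriction to the generating set $G$) is exactly the point the paper glosses over.
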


\section{Normal subgroups in polyadic groups}
In this section, we show that  the representation
theory of  $n$-ary groups reduces to the representation theory of binary groups. For this we
introduce the concept of normal $n$-ary subgroup.

\begin{definition}
An $n$-ary subgroup $H$ of an $n$-ary group $(G,f)$ is called {\it
normal} if
$$
f(\stackrel{(n-3)}{a},\overline{a},h,a)\in H
$$
for all $h\in H$ and $a\in G$. A normal subgroup $H\ne G$
containing at least two elements is called {\it proper}. If $G$ has no any proper normal subgroup, then we say that it is {\em simple}. If $H=G$ is the only simple subgroup of $G$, then we say it is {\em strongly simple}.
\end{definition}

\begin{definition}
For any $n$-ary subgroup $H$ of an $n$-ary group $(G,f)$ we define
the relation $\sim_H$ on $G$, by
$$
a\sim_H b\ \Longleftrightarrow\ \exists x, y\in H:\
b=f(a,\stackrel{(n-2)}{x},y).
$$
\end{definition}

\begin{lemma}\label{L-equiv}
$ a\sim_H b\ \Longleftrightarrow\ \exists x_2,\ldots,x_n\in H:\
b=f(a,x_2^n)$.
\end{lemma}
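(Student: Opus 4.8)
The plan is to prove the two implications separately; the forward one is immediate and the reverse one carries all the content.

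For $a\sim_H b\Rightarrow \exists x_2,\ldots,x_n\in H:\ b=f(a,x_2^n)$: by the definition of $\sim_H$ there are $x,y\in H$ with $b=f(a,\stackrel{(n-2)}{x},y)$, and putting $x_2=\cdots=x_{n-1}=x$, $x_n=y$ (all of which lie in $H$) gives $b=f(a,x_2^n)$.

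For the converse, suppose $b=f(a,x_2^n)$ with $x_2,\ldots,x_n\in H$. I would fix an arbitrary $x\in H$ (any $n$-ary subgroup is non-empty) and set
\[
y:=f(\overline{x},x_2,x_3,\ldots,x_n).
\]
Since $H$ is an $n$-ary subgroup it is closed under the skew map and under $f$, so $\overline{x}\in H$ and hence $y\in H$. It then suffices to check that $f(a,\stackrel{(n-2)}{x},y)=b$, which yields $a\sim_H b$. Regrouping the word $a,\stackrel{(n-2)}{x},\overline{x},x_2,\ldots,x_n$ (of length $2n-1$) by the associativity of $f$ gives
\[
f(a,\stackrel{(n-2)}{x},y)=f\big(f(a,\stackrel{(n-2)}{x},\overline{x}),x_2,x_3,\ldots,x_n\big),
\]
and by D\"ornte's identity \eqref{e-dor} (the second equality there, taken with $j=2$) one has $f(a,\stackrel{(n-2)}{x},\overline{x})=a$, so the right-hand side equals $f(a,x_2^n)=b$.

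The only point requiring a little care is that the regrouping above is a legitimate instance of the total associativity available in $n$-ary groups, and that the indices in \eqref{e-dor} are matched correctly; I do not expect any genuine obstacle. If one prefers, the same computation can be carried out inside a covering group $(G^{\ast},\cdot)$ of $(G,f)$, where $\overline{x}=x^{2-n}$ and it reduces to $a\,x^{n-2}\,(x^{2-n}x_2\cdots x_n)=a\,x_2\cdots x_n$; but the direct $n$-ary verification is shorter and self-contained.
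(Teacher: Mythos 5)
Your proof is correct and follows essentially the same route as the paper: both arguments take an arbitrary $x\in H$, set $y=f(\overline{x},x_2^n)\in H$, and reduce $f(a,\stackrel{(n-2)}{x},y)$ to $f(a,x_2^n)$ via associativity and the D\"ornte identity \eqref{e-dor}. The only difference is cosmetic (the paper absorbs $\stackrel{(n-2)}{x},\overline{x}$ into $x_2$ via $f(\stackrel{(n-2)}{x},\overline{x},x_2)=x_2$, while you absorb it into $a$ via $f(a,\stackrel{(n-2)}{x},\overline{x})=a$), and your index check for \eqref{e-dor} with $j=2$ is accurate.
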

\begin{proof}
Indeed, if $b=f(a,x_2^n)$ for some $x_2,\ldots,x_n\in H$, then, in
view of Theorem \ref{dor-th}, for every $x\in H$ we have
$$
b=f(a,x_2^n)=f(a,f(\stackrel{(n-2)}{x},\overline{x},x_2),x_3^n)=
f(a,\stackrel{(n-2)}{x},y),
$$
where $y=f(\overline{x},x_2^n)\in H$, so $a\sim_H b$. The converse
is obvious.
\end{proof}
Now it is easy to see that such defined relation is an equivalence on $G$.
The equivalence class of $G$, containing $a$ is denoted by $aH$
and is called the {\it left coset} of $H$ with the representative
$a$. By Lemma \ref{L-equiv} it has the form
$$
aH=\{f(a,\stackrel{(n-2)}{x},y): x, y\in
H\}=\{f(a,h_2^n):h_2,\ldots,h_n\in H\}.
$$

The $n$-ary group $(G,f)$ is partitioned by cosets of $H$.

\begin{proposition}
If $H$ is a finite $n$-ary subgroup of $(G,f)$, then for all $a\in
G$, we have $|aH|=|H|$.
\end{proposition}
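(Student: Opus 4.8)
The plan is to produce an explicit bijection between $H$ and the coset $aH$. Since $H$ is an $n$-ary subgroup it is non-empty, so I would fix once and for all an element $c\in H$ and define
\[
\phi\colon H\longrightarrow aH,\qquad \phi(y)=f(a,\stackrel{(n-2)}{c},y).
\]
This map is well defined: by the description of cosets recorded just before the proposition, every element of the form $f(a,\stackrel{(n-2)}{c},y)$ with $y\in H$ lies in $aH$.

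Next I would verify that $\phi$ is onto. Let $b\in aH$, so $b=f(a,h_2^n)$ for some $h_2,\dots,h_n\in H$. The computation carried out in the proof of Lemma~\ref{L-equiv} shows precisely that, for our fixed $c\in H$, one has $b=f(a,\stackrel{(n-2)}{c},y)$ with $y=f(\overline{c},h_2^n)\in H$; here one uses that $H$ is closed under $f$ and under the skew operation, together with D\"ornte's identity \eqref{e-dor} to collapse $f(\stackrel{(n-2)}{c},\overline{c},h_2)=h_2$. Thus $b=\phi(y)$, and $\phi$ is surjective.

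Injectivity is then immediate from the $n$-ary quasigroup axioms: the unary map $z\mapsto f(a,\stackrel{(n-2)}{c},z)$ is a bijection of $G$, since \eqref{solv} with $i=n$ is uniquely solvable, so in particular its restriction to $H$, which is exactly $\phi$, is injective. Combining the three steps, $\phi$ is a bijection and hence $|aH|=|H|$. (In fact finiteness of $H$ plays no role in this argument; the equality holds for the cardinalities of an arbitrary $n$-ary subgroup.)

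The proof is short, and the only point needing care is surjectivity: a priori the coset $aH$ is defined by letting all of $h_2,\dots,h_n$ range independently over $H$, whereas $\phi$ varies only the last slot. The content of Lemma~\ref{L-equiv}, or more precisely of its proof, is exactly what permits one to freeze the $n-2$ inner entries at the single chosen value $c$, so that is the step I would present most explicitly.
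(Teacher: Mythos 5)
Your proof is correct, but it follows a different route from the paper's. The paper invokes the Hossz\'u--Gluskin Theorem to write $f(x_1^n)=x_1\cdot\varphi(x_2)\cdots\varphi^{n-1}(x_n)\cdot b$ in a binary group $(G,\cdot)$, so that $aH$ becomes the left translate by $a$ of the fixed set $\{\varphi(x_2)\cdots\varphi^{n-1}(x_n)\cdot b: x_2,\ldots,x_n\in H\}$; left translation in a group is a bijection, so $|aH|$ is independent of $a$, and taking $a\in H$ (whose coset is $H$ itself) gives $|aH|=|H|$. You instead build an explicit bijection $\phi\colon H\to aH$, $\phi(y)=f(a,\stackrel{(n-2)}{c},y)$ for a fixed $c\in H$, getting surjectivity from the computation in Lemma~\ref{L-equiv} (which lets you freeze the inner slots at $c$) and injectivity from unique solvability of \eqref{solv} at the place $i=n$. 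Your argument is more elementary and self-contained --- it uses only the quasigroup axioms, D\"ornte's identities, and closure of $H$ under $f$ and the skew operation, with no appeal to Theorem~\ref{T-HG} --- and, as you note, it makes transparent that finiteness of $H$ is irrelevant. The paper's version is shorter once Hossz\'u--Gluskin is available and fits the paper's general strategy of reducing $n$-ary statements to binary ones, but it leaves implicit the final identification of the $a$-independent set's cardinality with $|H|$; your direct bijection avoids that gap. Both proofs are valid.
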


\begin{proof} By Theorem \ref{T-HG}, for an $n$-ary group $(G,f)$ there is a binary group $(G,\cdot)$,
$\varphi\in Aut(G,\cdot)$ and an element $b\in G$ such that
$$
f(x_1^n)=x_1\cdot\varphi(x_2)\cdot\varphi^2(x_3)\,\ldots\cdot\varphi^{n-1}(x_n)\cdot
b,
$$
for all $x_1,\ldots,x_n\in G$. So, we have
$$
aH=\{
a\cdot\varphi(x_2)\cdot\varphi^2(x_3)\,\ldots\cdot\varphi^{n-1}(x_n)\cdot
b: x_2,\ldots,x_n\in H\}.
$$
But, clearly this set is in one-one correspondence with the set
$$
\{
\varphi(x_2)\cdot\varphi^2(x_3)\,\ldots\cdot\varphi^{n-1}(x_n)\cdot
b: x_2,\ldots,x_n\in H\},
$$
which does not depend on $a$. So, we have $|aH|=|H|$.
\end{proof}

On the set $G/H=\{aH:a\in G\}$ we introduce the operation
$$
f_H(a_1H,a_2H,\ldots,a_nH)=f(a_1^n)H.
$$

\begin{proposition}
If $H$ is a normal $n$-ary subgroup of $(G,f)$, then $(G/H,f_H)$
is an $n$-ary group derived from the group $Ret_H(G/H,f)$.
\end{proposition}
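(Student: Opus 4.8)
The plan is to establish, in order, three things: that the operation $f_H$ is well defined (this is where normality is used and is the heart of the matter), that $(G/H,f_H)$ is then an $n$-ary group, and that the coset $H$ is an $n$-ary identity of $(G/H,f_H)$; the characterisation of reducible $n$-ary groups via \eqref{n-id} then finishes the proof, since a reducible $n$-ary group with $n$-ary identity $e$ is derived precisely from its retract $Ret_e$.

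For well-definedness I would prove that $\sim_H$ is a congruence of $(G,f)$. Suppose $a_i\sim_H b_i$ for $i=1,\ldots,n$. By Lemma \ref{L-equiv} write $b_i=f(a_i,x_{i2}^{in})$ with all $x_{ij}\in H$. Substituting these into $f(b_1^n)$ and re-associating, the problem reduces to transporting each block of $H$-elements rightward past the remaining $a_j$'s. Here the normality hypothesis is decisive: it says exactly that $H$ is stable under each of the maps $h\mapsto f(\stackrel{(n-3)}{a},\overline{a},h,a)$, and together with Theorem \ref{dor-th} this is precisely what allows one to move an element of $H$ past an arbitrary element of $G$ while staying inside $H$. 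Performing these moves, $f(b_1^n)$ is brought to the form $f(f(a_1^n),h_2^n)$ with $h_j\in H$, so $f(a_1^n)\sim_H f(b_1^n)$, as required.

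Granted that $f_H$ is well defined, $\pi\colon G\to G/H$, $a\mapsto aH$, is a surjective $n$-ary homomorphism; associativity of $f_H$ is inherited from $f$, and solvability of \eqref{solv} at the places $1$ and $n$ in $(G/H,f_H)$ follows from solvability in $(G,f)$ and surjectivity of $\pi$, so $(G/H,f_H)$ is an $n$-ary group. Moreover $H$ is a single $\sim_H$-class: given $h,h'\in H$, pick any $x\in H$ and solve $f(h,\stackrel{(n-2)}{x},y)=h'$ for $y$ inside the $n$-ary group $(H,f)$; hence $H\in G/H$. To see that $H$ is an $n$-ary identity one must verify $f_H(\stackrel{(k-1)}{H},aH,\stackrel{(n-k)}{H})=aH$ for all $a\in G$ and $1\le k\le n$, and since $f_H$ is well defined it suffices to produce representatives $c_1,\ldots,c_{k-1},c_{k+1},\ldots,c_n\in H$ with $f(c_1,\ldots,c_{k-1},a,c_{k+1},\ldots,c_n)\sim_H a$. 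For $k=1$ this is immediate, since $f(a,\stackrel{(n-1)}{h})\in aH$ for any $h\in H$; for $2\le k\le n$ one again uses normality to transport the $c_i$ lying to the left of $a$ across it, replacing them by other elements of $H$, which reduces matters to the case $k=1$.

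With $H$ an $n$-ary identity, \eqref{n-id} holds, so $(G/H,f_H)$ is derived from a binary group; and since the identity of $Ret_H(G/H,f_H)$ is the skew of $H$, which is $H$ itself, that binary group is $Ret_H(G/H,f_H)$, which is the assertion. The step I expect to cost the most work is the well-definedness argument --- making precise the way $f(\stackrel{(n-3)}{a},\overline{a},h,a)\in H$ is used to transport $H$-elements through an $f$-product --- and the same mechanism resurfaces, in a lighter form, in the $2\le k\le n-1$ part of the identity step. An alternative is to run everything through the Hossz\'u--Gluskin presentation of $f$ (Theorem \ref{T-HG}): describe $H$ as a coset of a $\varphi$-invariant normal subgroup of a retract of $(G,f)$ and check that the automorphism induced on the quotient is trivial; but the direct route above seems more transparent.
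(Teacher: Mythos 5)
Your proposal is correct and follows essentially the same route as the paper: the heart of the argument is the well-definedness of $f_H$, obtained by using normality together with Theorem \ref{dor-th} to transport the $H$-elements coming from $b_i=f(a_i,x_{i2}^{in})$ rightward past the $a_j$'s until $f(b_1^n)=f(f(a_1^n),h_2^n)$, after which one checks that $H$ is an $n$-ary identity of $(G/H,f_H)$ and invokes the characterisation \eqref{n-id}. The only difference is one of emphasis: the paper writes out the transport computation explicitly and leaves the remaining verifications (that $G/H$ is an $n$-ary group and that $H$ is its identity) to the reader, whereas you sketch the computation but supply those omitted verifications in more detail.
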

\begin{proof}
First we show that the operation $f_H$ is well-defined. For this
let $a_iH=b_iH$ for some $a_i,b_i\in G$, $i=1,2,\ldots,n$. Then
\[
b_1=f(a_1,x_2^n), \ \ \ b_2=f(a_2,y_2^n), \ \ \ldots , \ \
b_n=f(a_n,z_2^n)
\]
for some $x_i, y_i,\ldots,z_i\in H$

Now, using Theorem \ref{T-HG} we obtain
\begin{eqnarray*}
f(b_1^n)&=&f(f(a_1,x_2^n),f(a_2,y_2^n),\ldots,f(a_n,z_2^n))\\
        &=&f(f(a_1,x_2^{n-1},f(\stackrel{(n-2)}{a_2},\overline{a}_2,x_n)),f(a_2,y_2^n),\ldots,f(a_n,z_1^n))\\
        &=&f(f(a_1,x_2^{n-1},a_2),f(f(\stackrel{(n-3)}{a_2},\overline{a}_2,x_n,a_2),y_2^n),\ldots,f(a_n,z_n^n))\\
        &=&f(f(a_1,x_2^{n-1},a_2),f(w_n,y_2^n),\ldots,f(a_n,z_1^n))\\
        &=&f(f(a_1,x_2^{n-2},f(\stackrel{(n-2)}{a_2},\overline{a}_2,x_{n-1}),a_2),f(w_n,y_2^n),\ldots,f(a_n,z_2^n))\\
        &=&f(f(a_1,x_2^{n-2},a_2,f(\stackrel{(n-3)}{a_2},\overline{a}_2,x_{n-1},a_2)),f(w_n,y_2^n),\ldots,f(a_n,z_2^n))\\
        &=&f(f(a_1,x_2^{n-2},a_2,w_{n-1}),f(w_n,y_2^n),\ldots,f(a_n,z_2^n))\\
        &\vdots&\ \\
        &=&f(f(a_1,a_2,w_3^{n-1}),f(w_n,y_2^n),\ldots,f(a_n,z_2^n)),
\end{eqnarray*}
where $w_i=f(\stackrel{(n-3)}{a_2},\overline{a}_2,x_i,a_2)\in H$.

Repeating this procedure for $a_3,a_4$ and so on, we obtain
$$
f(b_1^n)=f(f(a_1^n),h_2^n).
$$
This means that the operation $f_H$ is well-defined.

It is easy to verify that $(G/H,f_H)$ is an $n$-ary group. Using
the above procedure it is not difficult to see that $H$ is the
identity of $G/H$. Hence an $n$-ary group $G/H$ is derived from
the group $Ret_H(G/H)$.
\end{proof}

Now, we return to the representations, again. Consider a
representation $\Lambda:(G,f)\rightarrow GL(V)$. It is easy to see that $\ker \Lambda$ is a normal subgroup of $G$. Let $H$ be a
normal $n$-ary subgroup of $(G,f)$ such that $H\subseteq \ker
\Lambda$. Then, there is a representation $\bar{\Lambda}:G/H
\rightarrow GL(V)$ such that
$$
\bar{\Lambda}(aH)=\Lambda(a).
$$
Conversely, from every representation of $G/H$, we obtain a representation of $G$. On the other hand, $G/H$ is of reduced type, and hence
its representations are the same as the ordinary representations of $Ret_H(G/H)$. So, we proved,

\begin{proposition}
There is a bijection between ordinary representations of $Ret_H(G/H)$ and the set of representations of $G$ with the property $H\subseteq \ker \Lambda$.
\end{proposition}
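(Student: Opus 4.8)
The plan is to obtain the bijection through the ``factor through the quotient'' mechanism, using that $H$ is normal. By the previous proposition the quotient $n$-ary group $(G/H,f_H)$ is defined and is derived from the binary group $Ret_H(G/H)$; in particular $H$ itself is the identity element of $G/H$. Moreover the canonical surjection $\pi\colon G\to G/H$, $a\mapsto aH$, is a homomorphism of $n$-ary groups, since $\pi(f(x_1^n))=f(x_1^n)H=f_H(x_1H,\ldots,x_nH)=f_H(\pi(x_1),\ldots,\pi(x_n))$ by the very definition of $f_H$.

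For the forward direction, let $\Lambda\colon(G,f)\to GL(V)$ be a representation with $H\subseteq\ker\Lambda$, and set $\bar\Lambda(aH)=\Lambda(a)$. The key point to check is well-definedness: if $aH=bH$, then by Lemma~\ref{L-equiv} we may write $b=f(a,x_2^n)$ with $x_2,\ldots,x_n\in H$, hence $\Lambda(b)=\Lambda(a)\Lambda(x_2)\cdots\Lambda(x_n)=\Lambda(a)$ because each $\Lambda(x_i)=I$. Then $\bar\Lambda(f_H(a_1H,\ldots,a_nH))=\Lambda(f(a_1^n))=\Lambda(a_1)\cdots\Lambda(a_n)=\bar\Lambda(a_1H)\cdots\bar\Lambda(a_nH)$, so $\bar\Lambda$ is an $n$-ary representation of $G/H$, and $\bar\Lambda(H)=\Lambda(h)=I$ for $h\in H$. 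Since $G/H$ is derived from $Ret_H(G/H)$ with identity $H$, applying $\bar\Lambda$ to $x\ast y=f_H(x,\stackrel{(n-2)}{H},y)$ gives $\bar\Lambda(x\ast y)=\bar\Lambda(x)\bar\Lambda(H)^{n-2}\bar\Lambda(y)=\bar\Lambda(x)\bar\Lambda(y)$, so $\bar\Lambda$ is an ordinary representation of $Ret_H(G/H)$. For the backward direction, given an ordinary representation $\Delta$ of $Ret_H(G/H)$, view it as an $n$-ary representation of the derived group $(G/H,f_H)$ --- this is legitimate because $\Delta(H)=I$ --- and put $\Lambda=\Delta\circ\pi$. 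Then $\Lambda(f(x_1^n))=\Delta(f_H(\pi(x_1),\ldots,\pi(x_n)))=\Lambda(x_1)\cdots\Lambda(x_n)$, and $\Lambda(h)=\Delta(H)=I$ for all $h\in H$, so $\Lambda$ is a representation of $G$ with $H\subseteq\ker\Lambda$.

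It remains to note that the two assignments are mutually inverse: $\overline{\Delta\circ\pi}(aH)=(\Delta\circ\pi)(a)=\Delta(aH)$, and $(\bar\Lambda\circ\pi)(a)=\bar\Lambda(aH)=\Lambda(a)$. I expect the only genuinely delicate step to be the first one --- the well-definedness of $\bar\Lambda$ together with the accompanying bookkeeping that, once $H\subseteq\ker\Lambda$ forces $\bar\Lambda$ to send the identity $H$ of $G/H$ to the identity operator, upgrades an $n$-ary representation of the derived group $G/H$ to an honest ordinary representation of the binary retract $Ret_H(G/H)$. Everything else is a routine transcription of the factorization $H\to G\to G/H$ into the language of representations, the normality of $\ker\Lambda$ guaranteeing that the hypothesis $H\subseteq\ker\Lambda$ fits naturally into the picture.
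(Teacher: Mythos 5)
Your proposal is correct and follows essentially the same route as the paper, which only sketches the argument: factor $\Lambda$ through the canonical surjection $\pi\colon G\to G/H$ (well-definedness via Lemma~\ref{L-equiv} and $H\subseteq\ker\Lambda$), and then use that $(G/H,f_H)$ is derived from $Ret_H(G/H)$ with identity $H$ to identify its representations with ordinary representations of the retract. Your write-up merely supplies the details the paper leaves implicit, so no further comment is needed.
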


\begin{proposition}
A simple $n$-ary group which is not strongly simple is $b$-derived
from an abelian group or it is reducible to a non-abelian group.
\end{proposition}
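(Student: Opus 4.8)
The plan is to use the Hossz\'u--Gluskin Theorem to write $(G,f)$ as a $(\varphi,b)$-derived $n$-ary group over a binary group $(G,\cdot)=Ret_a(G,f)$, and then translate the hypotheses ``simple'' and ``not strongly simple'' into statements about normal subgroups of $(G,\cdot)$ and about the automorphism $\varphi$. First I would observe that by the definitions in this section, ``not strongly simple'' means $G$ has a simple $n$-ary subgroup $H\neq G$; the smallest nontrivial case to exploit is that $H$ can be taken minimal, hence it contains at least two elements and, being simple, is itself without proper normal subgroups. The key is that a normal $n$-ary subgroup of $(G,f)$, when one passes to the binary retract via Theorem~\ref{T-HG}, corresponds to an ordinary subgroup of $(G,\cdot)$ that is both normal in $(G,\cdot)$ and $\varphi$-invariant; conversely any such subgroup gives a normal $n$-ary subgroup. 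So simplicity of $(G,f)$ says: $(G,\cdot)$ has no proper nontrivial subgroup that is simultaneously normal and $\varphi$-invariant.

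Next I would split into two cases according to whether $\varphi$ is inner or not — more precisely, whether $(G,f)$ is reducible (equivalently $b$-derived with trivial twist, i.e.\ $\varphi=\mathrm{id}$ up to the covering-group identification) or genuinely twisted. If $(G,f)$ is $b$-derived from $(G,\cdot)$, then $\varphi$ is conjugation by $b$ and the $\varphi$-invariance condition is automatic for normal subgroups of $(G,\cdot)$; hence simplicity of $(G,f)$ forces $(G,\cdot)$ to be a simple binary group or cyclic of prime order. If moreover $(G,\cdot)$ is non-abelian we are in the second alternative of the statement (``reducible to a non-abelian group'' — here I would need to double-check that $b$-derived with a central $b$ still counts as ``reducible'' in the paper's sense, or else argue directly that the derived structure suffices). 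If $(G,\cdot)$ is abelian, a simple abelian group is $\mathbb{Z}_p$, which is the ``$b$-derived from an abelian group'' alternative. The remaining situation — $\varphi$ not arising from conjugation — is where the hypothesis ``not strongly simple'' does the work: the existence of a proper simple $n$-ary subgroup $H$ forces a $\varphi$-invariant normal subgroup of $(G,\cdot)$ to exist inside the retract of $H$, and I would push this to contradict genuine twisting unless $(G,\cdot)$ is already abelian.

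The main obstacle I anticipate is the precise bookkeeping of what ``simple $n$-ary subgroup $H$'' gives us at the level of the binary retract: a subgroup of $(G,f)$ has its own retract, and the compatibility between $Ret_a(H,f)$ and $Ret_a(G,f)$ (for a shared inner element $a\in H$) must be handled carefully, together with the fact that $\varphi$ restricts to $H$'s retract. Concretely, the hard part will be showing that a proper simple sub-$n$-ary-group cannot coexist with a ``truly twisted'' Hossz\'u--Gluskin presentation unless the big retract is abelian: I expect to argue that $\varphi^{n-1}=\mathrm{conj}_b$ together with the absence of proper $\varphi$-invariant normal subgroups of $(G,\cdot)$ pins down $(G,\cdot)$ to be characteristically simple, and then a minimal normal subgroup of it supplies either a proper normal $n$-ary subgroup (contradicting simplicity of $(G,f)$) or collapses the twist. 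Once the twist is shown to be inessential, the classification of simple binary groups (and the observation that an abelian simple group is $\mathbb{Z}_p$) finishes the proof. I would also keep an eye on the degenerate possibility that $H$ has exactly two elements and $n$ is such that small cases behave anomalously, handling those by direct inspection.
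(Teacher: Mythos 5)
There is a genuine gap, and it starts with the reading of the hypothesis. In the paper's definitions a normal $n$-ary subgroup is \emph{proper} only if it is $\ne G$ \emph{and} contains at least two elements; hence a simple $n$-ary group that is not strongly simple possesses a normal subgroup other than $G$ which, not being proper, must be a singleton $H=\{p\}$. That singleton is the entire engine of the proof, and your reading --- that ``not strongly simple'' supplies a proper \emph{simple} sub-$n$-ary-group $H$ with at least two elements --- points the argument in the wrong direction from the first step. The paper's actual proof is short: from $f(\stackrel{(n)}{p})=p$, $\overline{p}=p$ and the normality condition $f(\stackrel{(n-3)}{x},\overline{x},p,x)=p$ one computes $f(p,x_2^n)=f(x_2,p,x_3^n)$, so $p$ is a central element; by the cited result of Dudek--Michalski an $n$-ary group with a central element is $b$-derived from a binary group $(G,\cdot)$ with $b\in Z(G,\cdot)$; then $Z(G,f)=Z(G,\cdot)$ is itself a normal $n$-ary subgroup, and simplicity forces either $Z(G,\cdot)=G$ (the abelian alternative) or $Z(G,\cdot)$ a singleton, whence $b=1$ and $(G,f)$ is reducible to a non-abelian group. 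No Hossz\'u--Gluskin bookkeeping, no $\varphi$-invariance, and no classification of simple groups is needed.

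Beyond the misread hypothesis, your plan is not actually a proof: the claimed correspondence between normal $n$-ary subgroups of $(G,f)$ and normal $\varphi$-invariant subgroups of the retract is asserted but never established, the case where $\varphi$ is ``genuinely twisted'' is left as something you ``expect to argue,'' and the final appeal to ``the classification of simple binary groups'' is both unavailable (the statement does not assume $G$ finite) and unnecessary. If you keep your framework, you would still have to locate the singleton normal subgroup and show its element is central --- at which point you would have rediscovered the paper's argument and could discard the rest.
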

\begin{proof} Suppose $H=\{ p\}$ is a normal $n$-ary subgroup of $(G,f)$. Then we have
$$
f(p,p,\ldots,p)=p,\ \ \overline{p}=p,\ \ \forall x\in G:
f(\stackrel{(n-3)}{x},\overline{x},p,x)=p.
$$
Hence
\begin{eqnarray*}
f(p,x_2^n)&=&f(f(\stackrel{(n-2)}{x_2},\overline{x}_2,p),x_2^n)\\
     &=&f(x_2,f(\stackrel{(n-3)}{x_2},\overline{x}_2,p,x_2),x_3^n)\\
     &=&f(x_2,p,x_3^n).
\end{eqnarray*}
This shows that $p$ is a central element and, according to
\cite{DM'84}, an $n$-ary group $(G,f)$ is $b$-derived from a
binary group $(G,\cdot)$. Hence, $Z(G,f)=Z(G,\cdot)$ is a normal
$n$-ary subgroup of $(G,f)$. But $G$ has no proper normal
subgroups, so there are two cases:
\begin{enumerate}
\item $Z(G,\cdot)=G$ and so $(G,f)$ is $b$-derived from an
abelian group,

\item $Z(G,\cdot)$ is singleton and hence $b=1$. In this case
$(G,f)$ is reducible to a non-abelian group $(G,\cdot)$.
\end{enumerate}
\end{proof}

\begin{remark}
To find representations of an $n$-ary group $(G,f)$, we have four
cases, as follow,
\begin{enumerate}
\item only $H=G$ is a normal subgroup of $(G,f)$, (in this
case $(G,f)$ has only trivial representation),

\item $(G,f)$ is $b$-derived from an abelian group,

\item $(G,f)=der(G,\cdot)$, (in this case representations of $(G,f)$ are the
same as the representations of $(G,\cdot)$),

\item $(G,f)$ has proper normal $n$-ary subgroups, (in this case, if we know the set of
normal $n$-ary subgroups of $(G,f)$, then we obtain all its
representations from representations of the groups $Ret_H(G/H)$).
\end{enumerate}
\end{remark}

Finally, summarizing results of this section, we have the following theorem:

\begin{theorem}
Representation theory of $n$-ary groups, reduces to the following
three problems,

$a)$ \ representations of $b$-derived ternary groups from abelian
groups,

$b)$ \ determining all normal $n$-ary subgroup,

$c)$ \ representation theory of ordinary groups.
\end{theorem}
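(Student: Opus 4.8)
The plan is to reassemble the results of this section into the stated three-part reduction, organizing the whole argument around the lattice of normal $n$-ary subgroups of $(G,f)$ and, for a given representation, around its kernel. The starting point is that for any representation $\Lambda\colon(G,f)\to GL(V)$ the set $\ker\Lambda$ is a normal $n$-ary subgroup, so the four mutually exclusive and exhaustive situations of the Remark preceding the theorem apply to it.

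Concretely, I would argue as follows. If $G$ is strongly simple, then $\ker\Lambda=G$ forces $\Lambda$ to be trivial, so nothing from $a)$, $b)$, $c)$ is needed. If $G$ has a proper normal $n$-ary subgroup $H$ contained in $\ker\Lambda$ — in particular, if $\ker\Lambda$ is itself proper, one takes $H=\ker\Lambda$ — then by the Proposition establishing a bijection between ordinary representations of $Ret_H(G/H)$ and representations of $G$ with $H\subseteq\ker\Lambda$, together with the fact that $(G/H,f_H)$ is of reduced type and hence carries exactly the representations of the binary group $Ret_H(G/H)$, the description of $\Lambda$ is reduced to enumerating the normal $n$-ary subgroups of $G$, which is problem $b)$, and to ordinary representation theory, which is problem $c)$. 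The only remaining possibility is that $G$ is simple but not strongly simple; then, by the Proposition on such groups, $G$ is $b$-derived from an abelian group — this is problem $a)$ — or $(G,f)=der(G,\cdot)$ is reducible to a non-abelian binary group, and in that case the preceding Remark identifies the representations of $(G,f)$ with those of $(G,\cdot)$, i.e.\ problem $c)$ again.

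To close the argument I would verify the two points on which exhaustiveness rests: first, that when $\ker\Lambda$ is neither all of $G$ nor contained in a proper normal subgroup, it must be a one-element set — reusing the observation from the proof of the Proposition on simple $n$-ary groups that a singleton normal $n$-ary subgroup is a central element, so that $G$ is of $b$-derived type — and second, that the passage from general arity to the \emph{ternary} statements in $a)$ and $b)$ is legitimate, which I would get from the Hossz\'u--Gluskin description (Theorem \ref{T-HG}) and the covering-group machinery of Section~4. I expect the main obstacle to be exactly the bookkeeping that makes the word \emph{reduces} literal: the bijection Proposition only recovers the representations killing a fixed proper normal subgroup, so one must either iterate it with a termination count on $|G|$, or restrict attention to irreducible representations and rule out a representation that is faithful on a proper normal subgroup in the simple case; making this compatible with the arity reduction in $a)$ and $b)$ is the delicate step, whereas the $G/H$-step itself terminates at once because $G/H$ is already of reduced type.
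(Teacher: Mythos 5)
Your proposal is correct and follows essentially the same route as the paper, which offers this theorem only as a summary of the section: the intended argument is exactly the four-case analysis of the preceding Remark (via $\ker\Lambda$ being a normal $n$-ary subgroup) combined with the quotient construction, the bijection proposition for $Ret_H(G/H)$, and the proposition on simple but not strongly simple groups. The two caveats you raise at the end --- that making the reduction literal requires iterating (or restricting to) the quotient step, and that the word \emph{ternary} in parts $a)$ and $b)$ is not justified for general $n$ (the paper presumably means $n$-ary) --- are precisely the points the paper leaves implicit and unproved.
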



\end{document}